\documentclass[12pt]{amsart}
\usepackage{amssymb}
\usepackage[all]{xy}
\usepackage{hyperref}


\usepackage[english, french]{babel}

\setlength{\textwidth}{6.5 in}
\setlength{\oddsidemargin}{0 in}
\setlength{\evensidemargin}{0 in}
\setlength{\topmargin}{-0.25in}
\setlength{\textheight}{9in}
\setlength{\marginparwidth}{.8in}

\newcommand{\SL}{\mathrm{SL}_2}
\newcommand{\SLq}{\mathrm{SL}_2^q}
\newcommand{\Uq}{\mathrm U_q(\mathfrak{sl}_2)}
\newcommand{\QB}[3]{\binom{#1}{#2}_{\kern -2pt#3}}
\newcommand{\QI}[2]{( #1 )_{#2}}
\newcommand{\QQI}[2]{[ #1 ]_{#2}}
\newcommand{\op}{\mathrm{op}}
\newcommand{\Tr}{\mathrm{Trace\,}}
\newcommand{\A}{\mathcal A}
\newcommand{\SSS}{\mathcal S^q}
\newcommand{\C}{\mathbb C}
\newcommand{\Z}{\mathbb Z}
\newcommand{\End}{\mathrm{End}}

\newcommand{\Aut}{\mathrm{Aut}}
\newcommand{\Id}{\mathrm{Id}}

\renewcommand{\leq}{\leqslant}
\renewcommand{\geq}{\geqslant}
\renewcommand{\phi}{\varphi}
\renewcommand{\epsilon}{\varepsilon}

\newtheorem{thm}{Theorem}
\newtheorem{lem}[thm]{Lemma}
\newtheorem{prop}[thm]{Proposition}
\newtheorem{cor}[thm]{Corollary}

\theoremstyle{remark}

\title[Miraculous cancellations for quantum $\SL$]{Miraculous cancellations\\
for  quantum $\SL$}

\author{Francis Bonahon}
\address{Department of Mathematics, University of Southern California, Los Angeles CA 90089-2532, U.S.A.}
\email{fbonahon@math.usc.edu}

\dedicatory{\`A Jean-Pierre Otal,\\
en l'honneur de  ses $3\times4\times5$ ans}

\thanks{This work was partially supported by the grants DMS-1406559 and DMS-1711297 from the U.S. National Science Foundation.} 

\begin{document}
\maketitle

\begin{footnotesize}\leftskip .5in \rightskip .5in
\selectlanguage{french}
 \noindent \textsc{R\'esum\'e fran\c cais.} Des travaux pr\'ec\'edents de Helen Wong et de l'auteur ont mis en \'evidence, quand le param\`etre quantique  $q= \mathrm e^{2\pi\mathrm i \hbar}$ est une racine de l'unit\'e, des \og annulations miraculeuses\fg\ pour l'application de trace quantique qui relie l'alg\`ebre d'\'echeveaux du crochet de Kauffman \`a l'espace de Teichm\"uller quantique d'une surface. L'article ci-dessous fournit une interpr\'etation plus conceptuelle de ce ph\'enom\`ene, en termes de repr\'esentations du groupe quantique $\mathrm U_q(\mathfrak{sl}_2)$ et de son alg\`ebre de Hopf duale  $\mathrm{SL}_2^q$.

 \medskip
 \selectlanguage{english}
 \noindent \textsc{English abstract.} In earlier work, Helen Wong and the author discovered certain ``miraculous cancellations'' for the quantum trace map connecting the Kauffman bracket skein algebra of a surface to its quantum Teichm\"uller space, occurring when the quantum parameter $q= \mathrm e^{2\pi\mathrm i \hbar}$ is a root of unity. The current paper is devoted to giving a more representation theoretic interpretation of this phenomenon, in terms of the quantum group $\mathrm U_q(\mathfrak{sl}_2)$ and its dual Hopf algebra $\mathrm{SL}_2^q$.

\end{footnotesize}

\bigskip
 \selectlanguage{english}

The equation 
\begin{equation}
\label{eqn:Frobenius}
(X+Y)^n=X^n + Y^n
\end{equation}
is (unfortunately) very familiar to some of our students, who find it convenient to ``simplify'' computations. However, it is also well-known that this relation does hold in some cases, for instance in a ring of characteristic $n$ with $n$ prime, or when the variables $X$ and $Y$ satisfy the $q$-commutativity relation that $YX=qXY$ with $q\in \C$ a primitive $n$--root of unity; see~\S \ref{sect:FrobeniusEqn}. 

The structure of Equation~(\ref{eqn:Frobenius}) can be described by considering the two-variable polynomial $P(X,Y)=X+Y$. Then (\ref{eqn:Frobenius}) states that the polynomial $P(X,Y)^n$, obtained by taking the $n$--th power of $P(X,Y)$, coincides with the polynomial $P(X^n, Y^n)$ obtained by replacing the variables $X$, $Y$ with their powers $X^n$, $Y^n$, respectively. 

Helen Wong and the author discovered similar identities in their study of the Kauffman bracket skein algebra of a surface \cite{BonWon2}. These relations involved a 2--dimensional version of the operation of ``taking the $n$--th power'', through the Chebyshev polynomial $T_n(t) \in \Z[t]$ defined by the property that
$$
\Tr A^n = T_n(\Tr A)
$$
for every 2-by-2 matrix $A\in \SL(\C)$ with determinant 1. A typical consequence of the miraculous cancellations discovered in \cite{BonWon2} is that, when $YX=qXY$ and $q$ is a primitive $n$--root of unity, 
\begin{equation}
\label{eqn:CancellationFirstExample}
T_n(X + Y + X^{-1}) = X^n + Y^n + X^{-n},
\end{equation}
which fits the pattern $T_n \bigl( P(X,Y) \bigr) = P(X^n, Y^n)$ for the polynomial $P(X,Y)= X + XY + X^{-1}$. The arguments of \cite{BonWon2} provide many examples of such polynomials, involving several $q$--commuting variables. 

In \cite{BonWon2} these ``Chebyshev cancellations'' were used to connect, when $q$ is a root of unity, irreducible representations of the Kauffman bracket skein algebra $\SSS(S)$ of a surface $S$ to group homomorphisms $\pi_1(S) \to \SL(\C)$. The skein algebra $\SSS(S)$ is a purely topological object whose elements are represented by framed links in the thickened surface $S\times [0,1]$. It draws its origin from Witten's interpretation \cite{Wi, RT1, RT2} of the Jones polynomial knot invariant within the framework of a topological quantum field theory, and as a consequence it is closely connected to the quantum group $\Uq$. The arguments of \cite{BonWon2} were often developed by trial and error. The purpose of the current article is to put these constructions into a more conceptual framework, where the connection with $\Uq$ and $\SL(\C)$ appears more clearly.  Another goal is to emphasize the representation theoretic nature of these phenomena, with the long term objective of generalizing them to quantum knot invariants and skein algebras based on other quantum groups $\mathrm U_q(\mathfrak g)$, such as the $\mathrm U_q(\mathfrak {sl}_n)$--based HOMFLY polynomial and skein algebra. 

In addition to the fact that quantum groups are still an acquired taste for many mathematicians, including the author, the connection between $\Uq$ and $\SL(\C)$ is more intuitive if we replace $\Uq$ with its dual Hopf algebra $\SLq$, in the sense of \cite{RTF, Man1, Man2, Tak1, Tak2}. This will enable us to express our constructions solely in terms of 2-by-2 matrices with coefficients in an arbitrary noncommutative algebra $\A$; in \cite{BonWon2}, the algebra $\A$ was the quantum Teichm\"uller space of the surface. This point of view is sufficiently close to $\SL(\C)$ that it should be relatively intuitive for those mathematicians who have a long track record in hyperbolic geometry, since $\mathrm{PSL}_2(\C)$ is the isometry group of the hyperbolic space $\mathbb H^3$. This category includes the author and Jean-Pierre Otal, and it is a pleasure to dedicate this article to him as an acknowledgement of the great influence that he had on the author's work, either through their joint articles \cite{BoO0, BoO1, BoO2, BoO3} or through many informal conversations. 

We now state the main result of this article. 

\begin{thm}
\label{thm:Intro}
Let $A_1$, $A_2$, \dots, $A_k$ be $2$-by-$2$ matrices with coefficients in an algebra $\A$ over $\C$, such that:
\begin{enumerate}
\item each $A_i$ is triangular of the form 
$\bigl(
\begin{smallmatrix}
a_i&b_i \\ 0&a_i^{-1}
\end{smallmatrix}
\bigr)$
or
$\bigl(
\begin{smallmatrix}
a_i&0\\ b_i&a_i^{-1}
\end{smallmatrix}
\bigr)$
with $b_ia_i = q a_i b_i$ for some nonzero number $q\in \C-\{0\}$;
\item $a_i$ and $b_i$ commute with $a_j$ and $b_j$ whenever $i \neq j$. 

\end{enumerate}

Then, if $q^2$ is a primitive $n$--root of unity, 
$$
T_n( \Tr A_1A_2\dots A_k) = \Tr A_1^{(n)}A_2^{(n)}\dots A_k^{(n)}
$$
where each $A_i^{(n)}$ is obtained from $A_i$ by replacing $a_i$ and $b_i$ with their powers $a_i^n$ and $b_i^n$
\end{thm}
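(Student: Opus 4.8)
The plan is to recast the hypotheses in the language of the dual Hopf algebra $\SLq$, and then to isolate two independent ingredients: a purely structural reduction coming from the bialgebra (FRT) structure, and the root-of-unity cancellations of \S\ref{sect:FrobeniusEqn}. First I would note that each $A_i$ is a triangular quantum $\SL_2$ matrix: condition (1) is exactly the defining relation of $\SLq$ in the triangular case, with the quantum determinant reducing to $a_i\cdot a_i^{-1}=1$ because one off-diagonal entry vanishes. Condition (2) says that $A_1,\dots,A_k$ are \emph{entrywise-commuting} $\SLq$-points, so the fundamental bialgebra property of $\SLq$ — that the defining relations are preserved when one multiplies two quantum matrices whose entries commute — shows that $M=A_1A_2\cdots A_k=\bigl(\begin{smallmatrix}\alpha&\beta\\\gamma&\delta\end{smallmatrix}\bigr)$ is again a quantum $\SL_2$ matrix for the same $q$. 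In particular its quantum determinant is $1$, and eliminating $\beta\gamma$ between the commutation relation $\alpha\delta-\delta\alpha=(q-q^{-1})\beta\gamma$ and the determinant relation shows that the two diagonal entries satisfy a single relation of the form $\delta\alpha=q^{-2}\alpha\delta+(1-q^{-2})$ (up to the conventions fixing $\SLq$). This reduces the theorem to the single-matrix statement $T_n(\Tr M)=\alpha^n+\delta^n$.

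Second, using that $q^2$ is a primitive $n$--root of unity, I would identify the right-hand side $\Tr A_1^{(n)}\cdots A_k^{(n)}$ with $\alpha^n+\delta^n$. Conceptually this is the statement that the quantum Frobenius at a root of unity is a bialgebra map sending each generator to its $n$-th power; being compatible with the coproduct, it commutes with the multiplication of entrywise-commuting points, so the Frobenius power $M^{(n)}$ (entries replaced by their $n$-th powers) equals $A_1^{(n)}\cdots A_k^{(n)}$. The concrete mechanism is the Frobenius identity $(x+y)^n=x^n+y^n$ valid when $yx=q^2xy$, from \S\ref{sect:FrobeniusEqn}: each diagonal entry of $M$ is a sum, over lattice paths through the triangular patterns, of monomials in the $a_i,b_i$, these monomials $q^2$-commute pairwise (as in the computation of $\Tr A_1A_2$), and the identity applies termwise to give $(A_1^{(n)}\cdots A_k^{(n)})_{11}=\alpha^n$ and $(A_1^{(n)}\cdots A_k^{(n)})_{22}=\delta^n$.

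It then remains to prove the single-matrix Chebyshev identity $T_n(\alpha+\delta)=\alpha^n+\delta^n$ from the relation $\delta\alpha=q^{-2}\alpha\delta+(1-q^{-2})$ alone, together with $q^2$ being a primitive $n$--root of unity; this is the step I expect to be the main obstacle. Its representation-theoretic meaning is that $T_n$ is the character of the Frobenius twist, so that $T_n(\Tr M)$ computes the same quantity as the classical trace $\alpha^n+\delta^n$ of the Frobenius image. To prove it directly, one route is to write $\alpha=X+Y$, $\delta=X^{-1}$ with $YX=q^2XY$, as in the examples, turning the claim into $T_n(X+Y+X^{-1})=X^n+Y^n+X^{-n}$, which is precisely \eqref{eqn:CancellationFirstExample} with $q$ replaced by $q^2$. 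The difficulty is that a general quantum matrix need not admit such a normal form, so I would instead argue intrinsically from the commutation relation — either through a quantum Cayley–Hamilton equation for $M$, which lets one expand $T_n(\Tr M)$ and match powers after the root-of-unity cancellations collapse the intermediate terms, or by introducing a ``quantum eigenvalue'' $\Lambda$ with $\Tr M=\Lambda+\Lambda^{-1}$ in a suitable extension, so that $T_n(\Tr M)=\Lambda^n+\Lambda^{-n}$, and then identifying $\Lambda^n+\Lambda^{-n}$ with $\alpha^n+\delta^n$. Combining this identity with the two reductions above yields $T_n(\Tr A_1\cdots A_k)=\alpha^n+\delta^n=\Tr A_1^{(n)}\cdots A_k^{(n)}$, as desired.
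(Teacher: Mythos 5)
Your strategy --- pass to the single quantum matrix $M=A_1A_2\cdots A_k=\bigl(\begin{smallmatrix}\alpha&\beta\\ \gamma&\delta\end{smallmatrix}\bigr)\in\SLq(\A)$ and prove $T_n(\Tr M)=\alpha^n+\delta^n$ --- is genuinely different from the paper's, but as written it has two gaps, and the second one is essentially the whole difficulty of the theorem.

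First, the identification of the right-hand side. You assert that the monomials appearing in a diagonal entry of $M$ ``$q^2$-commute pairwise'', so that the Frobenius identity applies termwise. This is false in general. Take $k=4$ with the pattern upper, lower, upper, lower. Then $M_{11}$ contains both $m=a_1a_2a_3a_4$ (the path staying in state $1$) and $m'=b_1b_2b_3b_4$ (the path $1\to2\to1\to2\to1$), and since entries with distinct indices commute, $m'm=(b_1a_1)(b_2a_2)(b_3a_3)(b_4a_4)=q^4\,mm'$: these two monomials $q^4$-commute, not $q^2$-commute, so the relevant quantum multinomial coefficients do not vanish for the reason you give. The conclusion $(M_{11})^n=(M^{(n)})_{11}$ can probably still be rescued --- e.g.\ by induction on $k$, using the relation $\beta\alpha=q\alpha\beta$ valid for any $\SLq(\A)$-point to check that the two terms of $(M'A_k)_{11}=M'_{11}a_k+M'_{12}b_k$ do $q^2$-commute --- but that is a different argument, it must be run for all four entries of $M$ simultaneously, and it is not in your write-up. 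Second, and more seriously, the single-matrix identity $T_n(\alpha+\delta)=\alpha^n+\delta^n$ for a general (non-triangular) $\SLq(\A)$-point is exactly the hard part, and you leave it unproven: the two routes you sketch (a quantum Cayley--Hamilton expansion, or a ``quantum eigenvalue'' $\Lambda$ with $\Tr M=\Lambda+\Lambda^{-1}$) are named but not carried out, and the second is particularly delicate in a noncommutative algebra where no such $\Lambda$ need exist and where identifying $\Lambda^n+\Lambda^{-n}$ with $\alpha^n+\delta^n$ is precisely the cancellation to be established. (The identity does check out by hand for $n=2,3$ from the relation $\delta\alpha=q^2\alpha\delta+1-q^2$, so the statement is plausible, but a proof for all $n$ is the theorem, not a lemma one may quote.)

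For contrast, the paper never forms the entries of the product matrix. It proves $S_n(\Tr A)=\Tr\rho_n(A)$ by transporting the Clebsch--Gordan decomposition from $\Uq$ to $\SLq(\A)$ (Corollary~\ref{cor:Traces&ChebyshevSecondKind}), writes $T_n=S_n-S_{n-2}$, and then uses the triangularity of each individual factor $A_i$ in an essential way: the matrix of $\rho_n(A_i)$ in the monomial basis is itself triangular with quantum-binomial entries, and the sum over index sequences computing $\Tr\rho_n(A_1\cdots A_k)-\Tr\rho_{n-2}(A_1\cdots A_k)$ collapses term by term (Lemmas~\ref{lem:NoIndex0orN} and~\ref{lem:SomeIndices0orN}). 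Your reduction to $M$ discards the triangularity hypothesis at the outset, which is why it forces you to confront the general quantum-matrix identity; if you can supply a complete proof of that identity, your route would actually yield a stronger statement than Theorem~\ref{thm:Intro}, but at present it is a reduction of the theorem to an unproved claim of comparable depth.
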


This statement is easier to understand if we illustrate it by an example. Consider the product of five triangular matrices
$$
A= 
\Big(
\begin{smallmatrix}
 a_1&b_1\\0& a_1^{-1}
\end{smallmatrix}
\Big)
\Big(
\begin{smallmatrix}
 a_2&b_2\\0& a_2^{-1}
\end{smallmatrix}
\Big)
\Big(
\begin{smallmatrix}
 a_3 &0\\b_3& a_3^{-1}
\end{smallmatrix}
\Big)
\Big(
\begin{smallmatrix}
 a_4&b_4\\0& a_4^{-1}
\end{smallmatrix}
\Big)
\Big(
\begin{smallmatrix}
 a_5&0\\b_5& a_5^{-1}
\end{smallmatrix}
\Big)
$$
where $b_ia_i = q a_i b_i$, and $a_i$ and $b_i$ commute with $a_j$ and $b_j$ whenever $i \neq j$.

Computing the product and taking the trace straightforwardly gives
\begin{align*}
\Tr A &= a_1a_2a_3a_4a_5 
+a_1 a_2a_3b_4b_5
+a_1 b_2b_3a_4a_5
+a_1 b_2b_3b_4b_5\\
&\qquad +a_1 b_2a_3^{-1}a_4^{-1}b_5
+b_1 a_2^{-1}b_3a_4a_5
+b_1 a_2^{-1}b_3b_4b_5\\
&\qquad +b_1 a_2^{-1}a_3^{-1}a_4^{-1}b_5
+a_1^{-1} a_2^{-1}b_3b_4a_5^{-1}
+a_1^{-1} a_2^{-1}a_3^{-1}a_4^{-1}a_5^{-1}.
\end{align*}

Since $\Tr A$ has 10 terms and the Chebyshev polynomial $T_n(t)$ has degree $n$, one would expect $T_n(\Tr A)$ to have about $10^n$ terms. However, when $q^2$ is a primitive $n$--root of unity, many cancellations occur and only 10 terms remain. In fact
\begin{align*}
T_n(\Tr A) &= a_1^n a_2^n a_3^n a_4^n a_5^n  
+a_1^n  a_2^n a_3^n b_4^n b_5^n 
+a_1^n  b_2^n b_3^n a_4^n a_5^n 
+a_1^n  b_2^n b_3^n b_4^n b_5^n \\
&\qquad +a_1^n  b_2^n a_3^{-n}a_4^{-n}b_5^n 
+b_1^n  a_2^{-n}b_3^n a_4^n a_5^n 
+b_1^n  a_2^{-n}b_3^n b_4^n b_5^n \\
&\qquad +b_1^n  a_2^{-n}a_3^{-n}a_4^{-n}b_5^n 
+a_1^{-n} a_2^{-n}b_3^n b_4^n a_5^{-n}
+a_1^{-n} a_2^{-n}a_3^{-n}a_4^{-n}a_5^{-n}\\
&= \Tr A^{(n)}
\end{align*}
where
$$
A^{(n)}= 
\Big(
\begin{smallmatrix}
 a_1^n&b_1^n\\0& a_1^{-n}
\end{smallmatrix}
\Big)
\Big(
\begin{smallmatrix}
 a_2^n&b_2^n\\0& a_2^{-n}
\end{smallmatrix}
\Big)
\Big(
\begin{smallmatrix}
 a_3^n &0\\b_3^n& a_3^{-n}
\end{smallmatrix}
\Big)
\Big(
\begin{smallmatrix}
 a_4^n&b_4^n\\0& a_4^{-n}
\end{smallmatrix}
\Big)
\Big(
\begin{smallmatrix}
 a_5^n&0\\b_5^n& a_5^{-n}
\end{smallmatrix}
\Big)
$$
is obtained from $A$ by replacing each $a_i$, $b_i$ with $a_i^n$, $b_i^n$.

When $q$ is transcendental,  there are only very few  cancellations and Proposition~\ref{prop:TnExactNumberMonomials}  shows that $T_n(\Tr A)$ is a sum of exactly $(5+\sqrt{24})^n + (5-\sqrt{24})^n >9.89^n$ monomials.  This explicit count is based on a positivity result (Proposition~\ref{prop:TnPositiveSum}) which may be of independent interest. 

Similarly, the example of Equation~(\ref{eqn:CancellationFirstExample}) is provided by applying Theorem~\ref{thm:Intro} to the matrix $A= 
\big(
\begin{smallmatrix}
 a_1&b_1\\0& a_1^{-1}
\end{smallmatrix}
\big)
\big(
\begin{smallmatrix}
 a_2&0\\b_2& a_2^{-1}
\end{smallmatrix}
\big)
$, setting $X=a_1a_2$ and $Y=b_1b_2$, and replacing $q$ with $\sqrt q$. 

The proof of Theorem~\ref{thm:Intro} essentially has two parts. The first step is representation theoretic, and connects $T_n(\Tr A)$ to the action of the matrix $A$ on the space $\A[X,Y]^q$ of polynomials in $q$--commuting variables $X$ and $Y$ and with coefficients in the algebra $\A$. This is a relatively easy adaptation to our context of a deep but classical result in the representation theory of the quantum group $\Uq$, the Clebsch-Gordan Decomposition. The second step is a simple computation of traces for this action of $A$ on  $\A[X,Y]^q$, which is much simpler than the original arguments of \cite{BonWon2}.

Among the hypotheses of Theorem~\ref{thm:Intro}, some are more natural than others. The $q$--com\-mutativity relations $b_ia_i = q a_i b_i$ are essentially mandated by the connection of the objects considered to the quantum group $\Uq$ and its dual Hopf algebra $\SLq$. Similarly, the requirement that the matrices $A_i$ be triangular is deeply tied to the structure of the Lie group $\SL(\C)$ and the quantum group $\Uq$ (and their Borel subgroups/subalgebras). The commutativity hypothesis that $a_i$ and $b_i$ commute with $a_j$ and $b_j$ whenever $i \neq j$ is less critical, and was introduced here to define $ \Tr A_1A_2\dots A_k \in \A$ (and the product matrix $A_1A_2\dots A_k\in \SLq(\A)$) in a straightforward way. In fact, it is possible to define such a trace without these commutativity properties, but this requires using the cobraiding of the Hopf algebra $\SLq$ as well as additional data that is reminiscent of the original topological context. This was implicitly done in \cite{BonWon1} for the Kauffman bracket skein algebra of a surface, but a quick comparison between the formulas of \cite[Lem.~21]{BonWon1} and \cite[Cor.~VIII.7.2]{Kas} should make it clear that these arguments can  be expanded to a more representation theoretic framework. The cancellations of Theorem~\ref{thm:Intro} then extend to this generalized setup, as in \cite{BonWon2}.

\section{The equation $(X+Y)^n=X^n + Y^n$}
\label{sect:FrobeniusEqn}

In spite of the first sentence of this article, most of our students do know the \emph{Binomial Formula}, which says that
\begin{align*}
(X+Y)^n &= X^n + \binom{n}{1}X^{n-1}Y+ + \binom{n}{2}X^{n-2}Y^2 + \dots + \binom{n}{n-1}XY^{n-1} +Y^n\\
&= \sum_{k=0}^n \binom{n}{k} X^{n-k}Y^k
\end{align*}
where $\binom{n}{k}$ is the \emph{binomial coefficient}
$$
\binom{n}{k} = \frac{n(n-1)(n-2) \dots (n-k+2)(n-k+1)}
{k(k-1)(k-2) \dots 2\,1}.
$$

If we are working in a ring $R$ with characteristic $n$ and if $n$ is prime (which in particular occurs when $R$ is a field), then $n=0$ in $R$ while $k(k-1)(k-2) \dots 2\,1\neq 0$ for every $k<n$. It follows that $\binom {n}{k}=0$ whenever $0<k<n$, so that the Frobenius relation
\newcounter{holdequation}
\setcounter{holdequation}{\theequation}
\begin{equation}
\setcounter{equation}{1}
(X+Y)^n = X^n + Y^n
\end{equation}
\setcounter{equation}{\theholdequation}
holds in this case.

Note that the hypothesis that the characteristic $n$ is prime is really necessary. For instance, in the ring $\Z/4$ of characteristic 4, 
$$
(X+Y)^4 = X^4 + 6X^2Y^2 + Y^4 \neq X^4 + Y^4. 
$$

A less well-known occurrence of Equation~(\ref{eqn:Frobenius})  involves variables $X$ and $Y$ that $q$--commute, in the sense that $YX = qXY$ for some $q\in \C$. The \emph{Quantum Binomial Formula} (see for instance \cite[\S IV.2]{Kas}) then states that
\begin{equation}
\label{eqn:QuantumBinomialFormula}
\begin{split}
(X+Y)^n &= X^n + \QB{n}{1}{q}X^{n-1}Y+ + \QB{n}{2}{q}X^{n-2}Y^2 + \dots + \QB{n}{n-1}{q} XY^{n-1} +Y^n\\
&= \sum_{k=0}^n \QB{n}{k}{q} X^{n-k}Y^k
\end{split}
\end{equation}
where $\QB{n}{k}{q}$ is the \emph{quantum binomial coefficient}
$$
\QB{n}{k}{q} = \frac{\QI{n}{q}\QI{n-1}{q}\QI{n-2}{q} \dots \QI{n-k+2}{q}\QI{n-k+1}{q}}
{\QI{k}{q}\QI{k-1}{q}\QI{k-2}{q} \dots \QI{2}{q}\QI{1}{q}}
$$
defined by the \emph{quantum integers}
$$
\QI{j}q = \frac{q^j-1}{q-1} = 1 +q+q^2 + \dots +q^{j-1}.
$$

We state the following property as a lemma, as we will frequently need to refer to it. 
\begin{lem}
\label{lem:QuantumBinomialZero}
If $q$ is a primitive $n$--root of unity, in the sense that $q^n=1$ and $q^k\neq 1$ whenever $0<k<n$, the quantum  binomial coefficient $\QB{n}{k}{q}$ is equal to $0$ for every $k$ with $0<k<n$. 
\end{lem}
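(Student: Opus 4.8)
The plan is to read off the vanishing directly from the product formula defining $\QB{n}{k}{q}$, by locating a single zero factor in the numerator while checking that no factor in the denominator vanishes. I would first note that the statement is non-vacuous only when $n \geq 2$, so the hypothesis that $q$ is a \emph{primitive} $n$--root of unity forces $q \neq 1$; in particular each quantum integer $\QI{j}q = (q^j-1)/(q-1)$ is well-defined.

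The key observation is that $\QI{n}q = (q^n-1)/(q-1) = 0$, since $q^n = 1$ by hypothesis while $q-1 \neq 0$. Because $k \geq 1$, the numerator $\QI{n}q\QI{n-1}q\dots\QI{n-k+1}q$ of $\QB{n}{k}{q}$ contains the factor $\QI{n}q$, and is therefore equal to $0$. This is really the whole of the argument; everything else is bookkeeping.

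To conclude that the quotient genuinely equals $0$ rather than being an indeterminate form, I would check that the denominator $\QI{k}q\QI{k-1}q\dots\QI{1}q$ is nonzero. Each of its factors has the form $\QI{j}q$ with $1 \leq j \leq k$, and the hypothesis $k < n$ gives $0 < j < n$. Since $q$ is a \emph{primitive} $n$--root of unity, $q^j \neq 1$ for such $j$, so $\QI{j}q = (q^j-1)/(q-1) \neq 0$. Hence the denominator is a product of nonzero numbers, the quotient $\QB{n}{k}{q}$ is the well-defined number $0$, and the lemma follows.

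There is essentially no obstacle here: the entire content is the identity $\QI{n}q = 0$ combined with the separation of the vanishing numerator factor from the nonvanishing denominator factors. The only point requiring a moment's care is the well-definedness of the quotient, which is exactly what the primitivity of $q$ secures in the previous paragraph.
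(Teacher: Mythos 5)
Your proof is correct and is essentially identical to the paper's: both hinge on the observation that $\QI{n}{q}=0$ while $\QI{j}{q}\neq 0$ for $0<j<n$, so the numerator of $\QB{n}{k}{q}$ vanishes and the denominator does not. Your version just spells out the bookkeeping (well-definedness of the quotient) that the paper leaves implicit.
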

\begin{proof}
Since $q$ is a primitive $n$--root of unity, $\QI{n}{q}=\frac{q^n-1}{q-1}=0$ while $\QI{k}{q}=\frac{q^k-1}{q-1}\neq0$ whenever $0<k<n$. The result follows. 
\end{proof}

\begin{cor}
If $YX=qXY$ with $q\in \C$ a primitive $n$--root of unity, the Frobenius relation
\setcounter{holdequation}{\theequation}
\begin{equation}
\setcounter{equation}{1}
(X+Y)^n = X^n + Y^n
\end{equation}
\setcounter{equation}{\theholdequation}
holds. \qed
\end{cor}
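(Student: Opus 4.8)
The plan is to deduce the corollary directly from the Quantum Binomial Formula~(\ref{eqn:QuantumBinomialFormula}) together with Lemma~\ref{lem:QuantumBinomialZero}, exactly as the classical Frobenius relation was deduced from the ordinary Binomial Formula in the characteristic-$n$ case. The point is that the $q$--commutativity hypothesis $YX=qXY$ is precisely what licenses the use of~(\ref{eqn:QuantumBinomialFormula}), so no independent combinatorial work is required.

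Concretely, I would first invoke the Quantum Binomial Formula, which under the hypothesis $YX=qXY$ gives
$$
(X+Y)^n = \sum_{k=0}^n \QB{n}{k}{q} X^{n-k}Y^k.
$$
I would then apply Lemma~\ref{lem:QuantumBinomialZero}: since $q$ is a primitive $n$--root of unity, every quantum binomial coefficient $\QB{n}{k}{q}$ with $0<k<n$ vanishes. Hence all the intermediate terms in the sum drop out, leaving only the extreme terms $k=0$ and $k=n$, namely
$$
(X+Y)^n = \QB{n}{0}{q}X^n + \QB{n}{n}{q}Y^n = X^n + Y^n,
$$
where the outer coefficients are both equal to $1$ by the usual convention (each is an empty product over an empty quotient, or directly $\QI{n}{q}\dots\QI{1}{q}/\QI{n}{q}\dots\QI{1}{q}=1$). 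This establishes the Frobenius relation.

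There is essentially no obstacle here, as the corollary is an immediate consequence of results already in hand; the only point that warrants a moment's care is confirming that the boundary coefficients $\QB{n}{0}{q}$ and $\QB{n}{n}{q}$ equal $1$, so that the surviving terms are exactly $X^n$ and $Y^n$ with unit coefficients. Since Lemma~\ref{lem:QuantumBinomialZero} addresses only the range $0<k<n$, it is worth noting explicitly that the two endpoints are untouched. With that verification the proof is complete, which is why the statement is presented with the terse \qed supplied in the excerpt rather than a displayed argument.
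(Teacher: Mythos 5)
Your proof is correct and is exactly the argument the paper intends: the corollary is stated with a bare \qed precisely because it follows immediately from the Quantum Binomial Formula~(\ref{eqn:QuantumBinomialFormula}) together with Lemma~\ref{lem:QuantumBinomialZero}, with only the boundary coefficients $\QB{n}{0}{q}=\QB{n}{n}{q}=1$ left to observe. No differences from the paper's (implicit) proof.
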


As in the characteristic $n$ case, it is necessary that $q$ be a \emph{primitive} $n$--root of unity. For instance, when  $YX=-XY$, $q=-1$ is a (non-primitive) 4--root of unity and $(X+Y)^4 = X^4 + 2X^2Y^2 + Y^4 \neq X^4 + Y^4$. 

We will now discuss generalizations of Equation~(\ref{eqn:Frobenius}) arising from properties of the quantum group $\Uq$. As indicated in the introduction, we will express these properties in terms of 2-by-2 matrices with coefficients in an algebra $\A$. 

Incidentally, all algebras considered in this article will be over $\C$. Other fields could be used, but the need for primitive $n$--roots of unity  make this convention more natural. 

We begin with the classical case of the algebra $\A=\C$, and of the Lie group $\SL(\C)$.

\section{The classical action of $\SL(\C)$ on $\C[X,Y]$}
\label{sect:SL2}

The \emph{special linear group} of order 2 is the group
$$
\SL(\C) = \big\{
\big(
\begin{smallmatrix}
 a&b\\c&d
\end{smallmatrix}
\big); a, b,c,d \in \C, ad-bc=1
\big\}
$$
of $2$-by-$ 2$ matrices with determinant 1. It has a left action on the plane $\C^2$, and therefore a right action by precomposition on the algebra 
\begin{align*}
\C[X,Y] &= \{ \text{polynomial functions on } \C^2\}\\
&= \Big\{\text{polynomials }
\sum_{i=0}^m \sum_{j=0}^n a_{ij} X^i Y^j; a_{ij} \in \C \Big\}
\end{align*}
of polynomials in the variables $X$ and $Y$. More precisely, the action of $\big(
\begin{smallmatrix}
 a&b\\c&d
\end{smallmatrix}
\big)\in \SL(\C)$ on $\C[X,Y]$ is such that 
\begin{equation}\label{eqn:ActionPlane}
P(X,Y)\big(
\begin{smallmatrix}
 a&b\\c&d
\end{smallmatrix}
\big) = P(aX+bY, cX + dY)
\end{equation}
for every polynomial $P(X,Y) \in \C[X,Y]$. 

This defines a map
$$
\rho \colon \SL(\C) \to \End_\C \big( \C[X,Y] \big)
$$
from $ \SL(\C) $ to the algebra of $\C$--linear maps $\C[X,Y] \to \C[X,Y]$. 

We collect a few elementary properties in the following lemma. 

\begin{lem}
$ $\pushQED{\qed}
\begin{enumerate}
\item For every $\big(
\begin{smallmatrix}
 a&b\\c&d
\end{smallmatrix}
\big) \in \SL(\C)$, $\rho\big(
\begin{smallmatrix}
 a&b\\c&d
\end{smallmatrix}
\big) \in \End_\C \bigl( \C[X,Y] \bigr)$ is also an algebra endomorphism of $\C[X,Y]$.

\item The map  $\rho$ is valued in the group $\Aut_\C \big( \C[X,Y] \big)$ of linear automorphisms of $\C[X,Y]$, and induces a group antihomomorphism
$
\rho \colon \SL(\C) \to \Aut_\C \big( \C[X,Y] \big)
$, in the sense that
$$
\rho\left(
\begin{pmatrix}
 a&b\\c&d
\end{pmatrix}
\begin{pmatrix}
 a'&b'\\c'&d'
\end{pmatrix}
\right)
=
\rho
\begin{pmatrix}
 a'&b'\\c'&d'
\end{pmatrix}
\circ
\rho
\begin{pmatrix}
 a&b\\c&d
\end{pmatrix}
$$
for every 
$\big(
\begin{smallmatrix}
 a&b\\c&d
\end{smallmatrix}
\big)$, 
$\big(
\begin{smallmatrix}
 a'&b'\\c'&d'
\end{smallmatrix}
\big)\in \SL(\C)$.

\item If $\C[X,Y]_n = \bigl\{ \sum_{i+j=n} a_{ij} X^iY^j; a_{ij} \in \C \bigr\}\cong \C^{n+1}$ denotes the vector space of homogeneous polynomials of degree $n$, the representation $\rho$ restricts to a finite-dimensional representation 
\begin{equation*}
\rho_n \colon \SL(\C) \to \Aut_\C \big( \C[X,Y] _n\big) \cong \Aut_\C(\C^{n+1}). \qedhere
\end{equation*}
\end{enumerate}
\end{lem}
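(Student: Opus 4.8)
The plan is to derive all three statements from one observation: the action~(\ref{eqn:ActionPlane}) is nothing more than precomposition with the linear change of variables determined by the matrix. To each $M=\big(\begin{smallmatrix} a&b\\c&d\end{smallmatrix}\big)\in\SL(\C)$ I associate the linear map $\phi_M\colon\C^2\to\C^2$ sending $(X,Y)$ to $(aX+bY,\,cX+dY)$, so that $\phi_M$ is just $M$ acting on column vectors and~(\ref{eqn:ActionPlane}) reads $\rho(M)(P)=P\circ\phi_M$. For part~(1), I would then invoke the universal property of the polynomial ring: evaluating at any pair of elements of a commutative algebra is an algebra homomorphism, so the substitution $(X,Y)\mapsto(aX+bY,\,cX+dY)$ commutes with both the sum and the product of $\C[X,Y]$. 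Hence $\rho(M)(PQ)=\rho(M)(P)\,\rho(M)(Q)$ and $\rho(M)$ is $\C$--linear, which is exactly the assertion that $\rho(M)$ is an algebra endomorphism.

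For part~(2), the key identity is $\phi_{MN}=\phi_M\circ\phi_N$, which simply records that matrix multiplication corresponds to composition of linear maps. Precomposing with a polynomial $P$ yields $\rho(MN)(P)=P\circ\phi_{MN}=P\circ\phi_M\circ\phi_N=\rho(N)\bigl(\rho(M)(P)\bigr)$, which is precisely the antihomomorphism relation in the statement. To upgrade each $\rho(M)$ from an endomorphism to an automorphism, I would combine this with the trivial observation that $\rho(\Id)=\Id$ (the identity matrix gives the identity substitution): applying the antihomomorphism relation to the identities $M^{-1}M=MM^{-1}=\Id$ shows that $\rho(M^{-1})$ is a two-sided inverse of $\rho(M)$, so $\rho$ indeed lands in $\Aut_\C\bigl(\C[X,Y]\bigr)$.

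For part~(3), I would note that each of the forms $aX+bY$ and $cX+dY$ is homogeneous of degree $1$, so substituting them into a monomial $X^iY^j$ produces a homogeneous polynomial of degree $i+j$; by linearity $\rho(M)$ therefore maps $\C[X,Y]_n$ into itself. Since both $\rho(M)$ and its inverse $\rho(M^{-1})=\rho(M)^{-1}$ preserve this finite-dimensional subspace (of dimension $n+1$, spanned by the monomials $X^n, X^{n-1}Y, \dots, Y^n$), the restriction $\rho_n(M)$ is an automorphism of $\C[X,Y]_n$, giving the claimed representation $\rho_n\colon\SL(\C)\to\Aut_\C\bigl(\C[X,Y]_n\bigr)$.

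None of these steps presents a real obstacle; the only genuine subtlety is bookkeeping. One must keep the order of composition straight so that the \emph{left} action of $\SL(\C)$ on $\C^2$ passes to a \emph{right} action, and hence an antihomomorphism, on functions. It is also worth establishing multiplicativity before invertibility, since the inverse of $\rho(M)$ is produced precisely by the antihomomorphism relation of part~(2).
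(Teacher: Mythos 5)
Your proof is correct, and it fills in exactly the standard argument that the paper leaves implicit: the lemma is stated there with no proof at all (it is marked as immediate), and your reduction of everything to precomposition with $\phi_M$, the universal property of polynomial substitution, and the identity $\phi_{MN}=\phi_M\circ\phi_N$ is precisely the intended elementary reasoning. Your care with the order of composition (left action on $\C^2$ giving a right action, hence an antihomomorphism, on $\C[X,Y]$) matches the paper's own remark following the lemma.
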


The order reversal in the second conclusion reflects the fact that $\SL(\C)$ acts on $\C[X,Y]$ on the right. We could easily turn $\rho$ into a group homomorphism by composing it with any of the standard antiautomorphisms of $\SL(\C)$, such as 
$
\big(
\begin{smallmatrix}
 a&b\\c&d
\end{smallmatrix}
\big)
\mapsto
\big(
\begin{smallmatrix}
 a&b\\c&d
\end{smallmatrix}
\big)^{\mathrm t}
=
\big(
\begin{smallmatrix}
 a&c\\b&d
\end{smallmatrix}
\big)
$
or
$
\big(
\begin{smallmatrix}
 a&b\\c&d
\end{smallmatrix}
\big)
\mapsto
\big(
\begin{smallmatrix}
 a&b\\c&d
\end{smallmatrix}
\big)^{-1}
=
\big(
\begin{smallmatrix}
 d&-b\\-c&a
\end{smallmatrix}
\big)
$, and many authors do  this. For this reason, we will refer to $\rho$ and its restrictions $\rho_n$ as representations of $\SL(\C)$.

A classical property is that, up to isomorphism, the $\rho_n$ form the collection of all irreducible representations of $\SL(\C)$.

\section{The quantum plane and $\SLq(\A)$}
\label{sect:SL2q}

 For a nonzero number $q \in \C-\{0\}$, the  \emph{quantum plane} is the algebra $\C[X,Y]^q$ defined by two generators $X$ and $Y$, and by the relation $YX=qXY$. Namely, the elements of $\C[X,Y]^q$ are polynomials $P(X,Y)=\sum_{i=0}^m \sum_{j=0}^n a_{ij} X^i X^j$ that are multiplied using the relation $YX=qXY$. 

If we want to keep the property that  the matrices $\left(
\begin{smallmatrix}
 a&b\\c&d
\end{smallmatrix}\right) $ and $\left(
\begin{smallmatrix}
 a&c\\b&d
\end{smallmatrix}\right)$ act as algebra homomorphisms on $\C[X,Y]^q$, we need that
\begin{align*}
(cX+dY)(aX + bY) &= q (aX + bY)(cX+dY)\\
\text{and }
(bX+dY)(aX + cY) &= q (aX +cY)(bX+dY)
\end{align*}
to preserve the relation $YX=qXY$. Identifying the coefficients of $X^2$, $XY$ and $Y^2$, this would require
\begin{equation}\label{eqn:RelationsGL2q}
\begin{aligned}
 ba &= qab 
 &\qquad
 db&= qbd 
 &
 bc&=cb \\
 ca &= qac
 &
 dc&=qcd
&\qquad
ad- q^{-1} bc &= da - qbc
\end{aligned}
\end{equation}
which is clearly impossible if $q\neq 1$ and $a$, $b$, $c$, $d$ commute with each other. This leads us to the following definition (see for instance \cite[Chap.~IV]{Kas} for more background).

Given an algebra $\mathcal A$ over $\C$, let $\SLq(\A)$ denote the set of matrices
$
\left(
\begin{smallmatrix}
 a&b\\c&d
\end{smallmatrix}
\right)
$
where $a$, $b$, $c$, $d\in \mathcal A$ satisfy the relations of (\ref{eqn:RelationsGL2q}), as well as 
\begin{equation}
\label{eqn:QuantumDet1}
ad- q^{-1}bc =1.
\end{equation}

More formally, let $\SLq$ be the algebra defined by generators $a$, $b$, $c$, $d$ and by the relations of (\ref{eqn:RelationsGL2q}) and (\ref{eqn:QuantumDet1}). Then $\SLq(\A)$ can be interpreted as the set of all algebra homomorphisms $\SL^q \to \A$. 
The elements of $\SLq(\A)$ are called the \emph{$\mathcal A$--points of $\SLq$}.

Unlike $\SL(\C)$, the set $\SLq(\A)$ is far from being a group. It only comes with a partially defined multiplication. Indeed, if 
$
\big(
\begin{smallmatrix}
 a&b\\c&d
\end{smallmatrix}
\big)
$ and $
\big(
\begin{smallmatrix}
 a'&b'\\c'&d'
\end{smallmatrix}
\big) \in \SLq(\A)
$, the usual formula
\begin{equation}
\label{eqn:MatrixMultiply}
\bigg(
\begin{matrix}
 a&b\\c&d
\end{matrix}
\bigg)
\bigg(
\begin{matrix}
 a'&b'\\c'&d'
\end{matrix}
\bigg)
=
\bigg(
\begin{matrix}
 aa'+bc'&ab'+bd'\\ca'+ dc'&cb' + dd'
\end{matrix}
\bigg)
\end{equation}
gives an element of $\SLq(\A)$ only under additional hypotheses on the entries of these matrices, for instance if $a$, $b$, $c$, $d$ commute with $a'$, $b'$, $c'$, $d'$. This partially defined multiplication has an identity element 
$\left(
\begin{smallmatrix}
 1&0\\0&1
\end{smallmatrix}
\right) \in \SLq(\A)
$. However, the  operation of passing to the inverse somewhat misbehaves in the sense that the formal inverse $\big(
\begin{smallmatrix}
 a&b\\c&d
\end{smallmatrix}
\big)^{-1} = \big(
\begin{smallmatrix}
 d&-qb\\-q^{-1}c&a
\end{smallmatrix}
\big)
$ of $\big(
\begin{smallmatrix}
 a&b\\c&d
\end{smallmatrix}
\big) \in \SLq(\A)
$ is an element of $\SL^{q^{-1}}(\A)=\SLq(\A^\op)$ instead of $\SLq(\A)$; here $\A^\op$ is the \emph{opposite algebra} of the algebra $\A$, consisting of the vector space $\A$ endowed with the new multiplication $*_\op$ defined by the property that $a*_\op b=ba$ for every $a$, $b\in \A$. 
In general, the formula (\ref{eqn:MatrixMultiply}) gives a globally defined  multiplication $\SL(\A) \otimes \SL(\mathcal B) \to \SL(\A \otimes \mathcal B)$ for any two algebras $\A$ and~$\mathcal B$. 

In order to generalize to $\SLq(\A)$  the action of $\SL(\C)$ on $\C[X,Y]$, we introduce the  \emph{quantum $\A$--plane} as the algebra $\A[X,Y]^q= \A \otimes \C[X,Y]^q$. In practice the elements of $\A[X,Y]^q$ are polynomials $P(X,Y)=\sum_{i=0}^m \sum_{j=0}^n a_{ij} X^i X^j$ with coefficients $a_{ij} \in \A$, and are algebraically manipulated using the relation $YX=qXY$ while the variables $X$ and $Y$ commute with all elements of $\A$.

The relations defining $\SLq(\A)$ are specially designed that an $\mathcal A$--point of $\SLq$ acts as an algebra homomorphism on the quantum $\A$--plane $\A[X,Y]^q$, by the same formula (\ref{eqn:ActionPlane}) as in the commutative plane. More precisely, if $\End_\A (\A[X,Y]^q)$ denotes the space of $\A$--linear maps $\A[X,Y]^q \to \A[X,Y]^q$, we can define a map 
$$
\rho\colon \SLq(\A) \to \End_\A (\A[X,Y]^q)
$$
such that 
$$
\rho\left(
\begin{smallmatrix}
 a&b\\c&d
\end{smallmatrix}
\right) \Big(\sum_{i,j}a_{ij} X^i Y^j \Big) = \sum_{i,j}a_{ij} (aX+bY)^i (cX + dY)^j
$$
for every polynomial $\sum_{i,j}a_{ij}X^iY^j \in \A_q[X,Y]$. 

The map $\rho$ satisfies the following elementary properties.

\begin{lem}\label{lem:QuantumRep}$ $\pushQED{\qed}
\begin{enumerate}
\item For every $\left(\begin{smallmatrix}
 a&b\\c&d
\end{smallmatrix} \right)\in \SLq(\A)$, the $\A$--linear map
$
\rho\left(\begin{smallmatrix}
 a&b\\c&d
\end{smallmatrix} \right) \in \End_\A (\A[X,Y]^q)
$
is an algebra homomorphism.

\item For each $n$, the map $
\rho\left(\begin{smallmatrix}
 a&b\\c&d
\end{smallmatrix} \right) \in \End_\A (\A[X,Y]^q)
$ respects the space $\A[X,Y]_n^q = \A \otimes \C[X,Y]_n^q$ of homogeneous polynomials of degree $n$ in $X$ and $Y$ with coefficients in $\A$. As a consequence,  $\rho$ induces by restriction  a map 
\begin{equation*}
\rho_n\colon \SLq(\A) \to \End_\A (\A[X,Y]^q_n). 
\end{equation*}

\item If $a$, $b$, $c$, $d$ commute with  $a'$, $b'$, $c'$, $d'$ (so that the product $\big(\begin{smallmatrix}
 a&b\\c&d
\end{smallmatrix} \big) \big(\begin{smallmatrix}
 a'&b'\\c'&d'
\end{smallmatrix} \big) = \big(\begin{smallmatrix}
 aa'+bc' & ab' + bd'  \\ ca' + dc'& cb'+dd'
\end{smallmatrix} \big) \in \SLq(\A)$ makes sense), 
\begin{equation*}
\rho\left(
\begin{pmatrix}
 a&b\\c&d
\end{pmatrix}
\begin{pmatrix}
 a'&b'\\c'&d'
\end{pmatrix}
\right)
=
\rho\left(
\begin{matrix}
 a'&b'\\c'&d'
\end{matrix}
\right)
\circ
\rho\left(
\begin{matrix}
 a&b\\c&d
\end{matrix}
\right). \qedhere
\end{equation*}

\end{enumerate}
\end{lem}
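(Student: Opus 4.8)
The plan is to dispatch the three parts in order, with essentially all of the work concentrated in a single bilinear computation for part~(1). Throughout I write $T=\left(\begin{smallmatrix}a&b\\c&d\end{smallmatrix}\right)$ and $T'=\left(\begin{smallmatrix}a'&b'\\c'&d'\end{smallmatrix}\right)$, and set $X'=aX+bY$ and $Y'=cX+dY$ for the images of the variables.

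For part~(1), I would first note that $\A$--linearity is built into the definition: applying $\rho(T)$ to $\lambda\sum_{i,j}a_{ij}X^iY^j$ with $\lambda\in\A$ only multiplies every coefficient on the left by $\lambda$, hence multiplies the output on the left by $\lambda$. It then remains to see that the substitution $X\mapsto X'$, $Y\mapsto Y'$ extends to an algebra homomorphism. By the universal property of the quantum plane $\C[X,Y]^q$, which is generated by $X$ and $Y$ subject only to $YX=qXY$, this holds as soon as the images $q$--commute, namely $Y'X'=qX'Y'$ in $\A[X,Y]^q$. The core step is therefore to verify this identity, which is exactly the computation that motivated the relations~(\ref{eqn:RelationsGL2q}). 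Expanding $Y'X'=(cX+dY)(aX+bY)$ and $X'Y'=(aX+bY)(cX+dY)$, and using that $X$ and $Y$ commute with every element of $\A$ together with $YX=qXY$, I would collect the coefficients of $X^2$, $XY$ and $Y^2$. Matching $Y'X'=qX'Y'$ coefficient by coefficient then reduces to $ca=qac$ (from $X^2$), $db=qbd$ (from $Y^2$), and $cb+qda=qad+q^2bc$ (from $XY$); the last of these follows from $bc=cb$ together with $ad-q^{-1}bc=da-qbc$. Since all these relations appear among~(\ref{eqn:RelationsGL2q}), the images $q$--commute and part~(1) follows. I expect the bookkeeping in the $XY$--coefficient to be the only genuinely delicate point, as it is the single place where the ``mixed'' relation of~(\ref{eqn:RelationsGL2q}) intervenes; note in particular that the determinant normalization~(\ref{eqn:QuantumDet1}) is not needed here.

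Part~(2) is then immediate: since $X$ and $Y$ are each sent to a degree--one element, a monomial $X^iY^j$ of total degree $n$ is sent to a product of $n$ degree--one elements, hence to a homogeneous element of degree $n$. Thus $\rho(T)$ preserves the grading, restricts to $\A[X,Y]^q_n$, and induces $\rho_n$. For part~(3), both $\rho(TT')$ and $\rho(T')\circ\rho(T)$ are $\A$--linear algebra homomorphisms, so by part~(1) it suffices to check that they agree on the generators $X$ and $Y$. Using the product matrix~(\ref{eqn:MatrixMultiply}) on one side and $\A$--linearity on the other, I would compute $\rho(TT')(X)=(aa'+bc')X+(ab'+bd')Y$ and $\rho(T')\bigl(\rho(T)(X)\bigr)=a\,\rho(T')(X)+b\,\rho(T')(Y)=a(a'X+b'Y)+b(c'X+d'Y)$, which coincide, and likewise for $Y$. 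The reversal of order is precisely the phenomenon already recorded for the classical action, reflecting that $\SLq(\A)$ acts on $\A[X,Y]^q$ on the right; the commutativity hypothesis on the two sets of entries is exactly what guarantees that $TT'$ again lies in $\SLq(\A)$, so that the left-hand side $\rho(TT')$ is defined in the first place.
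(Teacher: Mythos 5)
Your proof is correct and follows exactly the route the paper intends: the lemma is left unproved in the text as an elementary statement, and your key verification --- that $Y'X'=qX'Y'$ reduces, upon comparing the coefficients of $X^2$, $XY$ and $Y^2$, to the relations of (\ref{eqn:RelationsGL2q}) (with the $XY$--coefficient handled by $bc=cb$ and $ad-q^{-1}bc=da-qbc$, and with (\ref{eqn:QuantumDet1}) indeed not needed) --- is precisely the computation the paper runs in the motivating display just before defining $\SLq(\A)$. Parts (2) and (3) are disposed of as expected, by the degree count and by checking the two compositions on the generators $X$ and $Y$.
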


\section{Traces and Chebyshev polynomials}

\subsection{Traces}
\label{sect:Traces}
 Traces can misbehave in the noncommutative context. However, we are  interested in endomorphisms of $\A[X,Y]_n^q = \A \otimes \C[X,Y]_n^q$, which have a natural trace.

To emphasize the key property needed, let $V$ be a finite dimensional vector space over $\C$, let $\A$ be an algebra over $\C$, and consider an $\A$--linear map $f \in \End_\A(\A \otimes V)$. The \emph{trace} of $f$ is defined as
$$
\Tr f = \sum_{i=0}^n a_{ii} \in \A
$$
where $e_0$, $e_1$, \dots, $e_n$ is a basis for the $\C$-vector space $V$, and where the coefficients $a_{ij} \in \A$ are defined by the property that $f(e_j) = \sum_{i=0}^n a_{ij}e_i$ in $\A \otimes V$ for every $j=0$, $1$, \dots, $n$. 

The usual commutative proof immediately extends to this context to give:
\begin{lem}
\label{lem:TraceInvariant}
The trace $\Tr f  \in \A$ is independent of the choice of the basis  $e_0$, $e_1$, \dots, $e_n$  for the $\C$-vector space $V$. \qed
\end{lem}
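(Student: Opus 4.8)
The plan is to follow the classical argument verbatim, while keeping careful track of one structural fact: although $\A$ is noncommutative, the change-of-basis data for $V$ lives in $\C$, which is central in $\A=\A$, and this is exactly what rescues the usual cyclic-invariance computation.

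First I would fix a second basis $e_0'$, $e_1'$, \dots, $e_n'$ for the $\C$-vector space $V$ and encode the change of basis by two matrices $P=(p_{ij})$ and $Q=(q_{ij})$ with entries in $\C$, defined by $e_j'=\sum_i p_{ij}e_i$ and $e_j=\sum_i q_{ij}e_i'$. Since $P$ and $Q$ are mutually inverse changes of basis for the same $\C$-vector space, they satisfy $PQ=QP=\Id$, equivalently $\sum_j p_{kj}q_{ji}=\delta_{ki}$. Next I would compute the matrix $(a_{ij}')$ of $f$ relative to the new basis, defined by $f(e_j')=\sum_i a_{ij}'e_i'$, in terms of the original matrix $(a_{ij})$. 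Substituting $e_j'=\sum_k p_{kj}e_k$, using the $\A$-linearity of $f$ together with $f(e_k)=\sum_i a_{ik}e_i$, and re-expanding $e_i=\sum_l q_{li}e_l'$, one is led to
$$
a_{lj}'=\sum_{i,k} q_{li}\,a_{ik}\,p_{kj}.
$$
Here the only subtlety is that the scalars $q_{li},p_{kj}\in\C$ commute with the entries $a_{ik}\in\A$, so this triple product is unambiguous and the rearrangement below is legitimate.

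Finally I would sum the diagonal entries and collect the central scalars:
$$
\sum_j a_{jj}' = \sum_j\sum_{i,k} q_{ji}\,a_{ik}\,p_{kj} = \sum_{i,k}\Big(\sum_j p_{kj}q_{ji}\Big)a_{ik} = \sum_{i,k}\delta_{ki}\,a_{ik} = \sum_i a_{ii} = \Tr f,
$$
which gives the asserted independence of the basis.

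The only place where noncommutativity could plausibly cause trouble is the middle equality of the last display, which in the commutative world is nothing but the cyclic invariance $\Tr(QAP)=\Tr(APQ)$. This identity fails for general products of noncommutative matrices, so the crux of the argument — and the single point I expect to require care — is to verify that it survives \emph{precisely because} $P$ and $Q$ have scalar entries that may be moved freely past the elements of $\A$. Everything else is the routine bookkeeping of the classical proof.
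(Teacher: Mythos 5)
Your proof is correct and is exactly what the paper has in mind: the lemma is stated with no written proof beyond the remark that ``the usual commutative proof immediately extends to this context,'' and your argument is that classical computation, with the crux correctly identified as the fact that the change-of-basis matrices have entries in $\C$ and hence commute with the entries of $f$ in $\A$ (which is also why, as the paper notes, the statement would fail for a mere $\A$-module basis of $\A\otimes V$).
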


Note that this property would be false if we only required $e_0$, $e_1$, \dots, $e_n$ to be a basis for the $\A$--module $\A \otimes V \cong \A^{n+1}$.

For practice, let us carry out a few elementary computations for the representations $\rho_n \colon \SLq(\A) \to \End_\A\big(\A[X,Y]_n^q\big)$ of Lemma~\ref{lem:QuantumRep}. 

For $n=1$, consider $ \big(\begin{smallmatrix}
a&b\\c&d
\end{smallmatrix} \big)\in \SLq(\A)$ and its image $\rho_1 \big(\begin{smallmatrix}
a&b\\c&d
\end{smallmatrix} \big)\in  \End_\A\big(\A[X,Y]_1^q\big)$. The polynomials $X$, $Y$ form a basis for $\C[X,Y]_1^q$. Since $\rho_1\big(\begin{smallmatrix}
a&b\\c&d
\end{smallmatrix} \big)(X) = aX + bY$ and  $\rho_1\big(\begin{smallmatrix}
a&b\\c&d
\end{smallmatrix} \big)(Y) = cX + dY$ we conclude that  $\Tr \rho_1\big(\begin{smallmatrix}
a&b\\c&d
\end{smallmatrix} \big) $ is equal to $a+d$, namely to what we have implicitly called $\Tr \big(\begin{smallmatrix}
a&b\\c&d
\end{smallmatrix} \big)$ in the introduction. 

For $n=2$,  an elementary computation gives
\begin{align*}
\rho_2 \big(\begin{smallmatrix}
a&b\\c&d
\end{smallmatrix} \big) (X^2)
&= (aX+bY)^2
&&= a^2 X^2 + (1+q^2) abXY + b^2 Y^2\\
\rho_2 \big(\begin{smallmatrix}
a&b\\c&d
\end{smallmatrix} \big) (XY)
&= (aX+bY)(cX+dY)
\kern -50pt
&&=ac X^2 + (ad+qbc) XY + bd Y^2\\
\rho_2 \big(\begin{smallmatrix}
a&b\\c&d
\end{smallmatrix} \big) (Y^2)
&= (cX+dY)^2 
&&=c^2 X^2 + (1+q^2) cdXY + d^2 Y^2
\end{align*}
so that
\begin{align*}
\Tr \rho_2 \big(\begin{smallmatrix}
a&b\\c&d
\end{smallmatrix} \big) 
&= a^2 + (ad+qbc) + d^2\\
&= a^2 + ad + da +d^2 -1\\
&= (a+d)^2-1
\end{align*}
by remembering that $da-qbc=1$ from the definition of $\SLq(\A)$. 

For $n=3$, a longer but similar first step gives that
\begin{align*}
\Tr \rho_3 \big(\begin{smallmatrix}
a&b\\c&d
\end{smallmatrix} \big) 
&= a^3 + \big( a^2d+q(1+q^2) abc \big) + \big( a(1+q^2) bcd + ad^2 \big) + d^3\\
&= a^3 + a^2d + ada + q^2 ada -a -q^2a + dad + q^2 dad - d -q^2d + ad^2 +d^3,
\end{align*}
using again the property that $da-qbc=1$.

Since $qbc=da-1$ and $bca=q^2 abc$, we have that $da^2-a=q^2ada-q^2a$. Similarly, because $dbc=q^2 bcd$, $d^2a-a = q^2dad-q^2d$. Substituting these values in our first expression for $\Tr \rho_3 \big(\begin{smallmatrix}
a&b\\c&d
\end{smallmatrix} \big) $ gives
\begin{align*}
\Tr \rho_3 \big(\begin{smallmatrix}
a&b\\c&d
\end{smallmatrix} \big) 
&= a^3 + a^2d + ada +  da^2 + dad +  d^2a + ad^2 +d^3-2a-2d\\
&= (a+d)^3-2(a+d). 
\end{align*}

In all three cases, we have been able to express the trace $\Tr \rho_n \big(\begin{smallmatrix}
a&b\\c&d
\end{smallmatrix} \big) $ as a polynomial in $\Tr \big(\begin{smallmatrix}
a&b\\c&d
\end{smallmatrix} \big) =a+d$. We will see in Corollary~\ref{cor:Traces&ChebyshevSecondKind} that this is a general phenomenon.  Part of the purpose in the above calculations was to let the reader experience the fact that this property is not that easy to check by bare-hand computations, which justifies the introduction of the more theoretical constructions of \S \ref{sect:SL2q&Uq} and \S \ref{sect:ClebschGordan} below. 

\subsection{Chebyshev polynomials} 
\label{sect:Chebyshev}

We will encounter two types of Chebyshev polynomials. The \emph{(normalized) Chebyshev polynomials of the first kind} are the polynomials $T_n(t) \in\mathbb Z[t]$ recursively defined by 
\begin{equation}
\label{eqn:Chebyshev1Def}
\begin{split}
T_{n+1}(t) &= t\,T_n(t) - T_{n-1}(t)\\
T_1(t) &=t  \\
T_0(t)&=2.
\end{split}
\end{equation}

The \emph{(normalized) Chebyshev polynomials of the second kind} $S_n(t) \in\mathbb Z[t]$ are remarkably similar, and defined by 
\begin{equation}
\label{eqn:Chebyshev2Def}
\begin{split}
S_{n+1} &= t\,S_n(t) - S_{n-1}(t)\\
S_1(t) &=t  \\
S_0(t)&=1.
\end{split}
\end{equation}

In particular, in addition to $T_0(t)=2$, $S_0(t) =1$ and $T_1(t) = S_1(t)=t$, 
\begin{align*}
T_2(t) &= t^2-2 & S_2(t) &= t^2-1\\
T_3(t) &= t^3 -3t & S_3(t) &= t^3-2t\\
T_4(t) &= t^4-4t^2+2 & S_4(t) &= t^4-3t^2 +1\\
T_5(t) &= t^5-5t^3 +5t & S_5(t) &= t^5-4t^3 +3t\\
T_6(t) &= t^6-6t^4 +9t^2 -2 & S_6(t) &= t^6-5t^4 + 6t^2 -1
\end{align*}

\begin{lem}
\label{lem:ChebyshevFirstSecondKind}
The two types of Chebyshev polynomials are related by the property that
$$
T_n(t) = S_n (t) - S_{n-2}(t)
$$
for every $n\geq 2$
\end{lem}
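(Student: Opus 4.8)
The plan is to argue by induction on $n$, exploiting the fact that both families of Chebyshev polynomials obey the \emph{same} second-order linear recurrence $u_{n+1} = t\,u_n - u_{n-1}$, and that this recurrence is inherited by differences of shifted terms. Concretely, set $R_n(t) = S_n(t) - S_{n-2}(t)$ for $n \geq 2$; the goal is then to prove $R_n = T_n$ for all $n \geq 2$.

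First I would dispose of the base cases by direct substitution into the explicit low-degree values listed above: one reads off $R_2 = S_2 - S_0 = (t^2-1) - 1 = t^2 - 2 = T_2$ and $R_3 = S_3 - S_1 = (t^3 - 2t) - t = t^3 - 3t = T_3$, so the identity holds for $n = 2$ and $n = 3$.

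Next, for the inductive step I would show that $R_n$ satisfies the defining recurrence of the Chebyshev polynomials. Using the relation $S_{n+1} = t\,S_n - S_{n-1}$ together with the same relation shifted down by two, namely $S_{n-1} = t\,S_{n-2} - S_{n-3}$, one computes $R_{n+1} = S_{n+1} - S_{n-1} = (t\,S_n - S_{n-1}) - (t\,S_{n-2} - S_{n-3}) = t\,R_n - R_{n-1}$, which is exactly the recurrence $T_{n+1} = t\,T_n - T_{n-1}$ obeyed by the first-kind polynomials. Since $(R_n)$ and $(T_n)$ therefore satisfy the same recurrence and agree at $n = 2$ and $n = 3$, induction yields $R_n = T_n$ for all $n \geq 2$.

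The only delicate point, and hence the ``main obstacle'' although it is entirely minor, is keeping track of index ranges at the bottom of the recursion: the computation of $R_{n+1}$ above invokes $S_{n-3}$, so it is valid only once $n \geq 3$, which is precisely why two base cases rather than one must be checked by hand. Alternatively, one could bypass the bookkeeping by substituting $t = \zeta + \zeta^{-1}$ and using the closed forms $T_n(t) = \zeta^n + \zeta^{-n}$ and $S_n(t) = (\zeta^{n+1} - \zeta^{-(n+1)})/(\zeta - \zeta^{-1})$, whereupon the numerator of $S_n - S_{n-2}$ factors as $(\zeta^n + \zeta^{-n})(\zeta - \zeta^{-1})$ and the identity falls out after cancelling $\zeta - \zeta^{-1}$; but the inductive argument is more self-contained given the material already developed in the excerpt.
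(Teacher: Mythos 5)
Your proof is correct and takes essentially the same route as the paper, which disposes of the lemma in one sentence by noting that $T_n$ and $S_n - S_{n-2}$ satisfy the same linear recurrence and agree on initial values; you have simply written out the induction (base cases $n=2,3$ and the telescoped recurrence for $R_n = S_n - S_{n-2}$) that the paper leaves implicit.
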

\begin{proof}
This is an immediate consequence of the fact that the $T_n(t)$ and $S_n(t)$ satisfy the same linear recurrence relation, and of the initial conditions. 
\end{proof}

The following classical properties connect the Chebyshev polynomials $T_n(t)$ and $S_n(t)$ to the group $\SL(\C)$. 
\begin{lem}
\label{lem:ChebyshevTraces}
For every $A\in \SL(\C)$,
\begin{align*}
T_n (\Tr A) &= \Tr A^n \\
S_n (\Tr A)&= \Tr \rho_n(A) 
\end{align*}
where  $\rho_n \colon \SL(\C) \to \End \bigl( \C[X,Y]_n \bigr)$ is the $(n+1)$--dimensional representation of~{\upshape\S \ref{sect:SL2}}. 
\end{lem}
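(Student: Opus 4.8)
The plan is to treat the two identities separately, in each case deriving the defining recursion \eqref{eqn:Chebyshev1Def} or \eqref{eqn:Chebyshev2Def} together with the correct initial values, and then concluding by induction on $n$.

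For the first identity $T_n(\Tr A) = \Tr A^n$, the key input is the Cayley--Hamilton theorem. Since $A \in \SL(\C)$ has determinant $1$, its characteristic polynomial is $\lambda^2 - (\Tr A)\lambda + 1$, so that $A^2 = (\Tr A)\,A - \Id$. Multiplying this relation by $A^{n-1}$ and taking traces yields $\Tr A^{n+1} = (\Tr A)\,\Tr A^n - \Tr A^{n-1}$, which is precisely the recursion defining $T_n$ evaluated at $t = \Tr A$. As $\Tr A^0 = \Tr \Id = 2$ and $\Tr A^1 = \Tr A$ match the initial conditions $T_0(t)=2$, $T_1(t)=t$, an immediate induction gives the claim.

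For the second identity $S_n(\Tr A) = \Tr \rho_n(A)$, I would argue by diagonalization. Both sides are polynomial, hence continuous, functions of the entries $a$, $b$, $c$, $d$ of $A$, and the diagonalizable matrices are dense in $\SL(\C)$; it therefore suffices to verify the identity when $A = \big(\begin{smallmatrix} \lambda & 0 \\ 0 & \lambda^{-1}\end{smallmatrix}\big)$. For such an $A$ the action \eqref{eqn:ActionPlane} sends $X \mapsto \lambda X$ and $Y \mapsto \lambda^{-1}Y$, so the monomials $X^{n-i}Y^{i}$ form an eigenbasis of $\rho_n(A)$ with eigenvalues $\lambda^{n-2i}$ for $i = 0$, $1$, \dots, $n$. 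By Lemma~\ref{lem:TraceInvariant} the trace is then $\sum_{i=0}^n \lambda^{n-2i} = \lambda^{n} + \lambda^{n-2} + \dots + \lambda^{-n}$, and the geometric-sum identity $\sum_{i=0}^n \lambda^{n-2i} = (\lambda^{n+1} - \lambda^{-n-1})/(\lambda - \lambda^{-1})$ shows that this quantity satisfies the recursion and initial conditions of \eqref{eqn:Chebyshev2Def} evaluated at $\lambda + \lambda^{-1} = \Tr A$. This proves $\Tr \rho_n(A) = S_n(\Tr A)$ on diagonal matrices, and density finishes the argument.

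I expect the main subtlety to lie in the passage from diagonal to arbitrary $A$ in the second identity, which rests on the density of diagonalizable matrices together with the fact that $A \mapsto \Tr \rho_n(A)$ is genuinely polynomial in the entries. An alternative that avoids this density argument, and that foreshadows the representation-theoretic viewpoint of the rest of the paper, is to use the Clebsch--Gordan decomposition $\C[X,Y]_1 \otimes \C[X,Y]_n \cong \C[X,Y]_{n+1} \oplus \C[X,Y]_{n-1}$: comparing characters gives $(\Tr A)\,\Tr \rho_n(A) = \Tr\rho_{n+1}(A) + \Tr\rho_{n-1}(A)$, which is again the $S_n$ recursion, with $\Tr\rho_0(A)=1$ and $\Tr\rho_1(A)=\Tr A$ as initial data. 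The cost of this route is that one must first establish the decomposition, namely exactly the classical result whose quantum analogue is developed in \S\ref{sect:ClebschGordan}.
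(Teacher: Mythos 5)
Your argument for the first identity is exactly the paper's: Cayley--Hamilton gives $A^2=(\Tr A)A-\Id$, hence the recursion $\Tr A^{n+1}=(\Tr A)\Tr A^n-\Tr A^{n-1}$, and the initial values match those of \eqref{eqn:Chebyshev1Def}. For the second identity the routes diverge. The paper does not prove it at this point at all: it remarks that the statement is not needed later and simply cites the $q=1$ specialization of Corollary~\ref{cor:Traces&ChebyshevSecondKind}, whose proof runs through the quantum Clebsch--Gordan decomposition of \S\ref{sect:ClebschGordan} (essentially your ``alternative'' route, but carried out in the quantum setting and then specialized). Your primary argument --- diagonalize, read off the eigenvalues $\lambda^{n-2i}$ of $\rho_n(A)$ on the monomial basis, sum the geometric series, and extend by density and polynomiality --- is correct and entirely self-contained; it buys an elementary classical proof at the cost of not generalizing to $\SLq(\A)$, where there is no spectral theory and the Clebsch--Gordan character identity $(\Tr A)\Tr\rho_n(A)=\Tr\rho_{n+1}(A)+\Tr\rho_{n-1}(A)$ is the only available mechanism (this is exactly why the paper invests in \S\ref{sect:SL2q&Uq}--\S\ref{sect:ClebschGordan}). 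One small point worth making explicit in your density step: passing from diagonal to diagonalizable matrices uses that both sides are conjugation-invariant, which for $\Tr\rho_n$ follows from $\rho$ being an antihomomorphism together with Lemma~\ref{lem:TraceInvariant}; and for $\lambda=\pm1$ the quotient form $(\lambda^{n+1}-\lambda^{-n-1})/(\lambda-\lambda^{-1})$ degenerates, so one should verify the recursion on the sum $\sum_{i=0}^n\lambda^{n-2i}$ itself rather than on the closed form. Neither issue affects correctness.
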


\begin{proof}
From the Cayley-Hamilton Theorem (or inspection)
$$
A^2 - (\Tr A )A+\mathrm{Id} =0
$$
for every $A=\big(
\begin{smallmatrix}
a&b\\c&d
\end{smallmatrix}
\big) \in \SL(\C)$. Multiplying both sides by $A^{n-1}$ and taking the trace, we see that $\Tr A^n$ satisfies the same recurrence relation
$$
\Tr A^{n+1} = (\Tr A) (\Tr A^n) - \Tr A^{n-1}
$$
as $T_n(\Tr A)$, as well as the same initial values for $n=0$ and $n=1$. It follows that $\Tr A^n = T_n(\Tr A)$ for every $n$.

For the property that $S_n (\Tr A)= \Tr \rho_n(A) $, which is not needed in this article, we can just refer to the special case $q=1$ of Corollary~\ref{cor:Traces&ChebyshevSecondKind} below. 
\end{proof}

Note the following closed form expression for the Chebyshev polynomial $T_n(t)\in \Z[t]$. 

\begin{lem}
\label{lem:ChebyshevClosedForm}
$$\textstyle
T_n(t) =\left( \frac{t+\sqrt{t^2-4}}2 \right)^n + \left( \frac{t-\sqrt{t^2-4}}2 \right)^n
$$
\end{lem}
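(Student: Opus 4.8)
The plan is to recognize the right-hand side as the solution of the linear recurrence~(\ref{eqn:Chebyshev1Def}) attached to the roots of its characteristic equation. Set
$$
\lambda_\pm = \frac{t\pm\sqrt{t^2-4}}2,
$$
regarded as elements of the quadratic extension $\Z[t][\sqrt{t^2-4}]$. These are exactly the two roots of $\lambda^2 - t\lambda + 1 = 0$, so that $\lambda_+ + \lambda_- = t$ and $\lambda_+\lambda_- = 1$. The assertion to prove is then that $T_n(t) = \lambda_+^n + \lambda_-^n$ for every $n\geq 0$.

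First I would check that $f(n) = \lambda_+^n + \lambda_-^n$ obeys the same recurrence as $T_n(t)$. Since each root satisfies $\lambda_\pm^2 = t\lambda_\pm - 1$, multiplying by $\lambda_\pm^{n-1}$ gives $\lambda_\pm^{n+1} = t\lambda_\pm^n - \lambda_\pm^{n-1}$; adding the two identities yields $f(n+1) = t\,f(n) - f(n-1)$, which is precisely the recurrence~(\ref{eqn:Chebyshev1Def}).

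Next, the initial values agree: $f(0) = 1 + 1 = 2 = T_0(t)$ and $f(1) = \lambda_+ + \lambda_- = t = T_1(t)$. Since $f(n)$ and $T_n(t)$ satisfy the same second-order linear recurrence and coincide for $n = 0$ and $n = 1$, a one-line induction gives $f(n) = T_n(t)$ for all $n$, which is the claimed formula.

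I do not expect a genuine obstacle here; the only point deserving a word of care is the formal status of $\sqrt{t^2-4}$. The individual summands $\lambda_\pm^n$ lie in $\Z[t][\sqrt{t^2-4}]$ and are not themselves polynomials, but their sum is fixed by the involution $\sqrt{t^2-4}\mapsto -\sqrt{t^2-4}$, which swaps $\lambda_+$ and $\lambda_-$; hence $f(n)$ lies in the fixed ring $\Z[t]$. The recurrence argument above identifies this symmetric combination with $T_n(t)$ directly, without ever having to expand the radical.
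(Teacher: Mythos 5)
Your proof is correct, but it takes a different route from the paper. The paper deduces the closed form from Lemma~\ref{lem:ChebyshevTraces}: it picks a matrix $A\in\SL(\C)$ with $\Tr A=t$, notes that when $t^2-4\neq 0$ the numbers $\frac{t\pm\sqrt{t^2-4}}2$ are the eigenvalues of $A$, so that $T_n(t)=\Tr A^n$ is the sum of their $n$-th powers, and then handles the degenerate case $t^2-4=0$ by continuity. You instead verify directly that $f(n)=\lambda_+^n+\lambda_-^n$ satisfies the recurrence~(\ref{eqn:Chebyshev1Def}) (since $\lambda_\pm^2=t\lambda_\pm-1$) with the right initial values $f(0)=2$, $f(1)=t$, and conclude by induction. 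Your argument is more self-contained and more uniform: it needs no appeal to Lemma~\ref{lem:ChebyshevTraces}, no case distinction at $t=\pm2$, and no continuity argument, since everything is done formally in a quadratic extension and the Galois-invariance remark correctly explains why the sum is a genuine polynomial. (Strictly speaking the ring should allow division by $2$, e.g.\ $\Z[\tfrac12][t][\sqrt{t^2-4}]$ or $\Q(t)[\sqrt{t^2-4}]$, but this is cosmetic.) What the paper's route buys is thematic coherence: it exhibits the closed form as an instance of the trace identity $T_n(\Tr A)=\Tr A^n$, which is the organizing principle of the whole article, whereas your route buys brevity and the avoidance of a limiting argument.
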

\begin{proof}
Let $A \in \SL(\C)$ be a matrix such that $\Tr A=t$. 
If $t^2-4 \neq 0$, the characteristic polynomial $\lambda^2 -t\lambda+1$ of $A$ has two distinct roots $\frac{t\pm\sqrt{t^2-4}}2 $, which consequently are the eigenvalues of $A$. Then $A^n$ has eigenvalues $\left(\frac{t\pm\sqrt{t^2-4}}2\right)^n $ and, by Lemma~\ref{lem:ChebyshevTraces},
$$\textstyle
T_n(t) =\Tr A^n = \left( \frac{t+\sqrt{t^2-4}}2 \right)^n + \left( \frac{t-\sqrt{t^2-4}}2 \right)^n.
$$

By continuity, the property also holds when  $t^2-4 = 0$. 
\end{proof}

\subsection{The Hopf algebras $\SLq$ and $\Uq$}
\label{sect:SL2q&Uq}

 For most of the article, we are trying to keep the exposition at an elementary level in order to make the algebra more intuitive and to emphasize its connection with geometry. However, we now need deeper algebraic concepts and constructions, which will enable us to apply the well-known Clebsch-Gordan Decomposition for $\Uq$ (Theorem~\ref{thm:ClebschGordanUq}) to obtain a similar statement (Proposition~\ref{prop:ClebschGordanSL2q(A)}) for the set $\SLq(\A)$ of  2-by-2 matrices that we are interested in. 
 We follow here the conventions of \cite[Chap.~VI--VII] {Kas}. 

We already encountered the $\C$--algebra $\SLq$, defined by generators $a$, $b$, $c$, $d$ and by the relations of (\ref{eqn:RelationsGL2q}--\ref{eqn:QuantumDet1}). In particular, $\SLq(\A)$ is the set of  homomorphisms from $\SLq$ to the algebra~$\A$. 

A better known object is the quantum group $\Uq$, which is a deformation of the enveloping algebra of the Lie algebra $\mathfrak{sl}_2(\C)$ of $\SL(\C)$; see \cite{KiR, Dr1, Dr2, Jim}. Recall that $\Uq$ is defined by generators $E$, $F$, $K$, $K^{-1}$ and by the relations
\begin{equation}
 \begin{aligned}
 KK^{-1}&=K^{-1}K=1\qquad      &KE&=q^2EK\\
 EF-FE&=\frac{K-K^{-1}}{q-q^{-1}}      &KF&=q^{-2}FK
\end{aligned}
\end{equation}
This is a Hopf algebra, whose comultiplication $\Delta\colon \Uq \to \Uq \otimes \Uq$, counit $\epsilon \colon \Uq \to \C$ and antipode map $S \colon \Uq \to \Uq$ are respectively determined by the properties that
\begin{align*}
 \Delta(E) &= E\otimes K + 1\otimes E &
  \epsilon(E) &=0 &
  S(E)&= -EK^{-1}
 \\
 \Delta(F) &= F\otimes 1 + K^{-1} \otimes F &
   \epsilon(F) &=0 &
   S(F)&= -KF
 \\
 \Delta(K) &= K \otimes K&
\epsilon(K) &=1 &
 S(K)&= K^{-1}.
\end{align*}

Similarly,  $\SLq$ is a Hopf algebra with comultiplication $\Delta\colon \SLq \to \SLq \otimes \SLq$, counit $\epsilon \colon \SLq \to \C$ and antipode  $S \colon \SLq \to \SLq$ given by
\begin{align*}
 \Delta(a) &= a \otimes a + b\otimes c &
  \epsilon(a)&= 1 &
  S(a)&= d 
 \\
 \Delta(b) &= a\otimes b + b\otimes d &
   \epsilon(b)&= 0 &
   S(b)&= -qb
\\
 \Delta(c) &= c \otimes a + d \otimes c &
   \epsilon(c)&= 0 &
    S(c)&= -q^{-1}c 
\\
 \Delta(d) &= c \otimes b + d \otimes d &
   \epsilon(d)&= 1 &
    S(d)&=a.
\end{align*}

When $q=1$, the algebra $\SL^1$ is just the algebra of regular (= polynomial) functions $\SL(\C) \to \C$ on the algebraic group $\SL(\C)$. The comultiplication $\Delta\colon \SL^1 \to \SL^1 \otimes \SL^1$ then is the algebra homomorphism induced by the group multiplication $\SL(\C) \times \SL(\C) \to \SL(\C)$, the counit $\epsilon \colon \SL^1 \to \C$ is induced by the map $\{*\} \to \SL(\C)$ sending $*$ to the identity $\big( 
\begin{smallmatrix}
 1&0\\0&1
\end{smallmatrix}\big)$,
and the  antipode  $S \colon \SL^1 \to \SL^1$ is induced by $\big( 
\begin{smallmatrix}
a&b\\c&d
\end{smallmatrix}\big)
\mapsto 
\big( 
\begin{smallmatrix}
a&b\\c&d
\end{smallmatrix}\big)^{-1}$. 

Similarly, as $q\to 1$, the quantum group $\Uq$ converges to the enveloping algebra $\mathrm U(\mathfrak{sl}_2)$ of the Lie algebra of $\SL(\C)$, by consideration of $H=\frac{K-K^{-1}}{q-q^{-1}}$.

The relationship between $\SLq$ and $\Uq$ comes in the form of a linear pairing
\begin{equation}
\label{eqn:DualityDef}
 \begin{aligned}
 \langle\  ,\, \rangle \colon
 \Uq &\otimes \SLq &&\longrightarrow &&\C\\
 U &\otimes \alpha && \longmapsto &\langle U&, \alpha \rangle
\end{aligned}
\end{equation}
determined by the properties that
\begin{equation}
\label{eqn:DualityGenerators}
 \begin{aligned}
\langle E,a \rangle&=0 \quad&
\langle E,b \rangle&=1 \quad&
\langle E,c \rangle&=0 \quad&
\langle E,d \rangle&=0 \quad\\
\langle F,a \rangle&=0 \quad&
\langle F,b \rangle&=0 \quad&
\langle F,c \rangle&=1 \quad&
\langle F,d \rangle&=0 \quad\\
\langle K,a \rangle&=q \quad&
\langle K,b \rangle&=0 \quad&
\langle K,c \rangle&=0 \quad&
\langle K,d \rangle&=q^{-1}
\end{aligned}
\end{equation}
and 
\begin{align}
\label{eqn:DualityCompatibleMultComult1}
\langle U, \alpha \beta \rangle &= \sum_{(U)} \langle U',\alpha \rangle \langle U'' ,\beta \rangle\\
\label{eqn:DualityCompatibleMultComult2}
\langle UV, \alpha \rangle &= \sum_{(\alpha)} \langle U, \alpha' \rangle \langle V, \alpha'' \rangle
\end{align}
for every $\alpha$, $\beta \in \SLq$ and $U$, $V\in \Uq$, using Sweedler's notation that
\begin{align*}
 \Delta(U) &= \sum_{(U)} U' \otimes U'' \in \Uq \otimes \Uq\\
\text{and }  \Delta(\alpha) &= \sum_{(\alpha)} \alpha' \otimes \alpha'' \in \SLq \otimes \SLq
\end{align*}
for the comultiplications of $\SLq$ and $\Uq$. See Lemma~\ref{lem:Duality&FundamentalRepUq} for an interpretation of the formulas of (\ref{eqn:DualityGenerators}), and see \cite{Dr2, Tak1, Tak2}  and \cite[\S VII.4]{Kas} for  details. 

In particular, the duality $\langle \ , \, \rangle$ induces a linear map $\delta\colon\SLq \to \Uq^*$ from $\SLq$ to the dual of $\Uq$. We will need the following property. 

\begin{lem}[Takeuchi \cite{Tak2}]
 \label{lem:Takeuchi}
 The above duality map $\delta\colon\SLq \to \Uq^*$ is injective. \qed
\end{lem}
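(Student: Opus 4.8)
The plan is to prove the equivalent assertion that the pairing $\langle\ ,\ \rangle$ of (\ref{eqn:DualityDef}) is nondegenerate in its second variable: if $\alpha \in \SLq$ satisfies $\langle U, \alpha \rangle = 0$ for every $U \in \Uq$, then $\alpha = 0$. I would verify this by choosing explicit spanning families on both sides and showing that the resulting Gram matrix of the pairing has trivial kernel. On the $\SLq$ side I would use the standard Poincar\'e--Birkhoff--Witt basis attached to the relations (\ref{eqn:RelationsGL2q}--\ref{eqn:QuantumDet1}): every element of $\SLq$ is uniquely a finite linear combination of the ordered monomials $b^j c^k a^i$ ($i,j,k \geq 0$) and $b^j c^k d^l$ ($j,k\geq 0$, $l\geq 1$), the quantum determinant relation $ad-q^{-1}bc=1$ being exactly what eliminates monomials involving both $a$ and $d$. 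On the $\Uq$ side I would pair against the PBW family $F^r K^t E^s$.

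It is useful to note first that $\delta$ is a morphism of Hopf algebras: relation (\ref{eqn:DualityCompatibleMultComult1}) says that $\delta$ is multiplicative for the convolution product of $\Uq^*$, while (\ref{eqn:DualityCompatibleMultComult2}) says that $\delta$ is a morphism of coalgebras, so that $\delta$ is a bialgebra map. Hence $\ker\delta$ is a Hopf ideal; this is convenient bookkeeping but does not by itself give injectivity, so the real work is computational. That computation rests on three elementary facts, each read off from the coproducts of $E$, $F$ and $K$. Since $K$ is group-like, $\langle K^t,\cdot\rangle$ is an algebra character with $\langle K^t,a\rangle = q^t$, $\langle K^t,d\rangle=q^{-t}$ and $\langle K^t,b\rangle=\langle K^t,c\rangle=0$; the associated left and right $K$-actions equip $\SLq$ with a bigrading by $K$-weights that the pairing respects. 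Next, because $\Delta E = E\otimes K + 1\otimes E$, the functional $\langle E,\cdot\rangle$ is a skew-derivation, and iterating (\ref{eqn:DualityCompatibleMultComult2}) gives $\langle E^s,b^j\rangle = \delta_{s,j}\,\gamma_j$ with $\gamma_j$ a quantum factorial; so powers of $E$ detect the exponent of $b$. The symmetric computation with $F$ (using $\Delta F = F\otimes 1 + K^{-1}\otimes F$ and $\langle F,c\rangle=1$) detects the exponent of $c$.

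The crux, and the main obstacle, is that $E$ and $F$ do not commute and their coproducts are twisted by $K$, so pairing against a product $F^r K^t E^s$ does not isolate one exponent cleanly but also produces lower-order cross terms. Concretely, within a single $K$-bigraded component the monomials of a given type form a one-parameter family $b^j c^{j+p} a^i$ (with $i$ and $p=k-j$ pinned down by the two weights and $j$ free), and separating these is where the content lies. I would order the monomials in each component by their total degree in $b$ and $c$ and show that the Gram matrix against the family $F^r K^t E^s$ is triangular for this order, with diagonal entries the nonzero products of the quantum factorials produced by $E$ and $F$, all cross terms lying strictly below the diagonal. A finite linear combination annihilated by every $U$ then has a vanishing triangular system of coordinates, forcing $\alpha=0$. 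Since the diagonal entries are quantum factorials, the argument establishes injectivity exactly when $q$ is not a root of unity, which is the generic regime in which this duality is used.
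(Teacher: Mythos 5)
The paper gives no proof of this lemma at all: it is stated with an immediate end-of-proof mark and attributed to Takeuchi, so your argument is not competing with an internal proof but with a citation. What you sketch is, in substance, the standard proof of Takeuchi's nondegeneracy theorem (compare \cite[\S VII.4]{Kas}): pair the PBW basis $\{b^jc^ka^i\}\cup\{b^jc^kd^l\}$ of $\SLq$ against the PBW family $F^rK^tE^s$ of $\Uq$, use the $K$--bigrading to cut each weight component down to a one-parameter family $a^ib^jc^{j+p}$, and exhibit a triangular Gram matrix with quantum-factorial diagonal. The skeleton is right and the supporting computations you quote are correct, but the step you yourself single out as the crux --- that every cross term created by the $K$--twisted coproducts of $E$ and $F$ lands strictly below the diagonal for your ordering --- is asserted rather than carried out, and that is precisely the part of Takeuchi's proof that takes work; as written this is a credible plan rather than a complete argument. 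One point in your favor deserves emphasis: your restriction to $q$ not a root of unity is not an artifact of the method but a hypothesis that the paper's statement omits and that is genuinely necessary. Indeed, if $q^2$ is a primitive $n$--root of unity with $n\geq 2$, then $b^n\neq 0$ in $\SLq$ while $\langle F^rK^tE^s,b^n\rangle=0$ for all $r,t,s$: the absence of $c$ forces $r=0$, the bigrading forces $s=n$, and $\langle K^tE^n,b^n\rangle$ is, up to a power of $q$, the quantum factorial $\QQI{n}{q}\QQI{n-1}{q}\cdots\QQI{1}{q}$, which vanishes because $\QQI{n}{q}=0$; hence $b^n\in\ker\delta$. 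This does not damage the paper --- the lemma is invoked only inside Proposition~\ref{prop:ClebschGordanSL2q}, and Corollary~\ref{cor:Traces&ChebyshevSecondKind} first restricts to $q$ not a root of unity and reaches the remaining cases by continuity --- but your qualified version of the statement is the one that is actually true, and if you complete the triangularity computation you will have made this part of the paper self-contained, which is what your route buys over the citation.
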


This is analogous to the property that, because the Lie group $\SL(\C)$ is connected, a regular function on $\SL(\C)$ is completely determined by its  derivatives and higher derivatives at the identity element. 

In general, the representation theory of a Lie algebra is significantly easier to analyze than that of the corresponding Lie group. The same phenomenon in the quantum world is one of the reasons why $\Uq$ is more popular than $\SLq$. 

In particular, there is an action 
$\sigma \colon \Uq \otimes \C[X,Y]^q \to \C[X,Y]^q $ of $\Uq$ over the quantum plane $\C[X,Y]^q$ by ``quantum derivation'', defined by the property that 
\begin{equation}
 \begin{split}
\sigma(E \otimes X^kY^l) &= \QQI{l}{q} X^{k+1} Y^{l-1} \qquad
\sigma(F \otimes X^kY^l) = \QQI{k}{q} X^{k-1} Y^{l+1} \\
\sigma(K \otimes X^kY^l) &= q^{k-l} X^{k} Y^{l} 
\end{split}
\end{equation}
where $\QQI kq$ denotes the other type of quantum integer 
$$
\QQI kq = \frac{q^k - q^{-k}}{q-q^{-1}} = q^{k-1} + q^{k-3} + \dots + q^{-k+3} + q ^{-k+1}= q^{-k+1} \QI{k}{q^2}.
$$

This action restricts to the space of homogeneous polynomials of degree $n$, and gives an $(n+1)$--dimensional representation
$$
\sigma_n \colon \Uq \to \End_\C\big( \C[X,Y]_n^q \big)
$$
for every $n$. 

When $q$ is not a root of unity, the $\sigma_n$ essentially realize all irreducible finite-dimensional representations of $\Uq$, up to isomorphism. To describe all irreducible finite-dimensional representations of $\Uq$, one just need one more  family of similar representations $\sigma_n'$, related to $\sigma_n$ by a simple sign twist. See for instance \cite[\S VI.2]{Kas}. 

Similarly, the representation $ \rho\colon \SLq(\A) \to \End_\A (\A[X,Y]^q) $ of \S \ref{sect:SL2q} comes from a coaction
$$
\tau \colon \C[X,Y]^q \to \SLq \otimes \C[X,Y]^q
$$ 
defined by the property  that 
$$
\tau \big(  P(X,Y) \big) = P(a\otimes X + b\otimes Y, c\otimes X + d \otimes Y)
$$
for every polynomial $P(X,Y) \in \C[X,Y]^q$. Indeed, if $A \in \SLq(\A)$ is considered as an algebra homomorphism $A \colon \SLq \to \A$, then $\rho(A) \in \End_\A (\A[X,Y]^q)$ is clearly the $\A$--linear extension of the $\C$--linear map $ \big(A \otimes \Id_{\C[X,Y]^q}\big) \circ \tau \colon \C[X,Y]^q \to \A[X,Y]^q$. 

The following statement relates the duality $\langle\ ,\, \rangle$ of (\ref{eqn:DualityDef}) to the 2--dimensional representation $\sigma_1\colon \Uq \to \End_\C\big( \C[X,Y]_1^q\big)$. 

\begin{lem}
\label{lem:Duality&FundamentalRepUq}
 For every $U\in \Uq$, the matrix of $\sigma_1(U) \in \End_\C\big( \C[X,Y]_1^q\big)$ in the basis $\{ X, Y\}$ is 
 $$
 \sigma_1(U) = 
\begin{pmatrix}
 \langle U,a\rangle &  \langle U,b\rangle\\
 \langle U,c\rangle &  \langle U,d\rangle
\end{pmatrix},
 $$
 where $a$, $b$, $c$, $d$ are the generators of $\SLq$. 
\end{lem}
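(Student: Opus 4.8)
The plan is to recognize both sides of the asserted equality as algebra homomorphisms $\Uq \to \End_\C\bigl(\C[X,Y]_1^q\bigr) \cong M_2(\C)$ and then to check that they agree on a generating set of $\Uq$. Fixing the ordered basis $\{X,Y\}$ to identify $\End_\C\bigl(\C[X,Y]_1^q\bigr)$ with $2$-by-$2$ matrices, define a map $\Phi\colon \Uq \to M_2(\C)$ by $\Phi(U) = \bigl(\begin{smallmatrix} \langle U,a\rangle & \langle U,b\rangle \\ \langle U,c\rangle & \langle U,d\rangle \end{smallmatrix}\bigr)$, so that the statement to prove becomes precisely $\sigma_1(U) = \Phi(U)$ for every $U \in \Uq$.

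The map $\sigma_1$ is an algebra homomorphism simply because it is a representation, being the restriction to $\C[X,Y]_1^q$ of the $\Uq$--action $\sigma$. The key step is to show that $\Phi$ is an algebra homomorphism as well, and this is where the structure of the comultiplication of $\SLq$ enters. Reading off the comultiplication table for $\SLq$ above, one sees that on the generators $\Delta$ has the ``comatrix'' form $\Delta(x_{ij}) = \sum_k x_{ik}\otimes x_{kj}$, where $\bigl(\begin{smallmatrix} x_{11} & x_{12} \\ x_{21} & x_{22}\end{smallmatrix}\bigr) = \bigl(\begin{smallmatrix} a & b \\ c & d\end{smallmatrix}\bigr)$. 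Combining this with the compatibility relation~(\ref{eqn:DualityCompatibleMultComult2}), the $(i,j)$ entry of $\Phi(UV)$ equals $\langle UV, x_{ij}\rangle = \sum_k \langle U, x_{ik}\rangle\,\langle V, x_{kj}\rangle$, which is exactly the $(i,j)$ entry of the matrix product $\Phi(U)\Phi(V)$; hence $\Phi(UV) = \Phi(U)\Phi(V)$. That $\Phi$ is unital follows from $\langle 1, \alpha\rangle = \epsilon(\alpha)$ together with the counit values $\epsilon(a)=\epsilon(d)=1$ and $\epsilon(b)=\epsilon(c)=0$, which give $\Phi(1) = \Id$.

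Next I would check that $\Phi$ and $\sigma_1$ agree on the generators $E$, $F$, $K$. On the one hand, the pairing values of~(\ref{eqn:DualityGenerators}) give directly $\Phi(E) = \bigl(\begin{smallmatrix} 0&1\\0&0\end{smallmatrix}\bigr)$, $\Phi(F) = \bigl(\begin{smallmatrix} 0&0\\1&0\end{smallmatrix}\bigr)$, and $\Phi(K) = \bigl(\begin{smallmatrix} q&0\\0&q^{-1}\end{smallmatrix}\bigr)$. On the other hand, the defining formulas for $\sigma$ give $\sigma_1(E)(X) = \QQI{0}{q}\,X^2Y^{-1} = 0$ and $\sigma_1(E)(Y) = \QQI{1}{q}\,X = X$, so that $\sigma_1(E) = \bigl(\begin{smallmatrix} 0&1\\0&0\end{smallmatrix}\bigr)$; the analogous one-line computations for $F$ and $K$ produce the same matrices as $\Phi$.

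Finally, since $E$, $F$, $K$, $K^{-1}$ generate $\Uq$ as a unital algebra, and since two unital algebra homomorphisms that agree on a generating set must coincide (agreement on $K^{-1}$ being automatic once they agree on the invertible element $K$), we conclude that $\sigma_1(U) = \Phi(U)$ for all $U \in \Uq$. The only step carrying any content is the multiplicativity of $\Phi$, and even there the work is minimal: the real point is the conceptual observation that the comatrix form of $\Delta$ on $\SLq$ is precisely what makes $U \mapsto \bigl(\langle U, x_{ij}\rangle\bigr)$ a matrix representation of $\Uq$. I do not anticipate any genuine obstacle.
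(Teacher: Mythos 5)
Your proof is correct and follows essentially the same route as the paper's: verify the identity on the generators $E$, $F$, $K^{\pm1}$ using~(\ref{eqn:DualityGenerators}), then extend to products via the multiplicativity of $\sigma_1$, the compatibility relation~(\ref{eqn:DualityCompatibleMultComult2}), and the comatrix form of the comultiplication of $\SLq$. You simply make explicit (by introducing $\Phi$ and checking it is a unital algebra homomorphism) what the paper states in one sentence.
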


\begin{proof}
 The property holds for $U=E$, $F$ or $K^{\pm1}$ by inspection in (\ref{eqn:DualityGenerators}), since $\sigma_1(E) = \big(
\begin{smallmatrix}
 0&1\\0&0
\end{smallmatrix}
 \big)$, $\sigma_1(F) = \big(
\begin{smallmatrix}
 0&0\\1&0
\end{smallmatrix}
 \big)$ and $\sigma_1(K) = \big(
\begin{smallmatrix}
 q&0\\0&q^{-1}
\end{smallmatrix}
 \big)$ in the basis $\{X,Y\}$. It then holds for any product of these generators by combining the fact that $\sigma_1$ is an algebra homomorphism, the compatibility of the duality $\langle\ ,\, \rangle$ with the multiplications and comultiplications given by (\ref{eqn:DualityCompatibleMultComult2}), and the definition of the comultiplication $\Delta\colon \SLq \to \SLq \otimes \SLq$.
\end{proof}

We now show that the duality $\langle\ ,\, \rangle$ connects the action of $\Uq$ on $\C[X,Y]^q$ to the coaction of $\SLq$. 

To see this, we first rewrite the action $\sigma \colon \Uq \otimes \C[X,Y]^q \to \C[X,Y]^q $ as a linear map $\Sigma \colon  \C[X,Y]^q \to \Uq^* \otimes \C[X,Y]^q $. 

\begin{lem}
\label{lem:ActionCoactionDualityCompatible}
 The action $\Sigma \colon  \C[X,Y]^q \to \Uq^* \otimes \C[X,Y]^q $, coaction $\tau \colon \C[X,Y]^q \to \SLq \otimes \C[X,Y]^q$ and duality map $\delta \colon \SLq \to \Uq^*$ are related by the property that the diagram
 $$
 \xymatrix{
  \C[X,Y]^q  
  \ar@/^20pt/[rrr]^\Sigma  
  \ar[r]_{\tau\qquad}
  & \SLq \otimes  \C[X,Y]^q \ar[rr]_{\delta\otimes \Id_{\C[X,Y]^q}}
  && \Uq^* \otimes\C[X,Y]^q
 }
 $$
is commutative. Namely,
$
\Sigma=(\delta\otimes \Id_{\C[X,Y]^q}) \circ  \tau
$.
\end{lem}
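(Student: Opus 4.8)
The plan is to unwind the diagram into an identity that can be checked on algebra generators. Recall that $\Sigma$ is obtained from $\sigma$ by currying, so that pairing the $\Uq^*$-factor of $\Sigma(P)$ with an element $U\in\Uq$ returns $\sigma(U\otimes P)$; likewise, pairing the $\Uq^*$-factor of $(\delta\otimes\Id)\circ\tau(P)$ with $U$ returns $(\langle U,-\rangle\otimes\Id)\circ\tau(P)$, since $\delta(\alpha)(U)=\langle U,\alpha\rangle$. Because an element of $\Uq^*$ is determined by its values on $\Uq$, the asserted equality $\Sigma=(\delta\otimes\Id)\circ\tau$ is equivalent to
$$
\sigma(U\otimes P)=(\langle U,-\rangle\otimes\Id)\circ\tau(P)
\qquad\text{for all }U\in\Uq,\ P\in\C[X,Y]^q.
$$
First I would fix $U$, regard both sides as $\C$--linear endomorphisms of $\C[X,Y]^q$, and show that each is built multiplicatively out of its values on the generators $X$ and $Y$.

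For the right-hand side this is essentially formal. The coaction $\tau$ is an algebra homomorphism $\C[X,Y]^q\to\SLq\otimes\C[X,Y]^q$ --- this is exactly the property for which the relations (\ref{eqn:RelationsGL2q}--\ref{eqn:QuantumDet1}) defining $\SLq$ were designed, as explained in \S\ref{sect:SL2q} --- and $\delta$ is multiplicative by the compatibility (\ref{eqn:DualityCompatibleMultComult1}) of the duality with the product of $\SLq$ and the coproduct of $\Uq$. Consequently $(\delta\otimes\Id)\circ\tau$ is an algebra homomorphism into $\Uq^*\otimes\C[X,Y]^q$, where $\Uq^*$ carries the convolution product dual to $\Delta\colon\Uq\to\Uq\otimes\Uq$. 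Concretely, writing $\tau(P)=\sum P_{(-1)}\otimes P_{(0)}$, one obtains the twisted Leibniz rule
$$
(\langle U,-\rangle\otimes\Id)\circ\tau(PQ)=\sum_{(U)}\bigl[(\langle U',-\rangle\otimes\Id)\circ\tau(P)\bigr]\bigl[(\langle U'',-\rangle\otimes\Id)\circ\tau(Q)\bigr].
$$

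The corresponding property for the left-hand side is the one genuinely non-formal input: it is the statement that $\sigma$ makes $\C[X,Y]^q$ into a $\Uq$--module algebra, i.e. that $\sigma(U\otimes PQ)=\sum_{(U)}\sigma(U'\otimes P)\,\sigma(U''\otimes Q)$. I expect this to be the main obstacle, and I would verify it directly on the generators $E$, $F$, $K$ of $\Uq$ and $X$, $Y$ of $\C[X,Y]^q$, using the coproducts $\Delta(E)=E\otimes K+1\otimes E$, $\Delta(F)=F\otimes1+K^{-1}\otimes F$, $\Delta(K)=K\otimes K$ together with the defining action of \S\ref{sect:SL2q&Uq} and the relation $YX=qXY$; this is the familiar fact that the quantum plane is a $\Uq$--module algebra.

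Once both sides satisfy the same twisted Leibniz rule and both treat the unit correctly, namely $\sigma(U\otimes1)=\epsilon(U)=(\langle U,-\rangle\otimes\Id)\circ\tau(1)$, it remains only to check that the two sides agree on $X$ and $Y$, after which an induction on the degree of $P$ --- every monomial being a product of $X$'s and $Y$'s --- finishes the proof. On $X$ and $Y$ the desired identity is precisely the computation of the degree-one representation, namely Lemma~\ref{lem:Duality&FundamentalRepUq}, which identifies the matrix of $\sigma_1(U)$ in the basis $\{X,Y\}$ with the $\langle U,-\rangle$--evaluation of the generator matrix $\bigl(\begin{smallmatrix}a&b\\c&d\end{smallmatrix}\bigr)$ recorded by $\tau$. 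Thus the argument reduces the infinite-dimensional statement to the already-established two-dimensional one, the only real work being the module-algebra verification of the third paragraph.
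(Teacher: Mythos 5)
Your proposal is correct and follows essentially the same route as the paper: reduce to the pointwise identity $\sigma(U\otimes P)=(\langle U,\ \rangle\otimes\Id)\circ\tau(P)$, check it on $X$ and $Y$ via Lemma~\ref{lem:Duality&FundamentalRepUq}, and induct on products using the module-algebra property of $\sigma$ together with the compatibility (\ref{eqn:DualityCompatibleMultComult1}) of the duality with multiplication and comultiplication. The only difference is cosmetic: the paper cites the module-algebra property of the quantum plane from \cite[\S VII.3]{Kas} rather than reverifying it on generators as you propose.
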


\begin{proof}
The property is equivalent to the commutativity of the diagram
$$
\xymatrixcolsep{60pt}
 \xymatrix{
  \Uq \otimes \C[X,Y]^q  
  \ar@/^20pt/[rr]^\sigma  
  \ar[r]_{\Id_{\Uq} \otimes \tau\qquad}
  & \Uq \otimes \SLq \otimes  \C[X,Y]^q
   \ar[r]_{\qquad\qquad\langle\  ,\ \rangle\otimes \Id_{\C[X,Y]^q}}
  &  \C[X,Y]^q
 }
 $$
 
 To simplify the formulas, write
$
 \phi = \Id_{\Uq} \otimes \tau
 $ and $
 \psi= \langle\  ,\, \rangle\otimes \Id_{\C[X,Y]^q}
$.
 We need to prove that 
\begin{equation}
\label{eqn:ActionCoactionDualityCompatible}
\psi \circ \phi \big (U\otimes P(X,Y) \big) = \sigma\big (U\otimes P(X,Y) \big)
\end{equation}
for every $U\in \Uq$ and $P(X,Y)\in \C[X,Y]^q$. 

We first consider the case where $P(X,Y)=X$. Then,
\begin{align*}
  \psi \circ \phi  (U\otimes X)
  &= \psi (U \otimes a \otimes X + U \otimes b \otimes Y)\\
  &=   \langle U , a \rangle  X + \langle U , b \rangle  Y\\
  &= \sigma_1(U)(X) = \sigma(U\otimes X)
\end{align*}
by Lemma~\ref{lem:Duality&FundamentalRepUq}. So, the property of (\ref{eqn:ActionCoactionDualityCompatible}) holds for $P(X,Y)=X$ and every $U\in \Uq$. 

An almost identical argument shows that (\ref{eqn:ActionCoactionDualityCompatible}) holds for $P(X,Y)=Y$ and every $U\in \Uq$. As a consequence,  (\ref{eqn:ActionCoactionDualityCompatible}) holds for every $P(X,Y) \in \C[X,Y]_1^q$ and  $U\in \Uq$.

We now claim that, if  (\ref{eqn:ActionCoactionDualityCompatible}) holds for $P(X,Y)$ and $Q(X,Y)$ and every $U\in \Uq$, then it holds for the product $P(X,Y)Q(X,Y)$ and every $U\in \Uq$. 

A fundamental property of the action $\sigma$ is that, in the terminology of \cite[\S V.6]{Kas},  it makes $\C[X,Y]^q$ a module algebra over $\Uq$. This means that, in addition to making $\C[X,Y]^q$ a module over the algebra $\Uq$,  $\sigma$ satisfies the ``quantum product rule'' that
\begin{equation}
\label{eqn:ModuleAlgebra}
 \sigma  \big (U\otimes P(X,Y)Q(X,Y) \big) 
= \sum_{(U)} \sigma  \big (U'\otimes P(X,Y) \big) \sigma  \big (U''\otimes Q(X,Y) \big) ,
\end{equation}
using Sweedler's notation that $\Delta(U) = \sum_{(U)} U' \otimes U''$. See  \cite[ \S VII.3]{Kas}.

Now, 
\begin{align*}
  \psi \circ \phi  \big (U\otimes P(X,Y)Q(X,Y) \big)
  &= \psi \big(U \otimes P(a \otimes X +  b \otimes Y ) Q(a \otimes X +  b \otimes Y ) \big)\\
  &=  \sum_{(U)} \psi \big(U' \otimes P(a \otimes X +  b \otimes Y ) \big) \psi \big(U'' \otimes  Q(a \otimes X +  b \otimes Y ) \big)\\
  &= \sum_{(U)} \sigma  \big (U'\otimes P(X,Y) \big) \sigma  \big (U''\otimes Q(X,Y) \big) \\
  &= \sigma  \big (U\otimes P(X,Y)Q(X,Y) \big) ,
 \end{align*}
where the second equality comes from (\ref{eqn:DualityCompatibleMultComult1}), the third equality reflects our hypothesis that $P(X,Y)$ and $Q(X,Y)$ satisfy (\ref{eqn:ActionCoactionDualityCompatible}) for every $U'''\in \Uq$, and the fourth equality results from (\ref{eqn:ModuleAlgebra}). This proves our claim that (\ref{eqn:ActionCoactionDualityCompatible}) holds for $P(X,Y)Q(X,Y)$ and for every $U\in \Uq$.

This inductive step proves that (\ref{eqn:ActionCoactionDualityCompatible}) holds in all cases, and concludes the proof of Lemma~\ref{lem:ActionCoactionDualityCompatible}. 
\end{proof}

\subsection{The Clebsch-Gordan Decomposition for $\SLq$ and $\SLq(\A)$}
\label{sect:ClebschGordan}

 A great feature of Hopf algebras is that their comultiplication $\Delta$ enables one to take the tensor product of two representations. 

For $\Uq$, the \emph{Quantum Clebsch-Gordan Decomposition} expresses the action of $\Uq$ on the tensor product $\C[X,Y]^q_m \otimes \C[X,Y]^q_n$ as a direct sum of (irreducible) representations over $\C[X,Y]^q_{m+n-2k}$ with $0\leq k \leq \inf\{m,n\}$. See \cite {KoK, KiR, Vak}, and \cite[\S VII.7]{Kas}. 

We state here the result for the case we need, when $m=1$.  Recall that the action of $\Uq$ on $\C[X,Y]^q$ restricts to an algebra homomorphism $ \sigma_n \colon \Uq \to \End_\C\big( \C[X,Y]_n^q \big)$ for every $n$.  Also, the tensor product 
 $$\sigma_1 \otimes \sigma_n \colon \Uq \to \End_\C \big(  \C[X,Y]^q_1 \otimes \C[X,Y]^q_n \big) = \End_\C \big(  \C[X,Y]^q_1\big)  \otimes   \End_\C \big(  \C[X,Y]^q_n \big) $$
  is defined by the property that 
  $$
  \sigma_1 \otimes \sigma_n(U) = \sum_{(U)} \sigma_1(U') \otimes \sigma_n(U'')
  $$
  for every $U\in \Uq$, using Sweedler's notation that $\Delta(U) = \sum_{(U)} U' \otimes U''$. 

\begin{thm}
 [Clebsch-Gordan Decomposition for $\Uq$]
 \label{thm:ClebschGordanUq}
 When $q$ is not a $k$--root of unity with $k\leq n$, there exists a $\C$--linear isomorphism $\phi \colon \C[X,Y]^q_1 \otimes \C[X,Y]^q_n \to \C[X,Y]^q_{n+1} \oplus \C[X,Y]^q_{n-1}$ such that the diagram
 $$
 \xymatrix{
 \C[X,Y]^q_1 \otimes \C[X,Y]^q_n 
 \ar[rrr]^{\sigma_1 \otimes \sigma_n(U)} 
 \ar[d]_\phi^\cong
 &&&   \C[X,Y]^q_1 \otimes \C[X,Y]^q_n 
 \ar[d]^\phi_\cong
 \\
 \C[X,Y]^q_{n+1} \oplus \C[X,Y]^q_{n-1}  \ar[rrr]_{\sigma_{n+1}(U) \oplus \sigma_{n-1}(U)} 
 &&& \C[X,Y]^q_{n+1} \oplus \C[X,Y]^q_{n-1}
 }
 $$
 commutes for every $U\in \Uq$. \qed
 \end{thm}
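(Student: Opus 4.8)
The plan is to argue by highest-weight theory, exactly as in the classical $\mathfrak{sl}_2$ case but keeping track of quantum integers. Write $W=\C[X,Y]^q_1 \otimes \C[X,Y]^q_n$, with the $\Uq$--action $\sigma_1\otimes\sigma_n$ determined by the comultiplication of $\Uq$. Since $\Delta(K)=K\otimes K$, the element $K$ acts diagonally on the tensor basis, the vector $X^aY^{1-a}\otimes X^{n-j}Y^j$ (with $a\in\{0,1\}$ and $0\leq j\leq n$) being an eigenvector of ``weight'' $q^{(2a-1)+(n-2j)}$. Tabulating these $2(n+1)$ weights shows that they range over $q^{n+1},q^{n-1},\dots,q^{-(n+1)}$, that the two extreme weights $q^{\pm(n+1)}$ each occur once, and that every intermediate weight occurs exactly twice. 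This already matches the weight multiplicities of $\sigma_{n+1}\oplus\sigma_{n-1}$, which is the target of the decomposition.

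First I would construct two highest-weight vectors, that is, two vectors annihilated by $E$. The weight-$q^{n+1}$ space is the line spanned by $w_+=X\otimes X^n$, and a direct computation with $\Delta(E)=E\otimes K+1\otimes E$ gives $(\sigma_1\otimes\sigma_n)(E)\,w_+=0$. Inside the $2$--dimensional weight-$q^{n-1}$ space I would then look for a second highest-weight vector $w'_+=\alpha\,(X\otimes X^{n-1}Y)+\beta\,(Y\otimes X^n)$; expanding $(\sigma_1\otimes\sigma_n)(E)$ with the comultiplication reduces the condition $E\,w'_+=0$ to a single linear relation between $\alpha$ and $\beta$, whose solution space is one-dimensional. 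Thus $w'_+$ exists and is unique up to scalar.

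Next I would identify the submodules these vectors generate. Applying the lowering operator $F$ (via $\Delta(F)=F\otimes1+K^{-1}\otimes F$) repeatedly to $w_+$, respectively $w'_+$, produces descending chains of weight vectors; under the hypothesis on $q$ the quantum integers $\QQI{j}{q}$ occurring as the structure constants of these chains are nonzero in the relevant range, so no vector in a chain vanishes prematurely and each chain spans an irreducible highest-weight module. By the standard classification of finite-dimensional $\Uq$--modules for such $q$ (see \cite[\S VI.2]{Kas}), these are $\Uq\cdot w_+\cong\sigma_{n+1}$, of dimension $n+2$, and $\Uq\cdot w'_+\cong\sigma_{n-1}$, of dimension $n$. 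Being irreducible of different dimensions $n+2\neq n$, they are non-isomorphic, hence meet only in $0$; and their dimensions add up to $(n+2)+n=2(n+1)=\dim_\C W$, so $W=\Uq\cdot w_+\oplus\Uq\cdot w'_+$. Taking $\phi$ to be the direct sum of the two $\Uq$--isomorphisms onto $\C[X,Y]^q_{n+1}$ and $\C[X,Y]^q_{n-1}$ (normalized, say, so that $w_+\mapsto X^{n+1}$ and $w'_+\mapsto X^{n-1}$), the required square commutes for every $U\in\Uq$ precisely because $\phi$ is $\Uq$--linear by construction.

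I expect the main obstacle to be this middle step: verifying that the $F$--chains from $w_+$ and $w'_+$ do not degenerate and that the submodules they generate are the \emph{full} irreducibles $\sigma_{n+1}$ and $\sigma_{n-1}$. This is exactly where the hypothesis that $q$ is not a $k$--root of unity with $k\leq n$ is essential, since it guarantees the non-vanishing of the quantum integers $\QQI{j}{q}$ that control both the length of the chains and the irreducibility of $\sigma_{n\pm1}$; without it the weight-space bookkeeping would still hold but the module could fail to split as a direct sum. Granting (or citing from \cite{Kas}) the irreducibility of the $\sigma_m$ for such $q$, the remainder is routine bookkeeping with weights and dimensions.
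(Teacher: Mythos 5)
The paper offers no proof of this statement at all: it is quoted as the classical Quantum Clebsch--Gordan Decomposition, with the end-of-proof mark placed directly after the statement and references to Kirillov--Reshetikhin, Koelink--Koornwinder, Vaksman and \cite[\S VII.7]{Kas}. Your self-contained highest-weight argument is therefore necessarily a different route, and it is the standard one. The steps you carry out explicitly all check out: the weight multiplicities of $K$ on the tensor basis, the vanishing of $(\sigma_1\otimes\sigma_n)(E)$ on $X\otimes X^n$, and the one-dimensionality of the space of highest-weight vectors of weight $q^{n-1}$ (the linear condition is $\alpha+q^{n}\beta=0$). The identification of $\Uq\cdot w_+$ with $\sigma_{n+1}$ rather than its sign-twisted cousin $\sigma_{n+1}'$ is also forced, because the highest weight is $+q^{n+1}$; it would be worth saying this explicitly, since the classification you cite distinguishes the two families precisely by that sign.

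The one step that does not survive scrutiny under the \emph{literal} hypothesis is the claim that ``$q$ not a $k$--root of unity with $k\leq n$'' forces the relevant quantum integers to be nonzero. One has $\QQI{j}{q}=0$ exactly when $q^{2j}=1$ (with $q^{2}\neq1$), and this is compatible with $q^{k}\neq1$ for all $k\leq n$: take $n=1$ and $q=\mathrm i$, so that $\QQI{2}{q}=q+q^{-1}=0$. There $F^{2}(X\otimes X)=\QQI{2}{q}\,Y\otimes Y=0$, your chain from $w_+$ terminates after two terms, and in fact $\sigma_1\otimes\sigma_1$ decomposes into two $2$--dimensional indecomposables while $\sigma_2\oplus\sigma_0$ contains the $3$--dimensional indecomposable $\sigma_2$, so the conclusion of Theorem~\ref{thm:ClebschGordanUq} itself fails. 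This is a defect of the hypothesis as stated (Kassel assumes $q$ is not a root of unity at all) rather than of your strategy, and it is harmless for the paper, which only invokes the theorem for $q$ not a root of unity before extending Corollary~\ref{cor:Traces&ChebyshevSecondKind} to roots of unity by continuity. Your proof is complete and correct in that generic case; just replace ``the hypothesis guarantees the non-vanishing of $\QQI{j}{q}$'' by the accurate condition $q^{2j}\neq1$ for $j\leq n+1$, and note the discrepancy with the stated hypothesis.
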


 We now consider the coaction $\tau \colon  \C[X,Y]^q \to \SLq \otimes  \C[X,Y]^q$, and more precisely its restriction $\tau_n \colon  \C[X,Y]^q_n \to \SLq \otimes  \C[X,Y]^q_n$. 
 
 Tensor products of coactions are much simpler to define, and
 $$
 \tau_1 \otimes \tau_n  \big(P\otimes Q \big) =  \tau_1 (P) \otimes \tau_n(Q) \in  \SLq \otimes \C[X,Y]^q_1 \otimes \C[X,Y]^q_n 
 $$
for every $P \in  \C[X,Y]^q_1 $ and $Q\in  \C[X,Y]^q_n $.

We now use the duality $\langle\ ,\,\rangle$ to deduce the following result from Theorem~\ref{thm:ClebschGordanUq}. 

\begin{prop}
  [Clebsch-Gordan Decomposition for $\SLq$]
  \label{prop:ClebschGordanSL2q}
  Suppose that  $q$ is not a $k$--root of unity with $k\leq n$, and consider 
 the $\C$--linear isomorphism $\phi \colon \C[X,Y]^q_1 \otimes \C[X,Y]^q_n \to \C[X,Y]^q_{n+1} \oplus \C[X,Y]^q_{n-1}$ of Theorem~{\upshape \ref{thm:ClebschGordanUq}}. Then,  the diagram
 $$
 \xymatrix{
 \C[X,Y]^q_1 \otimes \C[X,Y]^q_n 
 \ar[rr]^{\tau_1 \otimes \tau_n\quad} 
 \ar[d]_\phi^\cong
 &&   \SLq\otimes \C[X,Y]^q_1 \otimes \C[X,Y]^q_n 
 \ar[d]^{\Id_{\SLq}\otimes\phi}_\cong\\
 \C[X,Y]^q_{n+1} \oplus \C[X,Y]^q_{n-1}  
 \ar[rr]_{\tau_{n+1} \oplus \tau_{n-1}\quad} 
 && \SLq\otimes \C[X,Y]^q_{n+1} \oplus \C[X,Y]^q_{n-1}
 }
 $$
 commutes, in the sense that $(\Id_{\SLq}\otimes\phi) \circ (\tau_1 \otimes \tau_n) = (\tau_{n+1} \oplus \tau_{n-1}) \circ \phi $
\end{prop}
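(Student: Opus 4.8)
The plan is to exploit the injectivity of the duality map $\delta \colon \SLq \to \Uq^*$ (Lemma~\ref{lem:Takeuchi}) to reduce the desired statement about the coactions $\tau$ to the corresponding statement about the actions $\sigma$, which is precisely Theorem~\ref{thm:ClebschGordanUq}. Write $W = \C[X,Y]^q_1 \otimes \C[X,Y]^q_n$ and $V = \C[X,Y]^q_{n+1} \oplus \C[X,Y]^q_{n-1}$, and set $f = (\Id_{\SLq}\otimes\phi) \circ (\tau_1 \otimes \tau_n)$ and $g = (\tau_{n+1} \oplus \tau_{n-1}) \circ \phi$, so that the proposition asserts $f = g$ as maps $W \to \SLq \otimes V$. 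Since $\C$ is a field and $\delta$ is injective, the map $\delta \otimes \Id_V$ is injective, so it suffices to prove that $(\delta \otimes \Id_V) \circ f = (\delta \otimes \Id_V) \circ g$. As $V$ is finite dimensional, this in turn can be checked after pairing the $\Uq^*$--factor against an arbitrary $U \in \Uq$, reducing everything to the equality of two resulting maps $W \to V$ for every $U$.

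The computation on the $g$--side is immediate: because $\langle U, - \rangle$ acts on the $\SLq$--factor only, Lemma~\ref{lem:ActionCoactionDualityCompatible} (applied separately in degrees $n+1$ and $n-1$) gives that pairing $(\tau_{n+1}\oplus\tau_{n-1})(\phi(w))$ against $U$ yields $(\sigma_{n+1}\oplus\sigma_{n-1})(U)\bigl(\phi(w)\bigr)$. On the $f$--side, since $\langle U,-\rangle$ and $\phi$ act on disjoint tensor factors they commute, so pairing $f(w)$ against $U$ yields $\phi\bigl((\langle U,-\rangle\otimes\Id_W)\circ(\tau_1\otimes\tau_n)(w)\bigr)$. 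The crux is therefore to identify $(\langle U,-\rangle\otimes\Id_W)\circ(\tau_1\otimes\tau_n)$ with the tensor-product action $(\sigma_1\otimes\sigma_n)(U)$.

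This identification is the heart of the argument, and is where the compatibility relation~(\ref{eqn:DualityCompatibleMultComult1}) between the duality and the multiplication of $\SLq$ enters. Writing $\tau_1(P) = \sum P_{(0)}\otimes P_{(1)}$ and $\tau_n(Q) = \sum Q_{(0)}\otimes Q_{(1)}$, the tensor-product coaction combines the two $\SLq$--components by multiplication, so pairing against $U$ produces $\sum \langle U, P_{(0)}Q_{(0)}\rangle\, P_{(1)}\otimes Q_{(1)}$. Expanding $\langle U, P_{(0)}Q_{(0)}\rangle = \sum_{(U)}\langle U', P_{(0)}\rangle\langle U'', Q_{(0)}\rangle$ via~(\ref{eqn:DualityCompatibleMultComult1}) and regrouping, this becomes $\sum_{(U)}\sigma_1(U')(P)\otimes\sigma_n(U'')(Q)$ by Lemma~\ref{lem:ActionCoactionDualityCompatible}, which is exactly $(\sigma_1\otimes\sigma_n)(U)(P\otimes Q)$. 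Thus the $f$--side pairing equals $\phi\bigl((\sigma_1\otimes\sigma_n)(U)(w)\bigr)$.

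Comparing the two sides, the proposition reduces to $\phi\circ(\sigma_1\otimes\sigma_n)(U) = (\sigma_{n+1}\oplus\sigma_{n-1})(U)\circ\phi$ for every $U\in\Uq$, which is precisely the commutativity asserted by Theorem~\ref{thm:ClebschGordanUq}. I expect the main obstacle to be the bookkeeping in this crux step: one must keep track of the fact that the tensor product of comodules combines the $\SLq$--factors by \emph{multiplication} while the tensor product of modules combines the $\Uq$--factors via \emph{comultiplication}, and recognize that~(\ref{eqn:DualityCompatibleMultComult1}) is exactly the compatibility making these two operations correspond under $\delta$. Everything else is a formal consequence of the injectivity of $\delta$ and of Lemma~\ref{lem:ActionCoactionDualityCompatible}.
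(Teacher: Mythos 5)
Your proposal is correct and follows essentially the same route as the paper: reduce via the injectivity of $\delta$ (Lemma~\ref{lem:Takeuchi}) and pairing against $U\in\Uq$, handle the direct-sum side directly with Lemma~\ref{lem:ActionCoactionDualityCompatible}, and isolate as the crux the identification of the paired tensor-product coaction with $(\sigma_1\otimes\sigma_n)(U)$ via Relation~(\ref{eqn:DualityCompatibleMultComult1}) --- which is exactly the paper's Lemma~\ref{lem:TensorProdCompatibleDuality} --- before invoking Theorem~\ref{thm:ClebschGordanUq}. The only difference is presentational: the paper organizes the same steps as a diagram chase (outer rectangle plus two triangles), while you carry out the pointwise computation.
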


\begin{proof} To simplify the notation, set $V_k = \C[X,Y]^q_k$. Then the commutative diagram of Lemma~\ref{lem:ActionCoactionDualityCompatible} restricts for each $k$ to a commutative diagram 
\begin{equation}
\label{eqn:ClebschGordanSL2q1}
 \xymatrix{
  V_k  
  \ar@/^20pt/[rrrr]^{\Sigma_k}  
  \ar[rr]_{ \tau_k}
  &&  \SLq \otimes  V_k \ar[rr]_{\delta \otimes \Id_{V_k}}
 & &\Uq^*\otimes V_k
 }
\end{equation}
 where $\Sigma_k$ is related to the action $\sigma_k \colon \Uq \to \End_\C(V_k)$  by the property that, for each $U\in \Uq$ and $P \in V_k$, the element $\sigma_k(U)(P) \in V_k$ is obtained by evaluating $\Sigma_k(P) \in \Uq^* \otimes V_k$ at $U$. 

In the diagram 
\begin{equation}
\label{eqn:ClebschGordanSL2q2}
\xymatrix{
V_1 \otimes V_n 
\ar@/^20pt/[rrrr]^{\Sigma_1 \otimes \Sigma_n}
\ar[rr]_{\tau_1 \otimes \tau_n\quad}
\ar[d]_\phi
&& \SLq \otimes V_1 \otimes V_n 
\ar[rr]_{\delta \otimes \Id_{V_1 \otimes V_n}\quad}
\ar[d]^{\Id_{\SLq}\otimes \phi}
&& \Uq^* \otimes V_1 \otimes V_n 
\ar[d]^{\Id_{\Uq^*}\otimes \phi}
\\
V_{n+1} \oplus V_{n-1} 
\ar@/_20pt/[rrrr]_{\Sigma_{n+1} \oplus \Sigma_{n-1}}
\ar[rr]^{\tau_{n+1} \oplus \tau_{n-1}\quad}
&& \SLq \otimes (V_{n+1} \oplus V_{n-1} )
\ar[rr]^{\delta\otimes \Id_{V_{n+1} \oplus V_{n-1}}\quad}
&& \Uq^* \otimes (V_{n+1} \oplus V_{n-1} )
}
\end{equation}
we want to show that the left-hand square commutes. Here,  $\Sigma_1 \otimes \Sigma_n$ is defined  to be related to the tensor product $\sigma_1 \otimes \sigma_n \colon \Uq \to \End_\C(V_1 \otimes V_n)$ by the property that, for each $U\in \Uq$, $P_1 \in V_1$ and $P_n \in V_n$, the element $\sigma_1 \otimes \sigma_n(U)(P_1 \otimes P_n) \in V_1 \otimes V_n$ is obtained by evaluating $\Sigma_1 \otimes \Sigma_n(P_1 \otimes P_n) \in \Uq^* \otimes V_1 \otimes V_n$ at $U$. The map $\Sigma_{n+1} \oplus \Sigma_{n-1}$ is similarly associated to the direct sum  $\sigma_{n+1} \oplus \sigma_{n-1} \colon \Uq \to \End_\C(V_{n+1} \oplus V_{n-1})$.

Because of the way $\Sigma_1 \otimes \Sigma_n$ and $\Sigma_{n+1} \oplus \Sigma_{n-1}$ are respectively associated to $\sigma_1 \otimes  \sigma_n$ and   $\sigma_{n+1} \oplus \sigma_{n-1}$, Theorem~\ref{thm:ClebschGordanUq} shows that the outer rectangle of  (\ref{eqn:ClebschGordanSL2q2}) commutes, in the sense that
$$
(\Id_{\Uq^*}\otimes \phi) \circ (\Sigma_1 \otimes \Sigma_n) = (\Sigma_{n+1} \oplus \Sigma_{n-1}) \circ \phi.
$$

The lower triangle 
\begin{equation*}
\xymatrix{
V_{n+1} \oplus V_{n-1} 
\ar@/_20pt/[rrrr]_{\Sigma_{n+1} \oplus \Sigma_{n-1}}
\ar[rr]^{\tau_{n+1} \oplus \tau_{n-1}\qquad}
&& \SLq \otimes (V_{n+1} \oplus V_{n-1} )
\ar[rr]^{\delta\otimes \Id_{V_{n+1} \oplus V_{n-1}}\quad}
&& \Uq^* \otimes (V_{n+1} \oplus V_{n-1} )
}
\end{equation*}
of (\ref{eqn:ClebschGordanSL2q2}) commutes by an immediate application of (\ref{eqn:ClebschGordanSL2q1}). 

The commutativity of the upper triangle requires more thought, because  tensor products of actions of algebras are more complicated than direct sums.

\begin{lem}
\label{lem:TensorProdCompatibleDuality}
 The diagram
 \begin{equation*}
\xymatrix{
V_1 \otimes V_n 
\ar@/^20pt/[rrrr]^{\Sigma_1 \otimes \Sigma_n}
\ar[rr]_{\tau_1 \otimes \tau_n\quad}
&& \SLq \otimes V_1 \otimes V_n 
\ar[rr]_{\delta \otimes \Id_{V_1 \otimes V_n}\quad}
&& \Uq^* \otimes V_1 \otimes V_n 
}
\end{equation*}
 commutes.
\end{lem}

\begin{proof} Because of the relationships between $\Sigma_1 \otimes \Sigma_n$ and the action $\sigma_1 \otimes \sigma_n$, and between the map $\delta \colon \SLq \to \Uq^*$ and the duality $\langle\ , \, \rangle \colon \Uq \otimes \SLq \to \C$, it suffices to show that
 \begin{equation}
 \label{eqn:ClebschGordanSL2q3}
\xymatrix{
V_1 \otimes V_n 
\ar@/^20pt/[rrrr]^{\sigma_1 \otimes \sigma_n(U)}
\ar[rr]_{\tau_1 \otimes \tau_n\quad}
&& \SLq \otimes V_1 \otimes V_n 
\ar[rr]_{\qquad\langle U, \ \rangle \quad}
&&   V_1 \otimes V_n 
}
\end{equation}
commutes for every $U\in \Uq$. Here, for a vector space $V$, we shorten the notation and  write $\langle U, \ \rangle \colon \SLq \otimes V \to V$ for the map that we previously denoted by $\langle U, \ \rangle \otimes \Id_V$. 

This property is an immediate consequence of the fact (\ref{eqn:DualityCompatibleMultComult1}--\ref{eqn:DualityCompatibleMultComult2}) that $\langle\ , \, \rangle$ establishes a duality between multiplications and comultiplications. 
Indeed, given two polynomials $P_1 \in V_1$ and $P_n \in V_n$,
\begin{align*}
\sigma_1 \otimes \sigma_n(U)(P_1 \otimes P_n) &= \sum_{(U)} \sigma_1(U')(P_1) \otimes \sigma_n(U'') (P_n)\\
 &=  \sum_{(U)} \langle U', \tau_1(P_1)\rangle \otimes \langle U'', \tau_n(P_n) \rangle \\
 &= \langle U, \tau_1(P_1) \otimes \tau_n(P_n) \rangle 
\end{align*}
 where the first equality reflects the definition of $\sigma_1 \otimes \sigma_2$, the second equality comes from Lemma~\ref{lem:ActionCoactionDualityCompatible} (or (\ref{eqn:ClebschGordanSL2q1})), and the third equality follows from (\ref{eqn:DualityCompatibleMultComult1}). 
 
 The proves the commutativity of (\ref{eqn:ClebschGordanSL2q3}), and therefore Lemma~\ref{lem:TensorProdCompatibleDuality}. 
\end{proof}
 
 We are now ready to conclude the proof of Proposition~\ref{prop:ClebschGordanSL2q}. We proved that, in the diagram (\ref{eqn:ClebschGordanSL2q2}), the outer rectangle and the two upper and lower triangles commute. By Lemma~\ref{lem:Takeuchi}, the map $\delta \colon \SLq \to \Uq^*$ is injective. It easily follows that the left-hand square
 \begin{equation*}
\xymatrix{
V_1 \otimes V_n 
\ar[rr]_{\tau_1 \otimes \tau_n\quad}
\ar[d]^\phi
&& \SLq \otimes V_1 \otimes V_n 
\ar[d]^{\Id_{\SLq}\otimes \phi}
\\
V_{n+1} \oplus V_{n-1} 
\ar[rr]^{\tau_{n+1} \oplus \tau_{n-1}\quad}
&& \SLq \otimes (V_{n+1} \oplus V_{n-1} )
}
\end{equation*}
 commutes. This is exactly what we needed to prove. 
\end{proof}

After this long digression through the Hopf algebras $\SLq$ and $\Uq$, we now return to our original topic of interest, namely the set $\SLq(\A)$ of $\A$--points of $\SLq$ for some algebra $\A$. 

The representation $ \rho_n\colon \SLq(\A) \to \End_\A (\A[X,Y]^q_n) $ is related to the coaction
$\tau_n \colon \C[X,Y]_n^q \to \SLq \otimes \C[X,Y]_n^q$ 
by the property that, if an $\A$--point $A \in \SLq(\A)$ is considered as an algebra homomorphism $A \colon \SLq \to \A$, then $\rho_n(A) \in \End_\A (\A[X,Y]_n^q)$ is  the $\A$--linear extension of the $\C$--linear map $ \big(A \otimes \Id_{\C[X,Y]_n^q}\big) \circ \tau \colon \C[X,Y]_n^q \to \A[X,Y]_n^q$. 

\begin{prop}
\label{prop:ClebschGordanSL2q(A)}
 When $q$ is not a $k$--root of unity with $k\leq n$, the representation $$\rho_1 \otimes_\A \rho_n \colon \SLq(\A) \to \End_\A \big( \A[X,Y]_1^q \otimes_\A \A[X,Y]_n^q \big)$$ is isomorphic  over $\C$ to the direct sum $\rho_{n+1} \oplus \rho_{n-1}$ of the representations $\rho_{n+1} \colon \SLq(\A) \to \End \big(  \A[X,Y]_{n+1}^q \big)$ and $\rho_{n-1} \colon \SLq(\A) \to \End \big(  \A[X,Y]_{n-1}^q \big)$. Namely, there exists a $\C$--linear isomorphism
$$
\phi \colon   \C[X,Y]_1^q \otimes_\C \C[X,Y]_n^q  \to \C[X,Y]_{n+1}^q  \oplus \C[X,Y]_{n-1}^q  ,
$$
inducing an $\A$--linear isomorphism
$$
\Id_\A \otimes_\C \phi \colon   \A[X,Y]_2^q \otimes_\A \A[X,Y]_n^q \to  \A[X,Y]_{n+1}^q  \oplus \A[X,Y]_{n-1}^q   ,
$$
such that the diagram
$$
\xymatrix{
 \A[X,Y]_1^q \otimes_\A \A[X,Y]_n^q 
 \ar[d]_{\Id_\A \otimes_\C \phi}^\cong
 \ar[rrr]^{\rho_1(A) \otimes_\A \rho_n(A)}
& &&  \A[X,Y]_1^q \otimes_\A \A[X,Y]_n^q 
 \ar[d]^{\Id_\A \otimes_\C \phi}_\cong
 \\
 \A[X,Y]_{n+1}^q  \oplus \A[X,Y]_{n-1}^q
 \ar[rrr]_{\rho_{n+1}(A) \oplus \rho_{n-1}(A)}
 &&&  \A[X,Y]_{n+1}^q  \oplus \A[X,Y]_{n-1}^q
}
$$
commutes for every $A\in \SLq(\A)$. 
\end{prop}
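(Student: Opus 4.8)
The plan is to deduce this statement about $\A$-points directly from the coaction-level Clebsch-Gordan decomposition of Proposition~\ref{prop:ClebschGordanSL2q}, by applying the algebra homomorphism $A \colon \SLq \to \A$ associated with an arbitrary $\A$-point $A \in \SLq(\A)$. Throughout I abbreviate $V_k = \C[X,Y]_k^q$, so that $\A[X,Y]_k^q = \A \otimes V_k$, and I take for $\phi$ the same $\C$-linear isomorphism provided by Proposition~\ref{prop:ClebschGordanSL2q}; the asserted $\A$-linear isomorphism is then $\Id_\A \otimes \phi$.

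First I would record the precise relationship between the representations $\rho_k$ and the coactions $\tau_k$ recalled just above the statement: for each $k$, the endomorphism $\rho_k(A)$ is the $\A$-linear extension of the $\C$-linear map $(A \otimes \Id_{V_k}) \circ \tau_k \colon V_k \to \A \otimes V_k$. The crucial observation is then that the $\A$-linear tensor product $\rho_1(A) \otimes_\A \rho_n(A)$ is likewise the $\A$-linear extension of $(A \otimes \Id_{V_1 \otimes V_n}) \circ (\tau_1 \otimes \tau_n)$. Indeed, writing $\tau_1(P) = \sum_i \alpha_i \otimes P_i$ and $\tau_n(Q) = \sum_j \beta_j \otimes Q_j$, the comodule tensor product of the coactions is $\tau_1 \otimes \tau_n(P \otimes Q) = \sum_{i,j} \alpha_i \beta_j \otimes P_i \otimes Q_j$, so that applying $A \otimes \Id$ gives $\sum_{i,j} A(\alpha_i) A(\beta_j) \otimes P_i \otimes Q_j$ because $A$ is an algebra homomorphism; this is exactly the value of $\rho_1(A) \otimes_\A \rho_n(A)$ on $P \otimes Q$, where the tensor product over $\A$ merges the two $\A$-factors by multiplication. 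The identical bookkeeping shows that $\rho_{n+1}(A) \oplus \rho_{n-1}(A)$ is the $\A$-linear extension of $(A \otimes \Id) \circ (\tau_{n+1} \oplus \tau_{n-1})$.

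With these identifications in hand, I would apply $A \otimes \Id$ to both sides of the identity $(\Id_{\SLq} \otimes \phi) \circ (\tau_1 \otimes \tau_n) = (\tau_{n+1} \oplus \tau_{n-1}) \circ \phi$ furnished by Proposition~\ref{prop:ClebschGordanSL2q}. Using the elementary interchange $(A \otimes \Id) \circ (\Id_{\SLq} \otimes \phi) = (\Id_\A \otimes \phi) \circ (A \otimes \Id)$, the left-hand side becomes $(\Id_\A \otimes \phi)$ composed with the restriction of $\rho_1(A) \otimes_\A \rho_n(A)$ to $V_1 \otimes V_n$, while the right-hand side becomes the restriction of $\rho_{n+1}(A) \oplus \rho_{n-1}(A)$ composed with $\phi$. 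This is precisely the commutativity of the desired square, but checked only on the subspace $1 \otimes (V_1 \otimes V_n)$ of $\A[X,Y]_1^q \otimes_\A \A[X,Y]_n^q$. Since all four maps in that square are $\A$-linear and this subspace generates the domain as an $\A$-module, the full square commutes, which is what we want.

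The step I expect to require the most care is the identification of $\rho_1(A) \otimes_\A \rho_n(A)$ with the $\A$-linear extension of $(A \otimes \Id) \circ (\tau_1 \otimes \tau_n)$: one must verify that forming the tensor product over $\A$ of the two representations corresponds, under $A$, exactly to the comodule tensor product $\tau_1 \otimes \tau_n$, which multiplies the two $\SLq$-factors together. This is the only place where the algebra-homomorphism property of $A$ is genuinely used; once it is settled, the remainder is a formal diagram chase followed by the appeal to $\A$-linearity.
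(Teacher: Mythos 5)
Your proposal is correct and follows exactly the route the paper takes: the paper's proof simply declares the statement an immediate consequence of Proposition~\ref{prop:ClebschGordanSL2q} together with the relationship between the coactions $\tau_k$ and the maps $\rho_k(A)$, and your argument fills in precisely those details (in particular the identification of $\rho_1(A)\otimes_\A\rho_n(A)$ with the $\A$-linear extension of $(A\otimes\Id)\circ(\tau_1\otimes\tau_n)$ via the algebra-homomorphism property of $A$).
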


\begin{proof}
 This is an immediate consequence of Proposition~\ref{prop:ClebschGordanSL2q} and of the relationship between the coactions $\tau_k \colon \C[X,Y]_k^q \to \SLq \otimes_\C \C[X,Y]_k^q $ and the $\A$--linear maps  $\rho_k(A) \in \End_\A (\A[X,Y]_k^q)$. 
\end{proof}

The fact that the isomorphism $  \A[X,Y]_1^q \otimes_\A \A[X,Y]_n^q  \to \A[X,Y]_{n+1}^q  \oplus \A[X,Y]_{n-1}^q $ between $\rho_1 \otimes_\A \rho_n$ and $\rho_{n+1} \oplus \rho_{n-1}$  comes from a $\C$--linear isomorphism  $  \C[X,Y]_1^q \otimes_\C \C[X,Y]_n^q  \to \C[X,Y]_{n+1}^q  \oplus \C[X,Y]_{n-1}^q $  will be crucial for our consideration of traces in the next section. 

\subsection{The trace of $\rho_n(A)$ for $A\in \SLq(\A)$} After the hard work of \S \ref{sect:SL2q&Uq} and \S \ref{sect:ClebschGordan}, we now have appropriate tools to compute for $A\in \SLq(\A)$ the trace of $\rho_n(A)$ in terms of the trace of $A$. The few computations that we did at the end of \S \ref{sect:Traces} should convince the reader that this result would be hard to obtain without the heavy machinery of \S\S  \ref{sect:SL2q&Uq}--\ref{sect:ClebschGordan}. 

\begin{cor}
\label{cor:Traces&ChebyshevSecondKind}
For every $
\big(
\begin{smallmatrix}
 a&b\\c&d
\end{smallmatrix}
\big) 
\in \SLq(\A)$,
$$
\Tr \rho_n
\big(
\begin{smallmatrix}
 a&b\\c&d
\end{smallmatrix}
\big) 
=
S_n (a+d)
$$
where $S_n$ is the $n$--th Chebyshev polynomial of the second kind. 
\end{cor}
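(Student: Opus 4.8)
The plan is to prove the \emph{universal} identity $\Tr \rho_n\big(\begin{smallmatrix} a&b\\c&d\end{smallmatrix}\big) = S_n(a+d)$ inside the Hopf algebra $\SLq$ itself, taking $\A = \SLq$ and $A = \Id_{\SLq}$, and then to recover the general case by functoriality. Indeed, writing $\tau_n(f_j) = \sum_l t_{lj} \otimes f_l$ for the coaction in a fixed $\C$-basis $\{f_j\}$ of $\C[X,Y]_n^q$, one has $\rho_n(A)(f_j) = \sum_l A(t_{lj}) f_l$ for every $\A$-point $A \colon \SLq \to \A$, so that $\Tr \rho_n(A) = A\bigl(\sum_j t_{jj}\bigr) = A\bigl(\Tr \rho_n(\Id_{\SLq})\bigr)$. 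Since $A$ is an algebra homomorphism and $S_n$ has integer coefficients, the universal identity $\sum_j t_{jj} = S_n(a+d)$ in $\SLq$ immediately gives $\Tr \rho_n(A) = S_n(A(a) + A(d))$, which is the claim. To prove the universal identity I will set $u_n = \Tr \rho_n(\Id_{\SLq}) \in \SLq$ and show that the $u_n$ satisfy the same recurrence~(\ref{eqn:Chebyshev2Def}) and the same initial conditions as the $S_n(a+d)$.

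The recurrence is where Proposition~\ref{prop:ClebschGordanSL2q(A)} does the work. Suppose first that $q$ is not a root of unity, so that the proposition applies for every $n$ and provides a \emph{$\C$-linear} isomorphism intertwining $\rho_1 \otimes_\A \rho_n$ with $\rho_{n+1} \oplus \rho_{n-1}$. Because our trace is invariant under $\C$-linear changes of basis (Lemma~\ref{lem:TraceInvariant}), this yields $\Tr\bigl(\rho_1(A) \otimes_\A \rho_n(A)\bigr) = \Tr \rho_{n+1}(A) + \Tr \rho_{n-1}(A)$. The remaining point is the multiplicativity of the trace over this tensor product: writing also $\tau_1(e_i) = \sum_k s_{ki}\otimes e_k$, the definition of $\tau_1 \otimes \tau_n$ multiplies matrix entries inside $\SLq$, so the diagonal $\SLq$-coefficient of $\rho_1 \otimes_\A \rho_n(A)$ at $e_i \otimes f_j$ is $A(s_{ii} t_{jj}) = A(s_{ii})\,A(t_{jj})$, using again that $A$ is an algebra homomorphism. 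Summing over $i,j$ and distributing gives $\Tr\bigl(\rho_1(A)\otimes_\A \rho_n(A)\bigr) = \bigl(\sum_i A(s_{ii})\bigr)\bigl(\sum_j A(t_{jj})\bigr) = \Tr\rho_1(A)\cdot\Tr\rho_n(A)$, which for $A = \Id_{\SLq}$ equals $(a+d)\,u_n$ by the $n=1$ computation of \S\ref{sect:Traces}. Combining the two displays yields $u_{n+1} = (a+d)\,u_n - u_{n-1}$.

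For the initial conditions, $\rho_0$ is the representation on the constants $\C[X,Y]_0^q = \C$, on which every $A$ acts as the identity, so $u_0 = 1 = S_0(a+d)$; and $u_1 = a+d = S_1(a+d)$, again by the $n=1$ computation. An immediate induction then gives $u_n = S_n(a+d)$ whenever $q$ is not a root of unity.

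The main obstacle is removing this genericity hypothesis, since the statement imposes no condition on $q$ and the Clebsch-Gordan decomposition genuinely fails at roots of unity. Here I would argue by specialization. The algebra $\SLq$ admits a PBW-type $\C$-basis of ordered monomials in $a,b,c,d$ whose structure constants --- together with the coefficients produced when one reduces using the determinant relation $ad - q^{-1}bc = 1$ --- are Laurent polynomials in $q$; equivalently, the algebras $\SLq$ for varying $q \in \C - \{0\}$ are all specializations of a single algebra free over $\Z[q, q^{-1}]$. Expressed in this basis, both $u_n$ and $S_n(a+d)$ have coefficients that are Laurent polynomials in $q$, and we have just shown that their difference vanishes for the infinitely many $q$ that are not roots of unity. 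A Laurent polynomial vanishing at infinitely many points of $\C - \{0\}$ is identically zero, so the difference vanishes for every $q$, including roots of unity; applying the homomorphism $A$ as in the first paragraph then delivers the identity for every $\A$-point and every $q$. The one step requiring genuine care is the existence and $q$-integrality of the PBW basis of $\SLq$, which is standard for the quantum coordinate algebra but should be invoked explicitly to license the specialization.
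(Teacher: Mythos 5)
Your proposal is correct and follows essentially the same route as the paper: the Clebsch--Gordan decomposition of Proposition~\ref{prop:ClebschGordanSL2q(A)} together with the basis-invariance of the trace gives the recurrence $u_{n+1}=(a+d)u_n-u_{n-1}$, the initial conditions pin down $S_n$, and the root-of-unity case is handled by working with the tautological point of $\SLq(\SLq)$ over $\C[q,q^{-1}]$ and specializing --- exactly the paper's ``continuity'' argument made explicit. Your added details (the multiplicativity of the trace over $\otimes_\A$ via the coaction matrix entries, and the Laurent-polynomial justification of the specialization) only flesh out steps the paper leaves implicit.
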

\begin{proof} The property makes sense for all $q$, but we first restrict attention to the case where $q$ is not a root of unity in order to apply Proposition~\ref{prop:ClebschGordanSL2q(A)}. 

Consider $A= \big(
\begin{smallmatrix}
 a&b\\c&d
\end{smallmatrix}
\big) \in \SLq(\A)$. By Proposition~\ref{prop:ClebschGordanSL2q(A)}, the $\A$--linear maps  $\rho_1(A) \otimes_\A \rho_n(A)$ and $\rho_{n+1}(A) \oplus \rho_{n-1}(A)$ are isomorphic over $\C$. By Lemma~\ref{lem:TraceInvariant}, they consequently have the same trace. 
Therefore,
\begin{align*}
\big( \Tr \rho_1(A)\big)\big( \Tr \rho_n(A)\big) &= \Tr \big(\rho_1(A) \otimes_\A \rho_n(A)\big)\\
 &= \Tr \big(\rho_{n+1}(A) \oplus \rho_{n-1}(A) \big)\\
 &= \Tr \big(\rho_{n+1}(A)\big) + \Tr\big( \rho_{n-1}(A)\big)
\end{align*}
and $\Tr\rho_n(A)$ therefore satisfies the same recurrence relation (\ref{eqn:Chebyshev1Def}--\ref{eqn:Chebyshev2Def}) as the Chebyshev polynomials. 

For $n=0$, $\rho_0(A)=\Id_\A$ and $\Tr \rho_0(A)=1$. By definition of the Chebyshev polynomial $S_n(t)$ in (\ref{eqn:Chebyshev2Def}), it follows that 
$$
\Tr \rho_n(A) = S_n\big( \Tr \rho_1(A) \big). 
$$

We already computed $\Tr \rho_1(A)$ in \S \ref{sect:Traces}. If $A= \big(
\begin{smallmatrix}
 a&b\\c&d
\end{smallmatrix}
\big)$, then $\rho_1(A)$ has matrix $ \big(
\begin{smallmatrix}
 a&c\\b&d
\end{smallmatrix}
\big)$ in the $\C$--basis $\{X,Y\}$ for $\A[X,Y]_1^q = \A \otimes \C[X,Y]_1^q$. (Note that this is the transpose matrix.) It follows that $\Tr \rho_1(A) = a+d$, which concludes our computation when $q$ is not a root of unity. 

The case where $q$ is a root of unity follows from this generic case by continuity. To justify this continuity argument for an arbitrary algebra $\A$, first consider the case when $\A=\SLq$, where we can make sense of continuity with respect to $q$ (for instance by considering $\SLq$ as an algebra over $\C[q, q^{-1}]$). The algebra $\SLq$ admits a tautological $\SLq$--point $I \in \SLq(\SLq)$, defined by the identity algebra homomorphism $I \colon \SLq \to \SLq$. Then, $ \Tr \rho_n(I) = S_n( \Tr I ) \in \SLq$ for every $q$ by continuity from the case where $q$ is not a root of unity. 

For a general algebra $\A$ and an $\A$--point $A \in \SLq(\A)$, a little thought will convince the reader that  $\Tr \rho_n(A)\in \A$ is the image of $ \Tr \rho_n(I)  \in \SLq$ under the algebra homomorphism $A \colon \SLq \to \A$; in particular, $\Tr A = A( \Tr I)$ by specialization to the case $n=1$. Then, 
\begin{align*}
 \Tr \rho_n(A) &= A \big( \Tr \rho_n(I) \big) = A \big( S_n( \Tr I ) \big)\\
 &= S_n \big( A( \Tr I ) \big) = S_n (\Tr A)
\end{align*}
for every $q$, using the fact that $A$ is an algebra homomorphism for the third equality. 
\end{proof}

\section{Miraculous cancellations}
\label{sect:Cancellations}

We now prove the main result of this article, namely Theorem~\ref{thm:Intro} which we rephrase a Theorem~\ref{thm:MiraculousCancel} below. Although we just encountered Chebyshev polynomials $S_n(t)$ of the second kind, the property involves the Chebyshev polynomials $T_n(t)$ of the first kind. 

Note that, if an $\A$--point $A=\bigl(
\begin{smallmatrix}
a&b \\ c&d
\end{smallmatrix}
\bigr) \in \SLq(\A)$ of $\SLq$ is upper triangular, namely is such that $c=0$, then necessarily $d=a^{-1}$ by the quantum determinant relation $ad-q^{-1}bc=1$ of Relation (\ref{eqn:QuantumDet1}). As a consequence, $A$ can be written as $A=\bigl(
\begin{smallmatrix}
a&b \\ 0 & a^{-1}
\end{smallmatrix}
\bigr)$ with $ba=qab$. Similarly, any lower triangular element of $\SLq(\A)$ is of the form $\bigl(
\begin{smallmatrix}
a&0 \\ b & a^{-1}
\end{smallmatrix}
\bigr)$ with $ba=qab$. 

\begin{thm}
\label{thm:MiraculousCancel}
Let $A_1$, $A_2$, \dots, $A_k\in \SLq(\A)$ be $\mathcal A$--points of $\SLq$ such that:
\begin{enumerate}
\item each $A_i$ is triangular of the form 
$\bigl(
\begin{smallmatrix}
a_i&b_i \\ 0&a_i^{-1}
\end{smallmatrix}
\bigr)$
or
$\bigl(
\begin{smallmatrix}
a_i&0\\ b_i&a_i^{-1}
\end{smallmatrix}
\bigr)$ for some $a_i$, $b_i\in \A$ 
(with $b_ia_i = q a_i b_i$);
\item $a_i$ and $b_i$ commute with $a_j$ and $b_j$ whenever $i \neq j$, so that we can make sense of  the product $A_1A_2 \dots A_n \in \SLq(\A)$. 
\end{enumerate}
Then, if $q^2$ is a primitive $n$--root of unity,
$$
T_n \big( \Tr A_1A_2\dots A_{k-1}A_k \big ) = \Tr A_1^{(n)}A_2^{(n)} \dots A_{k-1}^{(n)} A_k^{(n)}
$$
where, for each $i$,  $A_i^{(n)} =
\bigl(
\begin{smallmatrix}
a_i^n&b_i^n \\ 0&a_i^{-n}
\end{smallmatrix}
\bigr)$
or
$\bigl(
\begin{smallmatrix}
a_i^n&0\\ b_i^n &a_i^{-n}
\end{smallmatrix}
\bigr)$ is the $\mathcal A$--point of  $\mathrm{SL}_2^{q^{n^2}}$ obtained from $A_i =\bigl(
\begin{smallmatrix}
a_i&b_i \\ 0&a_i^{-1}
\end{smallmatrix}
\bigr)$
or
$\bigl(
\begin{smallmatrix}
a_i&0\\ b_i&a_i^{-1}
\end{smallmatrix}
\bigr)$ by replacing  $a_i$ and $b_i$ with their powers $a_i^n$ and $b_i^n$, respectively.
\end{thm}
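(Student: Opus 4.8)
The plan is to establish the equivalent identity $T_n(\Tr A) = \Tr A^{(n)}$ for the products $A = A_1A_2\cdots A_k \in \SLq(\A)$ and $A^{(n)} = A_1^{(n)}A_2^{(n)}\cdots A_k^{(n)}$, by combining a representation-theoretic identity with a short computation that exploits twice the Frobenius cancellation $(U+V)^n = U^n+V^n$ for $q^2$-commuting variables. Throughout I assume $n \geq 2$, the case $n = 1$ (where $q^2 = 1$ and $A^{(1)} = A$) being trivial. Write $V_m = \C[X,Y]^q_m$ and let $W \subseteq V_n$ be the $2$-dimensional subspace spanned by $X^n$ and $Y^n$.

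First I would prove the representation-theoretic input: when $q^2$ is a primitive $n$-root of unity, $W$ is a subcorepresentation of $V_n$ under the coaction $\tau_n$, and there is a short exact sequence $0 \to W \to V_n \to V_{n-2} \to 0$ of $\SLq$-corepresentations. The invariance of $W$ is immediate, since $(b\otimes Y)(a\otimes X) = ba\otimes YX = q^2\,(a\otimes X)(b\otimes Y)$, so that the cancellation of Lemma~\ref{lem:QuantumBinomialZero} applied with parameter $q^2$ gives $\tau_n(X^n) = (a\otimes X + b\otimes Y)^n = a^n\otimes X^n + b^n\otimes Y^n \in \SLq \otimes W$, and likewise $\tau_n(Y^n) \in \SLq\otimes W$. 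To identify the quotient I would pass to the $\Uq$-module side: under $\sigma_n$, the vanishing $\QQI{n}{q} = 0$ yields $F\cdot X^n = \QQI{n}{q}\,X^{n-1}Y = 0$ and $E\cdot Y^n = \QQI{n}{q}\,XY^{n-1} = 0$ (while $E\cdot X^n = F\cdot Y^n = 0$ automatically), so $W$ is a $\Uq$-submodule; and the non-vanishing $\QQI{m}{q}\neq 0$ for $0 < m < n$ shows that the image of $X^{n-1}Y$ is a highest weight vector generating the entire $(n-1)$-dimensional quotient, which is therefore the irreducible module $V_{n-2}$. This module isomorphism is then transferred to a corepresentation isomorphism $V_n/W \cong V_{n-2}$ through the duality $\langle\ ,\ \rangle$ and Takeuchi's injectivity (Lemma~\ref{lem:Takeuchi}), exactly as in the proof of Proposition~\ref{prop:ClebschGordanSL2q}; note that only the Clebsch-Gordan \emph{input} of that proof required $q$ generic, whereas the transfer mechanism itself is insensitive to whether $q$ is a root of unity.

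Taking traces in the short exact sequence, and using that all the identifications are $\C$-linear so that Lemma~\ref{lem:TraceInvariant} applies, gives $\Tr\rho_n(A) = \Tr(\rho_n(A)|_W) + \Tr\rho_{n-2}(A)$ for every $A \in \SLq(\A)$. Combined with Corollary~\ref{cor:Traces&ChebyshevSecondKind} and Lemma~\ref{lem:ChebyshevFirstSecondKind}, this yields $\Tr(\rho_n(A)|_W) = \Tr\rho_n(A) - \Tr\rho_{n-2}(A) = S_n(\Tr A) - S_{n-2}(\Tr A) = T_n(\Tr A)$. It then remains to identify $\Tr(\rho_n(A)|_W)$ with $\Tr A^{(n)}$, which is the short computational step. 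For $A_i = \bigl(\begin{smallmatrix} a_i & b_i \\ 0 & a_i^{-1}\end{smallmatrix}\bigr)$, the cancellation again gives $\rho_n(A_i)(X^n) = (a_iX + b_iY)^n = a_i^nX^n + b_i^nY^n$ and $\rho_n(A_i)(Y^n) = a_i^{-n}Y^n$, so $\rho_n(A_i)$ preserves $W$ and its restriction, read in the basis $\{X^n, Y^n\}$ of $q^{n^2}$-commuting variables, is precisely $\rho_1(A_i^{(n)})$; the lower triangular case is identical. Since distinct $A_i^{(n)}$ have commuting entries, two applications of the antihomomorphism property of Lemma~\ref{lem:QuantumRep}(3) give $\rho_n(A)|_W = \rho_1(A_k^{(n)})\circ\cdots\circ\rho_1(A_1^{(n)}) = \rho_1(A^{(n)})$, whence $\Tr(\rho_n(A)|_W) = \Tr A^{(n)}$. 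Chaining the two steps proves the theorem.

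The hard part will be the representation-theoretic first step, and specifically the identification of the quotient $V_n/W$ with $V_{n-2}$ at a root of unity. Because $V_n$ is no longer semisimple there, I cannot invoke the Clebsch-Gordan decomposition of Proposition~\ref{prop:ClebschGordanSL2q} (which needs $q$ generic); instead I must isolate the composition factor $V_{n-2}$ by hand on the $\Uq$-module side, using the precise vanishing pattern of the quantum integers $\QQI{m}{q}$, and then re-run the duality/Takeuchi transfer in this non-generic setting. By contrast, the two invocations of $(U+V)^n = U^n + V^n$ and the Chebyshev bookkeeping are routine.
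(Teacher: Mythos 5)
Your overall architecture is attractive and most of it is sound: the subspace $W=\mathrm{span}(X^n,Y^n)$ is indeed a subcorepresentation by the Frobenius cancellation, the trace bookkeeping via Corollary~\ref{cor:Traces&ChebyshevSecondKind} and Lemma~\ref{lem:ChebyshevFirstSecondKind} is correct, and your identification of $\rho_n(A)|_W$ with $\rho_1(A^{(n)})$ is exactly the computation that closes the paper's proof. The gap is in the step you yourself flag as the hard part: you cannot transfer the $\Uq$--module isomorphism $V_n/W\cong V_{n-2}$ to a corepresentation isomorphism by ``re-running the duality/Takeuchi transfer,'' because the injectivity of $\delta\colon\SLq\to\Uq^*$ (Lemma~\ref{lem:Takeuchi}) \emph{fails} when $q$ is a root of unity — your assertion that the transfer mechanism is insensitive to whether $q$ is a root of unity is false. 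Concretely, when $q^2$ is a primitive $n$--root of unity one has $\delta(b^n)=0$ although $b^n\neq 0$ in $\SLq$: a weight argument shows $\langle U,b^n\rangle$ can only receive contributions from the PBW monomials $K^sE^n$, and
$$
\langle K^sE^n,\,b^n\rangle \;=\; (\text{nonzero power of }q)\cdot \QI{n}{q^{2}}\,\QI{n-1}{q^{2}}\cdots\QI{1}{q^{2}} \;=\;0
$$
since $\QI{n}{q^2}=0$. (For $n=2$, $q^2=-1$, this is the computation $\sigma_1^{\otimes2}\Delta(E^2)(Y\otimes Y)=(q+q^{-1})X\otimes X=0$.) This failure is not incidental — it is the quantum Frobenius phenomenon your theorem is about: on $W$ the coaction $\tau_n(X^n)=a^n\otimes X^n+b^n\otimes Y^n$ is a nontrivial extension, yet the induced $\Uq$--action sees only $\langle U,a^n\rangle$ and is diagonal. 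So $\Uq$--module data genuinely cannot determine corepresentation data here, and if your transfer principle were valid it would equally ``prove'' that $\rho_n(A)|_W$ is diagonal, killing all the $b_i^n$ terms in $\Tr A^{(n)}$.

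The statement you need — that the induced coaction on $V_n/W$ is isomorphic over $\C$ to $\tau_{n-2}$ (or at least that $\Tr(\rho_n(A)|_{V_n/W})=\Tr\rho_{n-2}(A)$ for the products of triangular matrices in the theorem) — is in fact true, but it has to be checked directly rather than deduced from the $\Uq$ side. An explicit diagonal rescaling $\overline{X^{n-u}Y^u}\mapsto \QI{u}{q^2}\,q^{-u}\,X^{n-1-u}Y^{u-1}$ does the job, and verifying this amounts precisely to the quantum-binomial identity
$\QB{u}{v}{q^2}=\frac{\QI{u}{q^2}}{\QI{v}{q^2}}\QB{u-1}{v-1}{q^2}$ together with $\QI{n-u}{q^2}=-q^{-2u}\QI{u}{q^2}$, which is the content of the paper's Lemma~\ref{lem:NoIndex0orN}. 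So your proof becomes correct once you replace the duality transfer by that direct computation; as it stands, the central step rests on an instance of Lemma~\ref{lem:Takeuchi} that is false in the only regime where the theorem applies.
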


Note that $b_i^n a_i^n = q^{n^2} a_i^n b_i^n$ since $b_ia_i = qa_ib_i$. Also, $q^{n^2}$ is equal to $\pm1$ since $q^{2n}=1$, and  is always $+1$ when $n$ is even.

\begin{proof} For notational convenience, we will reverse the indexing and prove the equivalent statement that
\begin{equation}
 \label{eqn:MiraculousCancelReversed}
  T_n \big( \Tr A_kA_{k-1}\dots A_2A_1 \big ) = \Tr A_k^{(n)}A_{k-1}^{(n)} \dots A_2^{(n)} A_1^{(n)}.
\end{equation}
For this, we will use Lemma~\ref{lem:ChebyshevFirstSecondKind} and Corollary~\ref{cor:Traces&ChebyshevSecondKind}, so that 
\begin{align*}
 T_n \big( \Tr A_kA_{k-1}\dots A_2A_1 \big ) &= S_n \big( \Tr A_kA_{k-1}\dots A_2A_1 \big ) - S_{n-2} \big( \Tr A_kA_{k-1}\dots A_2A_1 \big )\\
 &=\Tr \rho_n ( A_kA_{k-1}\dots A_2A_1  ) - \Tr \rho_{n-2} (  A_kA_{k-1}\dots A_2A_1 )
\end{align*}
for the representations $ \rho_m\colon \SLq(\A) \to \End_\A \big(\A[X,Y]^q_m \big) $ of \S \ref{sect:SL2q}. 

We first compute these traces.

When $A_i$ is lower triangular,  the image of $X^{n-u}Y^u \in \A[X,Y]^q_n$ under $\rho_n(A_i)$ is 
\begin{align*}
\rho_n(A_i)(X^{n-u} Y^{u} )&= 
\rho_n\left(\begin{smallmatrix}
 a_i& 0\\ b_i& a_i^{-1}
\end{smallmatrix}\right)
(X^{n-u} Y^{u})
=(a_iX)^{n-u} (b_iX + a_i^{-1}Y)^{u}\\
&=
a_i^{n-u} X^{n-u}\sum_{v=0}^{u}  \QB{u}{v}{q^2}  b_i^{u-v} X^{u-v}a_i^{-v}Y^{v}\\
&=
\sum_{v=0}^{u}  \QB{u}{v}{q^2}  q^{-v(u-v)}
a_i^{n-u-v} b_i^{u-v} X^{n-v}Y^{v},
\end{align*}
using the Quantum Binomial Formula (\ref{eqn:QuantumBinomialFormula}) of \S \ref{sect:FrobeniusEqn}. 
In particular, if we express $\rho_n(A_i)$ in the basis $\{X^{n-u}Y^u; u=0, 1, \dots, n\}$ for $\A[X,Y]^q_n$, the entries of the corresponding matrix are
\begin{equation}
\label{eqn:UpperTriangMatrix}
\rho_n(A_i)_{vu} = 
\begin{cases}
 \QB{u}{v}{q^2}  q^{-v(u-v)}
a_i^{n-u-v} b_i^{u-v} &\text{ if } v\leq u\\
0 &\text{ if }v>u.
\end{cases}
\end{equation}
Note that this matrix is upper triangular.

Similarly, when $A_i$ is upper triangular, 
\begin{align*}
\rho_n(A_i)(X^{n-u} Y^{u} )&= 
\rho_n\left(\begin{smallmatrix}
 a_i&b_i\\0& a_i^{-1}
\end{smallmatrix}\right)
(X^{n-u} Y^{u})
=(a_iX+ b_iY)^{n-u} ( a_i^{-1}Y)^u\\
&=
\sum_{v=u}^{n}  \QB{n-u}{n-v}{q^2}  q^{-u(v-u)}
a_i^{n-u-v} b_i^{v-u} X^{n-v}Y^v
\end{align*}
and
\begin{equation}
\label{eqn:LowerTriangMatrix}
\rho_n(A_i)_{vu} = 
\begin{cases}
0 &\text{ if } v< u\\
 \QB{n-u}{n-v}{q^2}  q^{-u(v-u)}
a_i^{n-u-v} b_i^{v-u}  &\text{ if }v\geq u.
\end{cases}
\end{equation}

In particular,
\begin{align}
\label{eqn:TraceDimN}
\Tr \rho_n(A_kA_{k-1}\dots A_2A_1)&= \Tr \rho_n(A_1) \circ \rho_n(A_2) \circ \dots \circ \rho_n(A_{k-1}) \circ \rho_n(A_k) \\ \notag
&= \hskip -10pt
\sum_{u_1,u_2, \dots, u_k \in \{0, \dots, n\}} \hskip -10pt
\rho_n(A_1)_{u_1u_2}\, \rho_n(A_2)_{u_2u_3} \dots\\ \notag
&\qquad\qquad\qquad \qquad\qquad \dots \rho_n(A_{k-1})_{u_{k-1}u_k}\,\rho_n(A_k)_{u_ku_1}
\end{align}
and
\begin{align}
\label{eqn:TraceDimN-2}
\Tr \rho_{n-2}(A_kA_{k-1}\dots A_2A_1)&= \hskip -10pt
\sum_{v_1,v_2, \dots, v_k \in \{0, \dots, {n-2}\}} \hskip -10pt
\rho_{n-2}(A_1)_{v_1v_2}\, \rho_{n-2}(A_2)_{v_2v_3} \dots\\ \notag
&\qquad\qquad\qquad \qquad\qquad
 \dots \rho_{n-2}(A_{k-1})_{v_{k-1}v_k}\,\rho_{n-2}(A_k)_{v_kv_1},
\end{align}
where the terms $\rho_n(A_i)_{vu} $ are given by Equations (\ref{eqn:UpperTriangMatrix}) and (\ref{eqn:LowerTriangMatrix}). 

We distinguish three types of terms in the sum of Equation~(\ref{eqn:TraceDimN}), according to the corresponding indices $u_1$, $u_2$, \dots, $u_k \in \{0, \dots, n\}$:
\begin{enumerate}
\item [({i})] no $u_i$ is equal to $0$ or $n$;
\item [({ii})] some but not all $u_i$ are equal to $0$ or $n$;
\item [({iii})] all $u_i$ are equal to $0$ or $n$;
\end{enumerate}

We begin with the first type. 
\begin{lem}
\label{lem:NoIndex0orN}
If no $u_i$ is equal to $0$ or $n$, the term
$$
U_n(u_1, \dots, u_k)=\rho_n(A_1)_{u_1u_2}\, \rho_n(A_2)_{u_2u_3} \dots \rho_n(A_{k-1})_{u_{k-1}u_k}\,\rho_n(A_k)_{u_ku_1}
$$
of Equation~{\upshape(\ref{eqn:TraceDimN})} corresponding to  $u_1$, $u_2$, \dots, $u_k \in \{1, \dots, n-1\}$ is equal to the term
$$
U_{n-2}(v_1, \dots, v_k) =\rho_{n-2}(A_1)_{v_1v_2}\, \rho_{n-2}(A_2)_{v_2v_3} 
\dots \rho_n(A_{k-1})_{v_{k-1}v_k}\,\rho_{n-2}(A_k)_{v_kv_1}
$$
of Equation~{\upshape(\ref{eqn:TraceDimN-2})} corresponding to the indices $v_1$, $v_2$, \dots, $v_k \in \{0, \dots, n-2\}$ with $v_i=u_i-1$. 
\end{lem}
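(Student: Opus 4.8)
The plan is to reduce the identity $U_n(u_1,\dots,u_k)=U_{n-2}(v_1,\dots,v_k)$ to a purely scalar telescoping computation, by separating each matrix entry into a central scalar and a monomial in $a_i$, $b_i$. By (\ref{eqn:UpperTriangMatrix})--(\ref{eqn:LowerTriangMatrix}), whether $A_i$ is upper or lower triangular the entry $\rho_n(A_i)_{u_iu_{i+1}}$ is a product $c_n^{(i)}\,\mu_i$, where $c_n^{(i)}\in\C$ is a quantum binomial coefficient times a power of $q$ and $\mu_i=a_i^{\,n-u_i-u_{i+1}}\,b_i^{\pm(u_{i+1}-u_i)}$ is a monomial in $a_i,b_i$ only. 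Substituting $v_i=u_i-1$ into the dimension-$(n-2)$ formulas shows that the monomial attached to $\rho_{n-2}(A_i)_{v_iv_{i+1}}$ is \emph{exactly} $\mu_i$: the shift by $1$ in each index precisely compensates the drop of the total degree from $n$ to $n-2$ in the exponent of $a_i$, while the exponent of $b_i$ is unchanged. Since each $c_n^{(i)}$ is central and the monomials attached to distinct indices commute by hypothesis~(2), both $U_n$ and $U_{n-2}$ factor as $\bigl(\prod_i c^{(i)}\bigr)\mu_1\mu_2\cdots\mu_k$ with the \emph{same} monomial part. The identity thus reduces to the scalar equation $\prod_i c_n^{(i)}=\prod_i c_{n-2}^{(i)}$. (If a triangularity constraint $u_i\le u_{i+1}$ or $u_i\ge u_{i+1}$ is violated, both entries vanish and both terms are $0$; so I may assume every factor is nonzero, in which case the relevant quantum binomials, having arguments at most $n-1$, are nonzero by Lemma~\ref{lem:QuantumBinomialZero}.)

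Next I would compute the per-factor ratio $c_n^{(i)}/c_{n-2}^{(i)}$ using the elementary identity $\QB{m}{j}{Q}=\frac{\QI{m}{Q}}{\QI{j}{Q}}\QB{m-1}{j-1}{Q}$ with $Q=q^2$, together with the conversion $\QI{m}{q^2}=q^{m-1}\QQI{m}{q}$ recorded in the definition of $\QQI{m}{q}$. A short bookkeeping of the two powers of $q$ (the one carried by the binomial conversion and the one explicit in the entry) shows that they cancel cleanly, and I expect to obtain
$$
\frac{c_n^{(i)}}{c_{n-2}^{(i)}}=\frac{\QQI{u_{i+1}}{q}}{\QQI{u_i}{q}}\quad\text{(lower triangular)},
\qquad
\frac{c_n^{(i)}}{c_{n-2}^{(i)}}=\frac{\QQI{n-u_{i+1}}{q}}{\QQI{n-u_i}{q}}\quad\text{(upper triangular)}.
$$

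The main obstacle is that these two ratios are genuinely different functions of the indices, so for a product mixing upper- and lower-triangular factors the cyclic product does not obviously telescope, and indeed it does not for generic $q$. This is exactly where the root-of-unity hypothesis enters. Since $q^2$ is a primitive $n$--root of unity, $q^n=\epsilon\in\{\pm1\}$, and a one-line computation gives the symmetry $\QQI{n-m}{q}=-\epsilon\,\QQI{m}{q}$. Feeding this into the upper-triangular ratio turns it into $\QQI{u_{i+1}}{q}/\QQI{u_i}{q}$ as well, so that both types of factor contribute the same ratio. The cyclic product then telescopes,
$$
\prod_{i=1}^{k}\frac{c_n^{(i)}}{c_{n-2}^{(i)}}=\prod_{i=1}^{k}\frac{\QQI{u_{i+1}}{q}}{\QQI{u_i}{q}}=1
$$
with the convention $u_{k+1}=u_1$, all denominators $\QQI{u_i}{q}$ being nonzero because $1\le u_i\le n-1$. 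Hence $\prod_i c_n^{(i)}=\prod_i c_{n-2}^{(i)}$, which combined with the equality of the monomial parts yields $U_n(u_1,\dots,u_k)=U_{n-2}(v_1,\dots,v_k)$, as claimed.
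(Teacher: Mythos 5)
Your proof is correct and follows essentially the same route as the paper: the same per-factor comparison via $\QB{u}{v}{q^2}=\frac{\QI{u}{q^2}}{\QI{v}{q^2}}\QB{u-1}{v-1}{q^2}$, the same use of $q^{2n}=1$ to convert the upper-triangular ratio into the lower-triangular one, and the same telescoping of the cyclic product. The only difference is cosmetic: you phrase the ratios with balanced quantum integers $\QQI{\cdot}{q}$ so the residual powers of $q$ cancel factor by factor, whereas the paper keeps $\QI{\cdot}{q^2}$ and lets the extra $q^{u_i-u_{i+1}}$ cancel in the cyclic product.
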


\begin{proof} Set $u_{k+1}=u_1$ and $v_{k+1}=v_1$ to introduce uniformity in the notation.

If $A_i$ is lower triangular, Equation~(\ref{eqn:UpperTriangMatrix}) gives
$$
\rho_n(A_i)_{u_iu_{i+1}} = 
\begin{cases}
 \QB{u_{i+1}}{u_i}{q^2}  q^{-u_i(u_{i+1}-u_i)}
a_i^{n-u_{i+1}-u_i} b_i^{u_{i+1}-u_i} &\text{ if } u_i\leq u_{i+1}\\
0 &\text{ if }u_i>u_{i+1}
\end{cases}
$$
while, using the property that $v_i=u_i-1$,
$$
\rho_{n-2}(A_i)_{v_iv_{i+1}} = 
\begin{cases}
 \QB{u_{i+1}-1}{u_i-1}{q^2}  q^{-(u_i-1)(u_{i+1}-u_i)}
a_i^{n-u_{i+1}-u_i} b_i^{u_{i+1}-u_i}  &\text{ if } u_i\leq u_{i+1}\\
0 &\text{ if }u_i>u_{i+1}.
\end{cases}
$$
Since  $\QB{u}{v}{q^2}= \frac{\QI{u}{q^2}}{\QI{v}{q^2}} \QB{u-1}{v-1}{q^2} $, it follows that
$$
\rho_n(A_i)_{u_iu_{i+1}} = 
\frac{\QI{u_{i+1}}{q^2}}{\QI{u_i}{q^2}}q^{u_i-u_{i+1}} \rho_{n-2}(A_i)_{v_iv_{i+1}}
$$
when $A_i$ is lower triangular. 

Similarly, when $A_i$ is upper triangular,
$$
\rho_n(A_i)_{u_iu_{i+1}} = 
\begin{cases}
0 &\text{ if } u_i< u_{i+1}\\
 \QB{n-u_{i+1}}{n-u_i}{q^2}  q^{-u_{i+1}(u_i-u_{i+1})}
a_i^{n-u_{i+1}-u_i} b_i^{u_i-u_{i+1}}  &\text{ if }u_i\geq u_{i+1}.
\end{cases}
$$
and
$$
\rho_{n-2}(A_i)_{v_iv_{i+1}} = 
\begin{cases}
0 &\text{ if } u_i< u_{i+1}\\
 \QB{n-u_{i+1}-1}{n-u_i-1}{q^2}  q^{-(u_{i+1}-1)(u_i-u_{i+1})}
a_i^{n-u_{i+1}-u_i} b_i^{u_i-u_{i+1}}  &\text{ if }u_i\geq u_{i+1},
\end{cases}
$$
Therefore
\begin{align*}
\rho_n(A_i)_{u_iu_{i+1}} &= 
\frac{\QI{n-u_{i+1}}{q^2}}{\QI{n-u_i}{q^2}}q^{u_{i+1}-u_i} \rho_{n-2}(A_i)_{v_iv_{i+1}}\\
&= \frac{-q^{-2u_{i+1}}\QI{u_{i+1}}{q^2}}{-q^{-2u_i}\QI{u_i}{q^2}}q^{u_{i+1}-u_i} \rho_{n-2}(A_i)_{v_iv_{i+1}}\\
&= \frac{\QI{u_{i+1}}{q^2}}{\QI{u_i}{q^2}}q^{u_i-u_{i+1}} \rho_{n-2}(A_i)_{v_iv_{i+1}}
\end{align*}
using the property that
$$
\QI{n-u}{q^2} = \frac{q^{2n-2u}-1}{q^2-1} =  -q^{-2u}\frac{q^{2u}-1}{q^2-1} = -q^{-2u} \QI{u}{q^2}
$$
since $q^{2n}=1$.

As a consequence, we get the same formula whether $A_i$ is upper or lower triangular. Taking the product over all $i$,
\begin{align*}
 U_n(u_1, \dots, u_k) &= U_{n-2}(v_1, \dots, v_k) \prod_{i=1}^k \frac{\QI{u_{i+1}}{q^2}}{\QI{u_i}{q^2}}q^{u_i-u_{i+1}}  =U_{n-2}(v_1, \dots, v_k) ,
\end{align*}
where the second equality comes from the fact that $u_{k+1}=u_1$. This proves Lemma~\ref{lem:NoIndex0orN}. 
\end{proof}

\begin{lem}
\label{lem:SomeIndices0orN}
If some but not all indices $u_i$ are equal to $0$ or $n$, the term
$$
U_n(u_1, \dots, u_k)=\rho_n(A_1)_{u_1u_2}\, \rho_n(A_2)_{u_2u_3} \dots \rho_n(A_{k-1})_{u_{k-1}u_k}\,\rho_n(A_k)_{u_ku_1}
$$
of Equation~{\upshape(\ref{eqn:TraceDimN})} corresponding to  $u_1$, $u_2$, \dots, $u_k \in \{0, \dots, n\}$ is equal to 0. 
\end{lem}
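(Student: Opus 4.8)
The plan is to exhibit, among the $k$ factors of the cyclic product $U_n(u_1,\dots,u_k)$, a single factor that already vanishes. Call an index $u_i$ \emph{extreme} if $u_i\in\{0,n\}$ and \emph{interior} if $u_i\in\{1,\dots,n-1\}$, and, as in the proof of Lemma~\ref{lem:NoIndex0orN}, set $u_{k+1}=u_1$. The heart of the argument is the local observation that, \emph{whenever $u_i$ is interior and $u_{i+1}$ is extreme, the entry $\rho_n(A_i)_{u_iu_{i+1}}$ is equal to $0$}. I would establish this by inspecting the four cases coming from the two possible shapes of $A_i$ and the two possible extreme values of $u_{i+1}$, reading off the entries from Equations~(\ref{eqn:UpperTriangMatrix}) and (\ref{eqn:LowerTriangMatrix}) with row index $v=u_i$ and column index $u=u_{i+1}$. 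In two of these cases the entry vanishes for triangularity reasons, because the order condition fails: for $A_i$ lower triangular with $u_{i+1}=0$ we would need $u_i\leq 0$, and for $A_i$ upper triangular with $u_{i+1}=n$ we would need $u_i\geq n$, both impossible since $0<u_i<n$. In the remaining two cases the relevant quantum binomial coefficient is of the form $\QB{n}{j}{q^2}$ with $0<j<n$: for $A_i$ lower triangular with $u_{i+1}=n$ it is $\QB{n}{u_i}{q^2}$, and for $A_i$ upper triangular with $u_{i+1}=0$ it is $\QB{n}{n-u_i}{q^2}$. Since $q^2$ is a primitive $n$--root of unity, both vanish by Lemma~\ref{lem:QuantumBinomialZero}.

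Granting this local observation, the lemma follows from a purely combinatorial remark about the cyclic sequence of labels (interior or extreme) attached to $u_1,u_2,\dots,u_k,u_{k+1}=u_1$. By hypothesis some but not all of the $u_i$ are extreme, so both labels occur. Starting from any interior index and walking forward cyclically, one must eventually reach an extreme index, and the step immediately preceding the first such index is an interior-to-extreme transition; hence there exists an index $i$ with $u_i$ interior and $u_{i+1}$ extreme. For this $i$ the factor $\rho_n(A_i)_{u_iu_{i+1}}$ is zero by the local observation, and consequently the whole product $U_n(u_1,\dots,u_k)$ is zero, which is the assertion of the lemma.

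The only point requiring genuine care is the bookkeeping in the four-way case analysis, in particular matching the row/column indexing conventions of Equations~(\ref{eqn:UpperTriangMatrix}) and (\ref{eqn:LowerTriangMatrix}) so that the quantum binomial coefficients come out as $\QB{n}{u_i}{q^2}$ and $\QB{n}{n-u_i}{q^2}$; this is exactly where the constraint $0<u_i<n$ guarantees that the argument $j$ satisfies $0<j<n$, so that Lemma~\ref{lem:QuantumBinomialZero} applies. Beyond this routine verification I do not expect any obstacle, and in particular no further structure of the matrices $A_i$ (such as the $q$--commutativity of $a_i$ and $b_i$ or the commutation relations between distinct indices) is needed for this step.
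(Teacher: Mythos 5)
Your proposal is correct and follows essentially the same route as the paper: locate an index $i$ with $u_i$ interior and $u_{i+1}$ extreme (which exists precisely because some but not all indices are extreme), then kill the factor $\rho_n(A_i)_{u_iu_{i+1}}$ either by the triangularity of the matrix of $\rho_n(A_i)$ or by the vanishing of $\QB{n}{u_i}{q^2}$ or $\QB{n}{n-u_i}{q^2}$ via Lemma~\ref{lem:QuantumBinomialZero}. Your four-way case analysis and index bookkeeping match the paper's exactly.
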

\begin{proof} This is a consequence of Lemma~\ref{lem:QuantumBinomialZero}, which says that, because $q^2$ is a primitive $n$--root of unity, the quantum binomial coefficient $\QB{n}{u}{q^2}$ is equal to 0 for $0<u<n$. 

For convenience, set $u_{k+1}=u_1$ as in the proof of Lemma~\ref{lem:NoIndex0orN}. By hypothesis, there is then an index $i$ such that $0<u_i<n$ and $u_{i+1}=0$ or $n$. 

Consider first the case when $0<u_i<n$ and $u_{i+1}=0$. If $A_i$ is lower triangular, then $\rho_n(A_i)_{u_iu_{i+1}}=0$ by Equation~(\ref{eqn:UpperTriangMatrix}), and consequently $U_n(u_1, \dots, u_k)=0$. 
Otherwise, Equation~(\ref{eqn:LowerTriangMatrix}) gives
$$
\rho_n(A_i)_{u_iu_{i+1}} =  \QB{n}{n-u_i}{q^2}  a_i^{n-u_i} b_i^{u_i} =0
$$
by Lemma~\ref{lem:QuantumBinomialZero}. This proves that $U_n(u_1, \dots, u_k)=0$ in this case. 

Similarly, if  $0<u_i<n$ and $u_{i+1}=n$, Equation~(\ref{eqn:LowerTriangMatrix}) immediately shows that $U_n(u_1, \dots, u_k)=0$ when $A_i$ is upper triangular, and otherwise gives 
$$
\rho_n(A_i)_{u_iu_{i+1}} =  \QB{n}{u_i}{q^2}  q^{-u_i(n-u_i)}
a_i^{-u_i} b_i^{n-u_i} =0
$$
by Lemma~\ref{lem:QuantumBinomialZero}, again proving that $U_n(u_1, \dots, u_k)=0$. 
\end{proof}

Lemmas~\ref{lem:NoIndex0orN} and \ref{lem:SomeIndices0orN} show that, when computing
$$
T_n(\Tr A_kA_{k-1}\dots A_2A_1) = \Tr \rho_n(A_kA_{k-1}\dots A_2A_1) - \Tr \rho_{n-2}(A_kA_{k-1}\dots A_2A_1)
$$
using the expressions of Equations(\ref{eqn:TraceDimN}--\ref{eqn:TraceDimN-2}), the only terms left are 
$$\sum_{u_1,u_2, \dots, u_k \in \{0, n\}} \hskip -10pt
\rho_n(A_1)_{u_1u_2}\, \rho_n(A_2)_{u_2u_3} \dots
 \rho_n(A_{k-1})_{u_{k-1}u_k}\,\rho_n(A_k)_{u_ku_1}$$
where
$$
\rho_n(A_i)_{u_iu_{i+1}} = 
\begin{cases}
a_i^{n} &\text{ if } u_i= u_{i+1}=0\\
b_i^{n} &\text{ if } u_i=0 \text{ and } u_{i+1}=n\\
0 &\text{ if }u_i =n \text{ and } u_{i+1}=0\\
a_i^{-n} &\text{ if } u_i= u_{i+1}=n
\end{cases}
$$
if $A_i$ is lower triangular, and 
$$
\rho_n(A_i)_{u_iu_{i+1}} = 
\begin{cases}
a_i^{n}  &\text{ if }u_i= u_{i+1}=0\\
0 &\text{ if } u_i=0 \text{ and } u_{i+1}=n\\
 b_i^{n}  &\text{ if }u_i =n \text{ and } u_{i+1}=0\\
a_i^{-n} &\text{ if }u_i= u_{i+1}=n
\end{cases}
$$
if $A_i$ is upper triangular. 

As a consequence, comparing the general case to the case $n=1$,
\begin{align*}
 T_n(\Tr A_kA_{k-1}\dots A_2A_1) &= \hskip -10pt\sum_{u_1,u_2, \dots, u_k \in \{0, 1\}} \hskip -10pt
\rho_1\big(A_1^{(n)}\big)_{u_1u_2}\, \rho_1\big(A_2^{(n)}\big)_{u_2u_3} \dots  \\
&\qquad\qquad\qquad\qquad\qquad
\dots \rho_1\big(A_{k-1}^{(n)}\big)_{u_{k-1}u_k}\,\rho_1\big(A_k^{(n)}\big)_{u_ku_1}\\
&= \Tr \rho_1\big(A_k^{(n)} A_{k-1}^{(n)} \dots A_2^{(n)} A_1^{(n)}\big)
\end{align*}
where $A_i^{(n)} = \Big( 
\begin{smallmatrix}
 a_i^n & b_i^n \\b_i^n & d_i^n
\end{smallmatrix}
 \Big)$ is obtained from $A_i = \big( 
\begin{smallmatrix}
 a_i & b_i \\b_i & d_i
\end{smallmatrix}
 \big)$
 by replacing $a_i$, $b_i$, $b_i$, $d_i$ with $a_i^n$, $b_i^n$, $b_i^n$, $d_i^n$, respectively. 
 
 We already observed that, for an $\A$--point $A = \big( 
\begin{smallmatrix}
 a & b \\c& d
\end{smallmatrix}
 \big) \in \SLq(\A)$, the matrix of $\rho_1(A)$ in the basis $\{X,Y\}$ for $\C[X,Y]_1^q$ is the transpose $\big( 
\begin{smallmatrix}
 a & c \\b& d
\end{smallmatrix}
 \big) $, so that $\Tr \rho_1(A) = a+d = \Tr A$. It follows that
 \begin{align*}
 T_n(\Tr A_kA_{k-1}\dots A_2A_1) &=  \Tr \rho_1\big(A_k^{(n)} A_{k-1}^{(n)} \dots A_2^{(n)} A_1^{(n)}\big)\\
 &=  \Tr A_k^{(n)} A_{k-1}^{(n)} \dots A_2^{(n)} A_1^{(n)}.
\end{align*}
This is exactly the relation (\ref{eqn:MiraculousCancelReversed}) that we wanted to prove, which concludes the proof of Theorem~\ref{thm:MiraculousCancel}. 
\end{proof}

\section{A positivity property}

Many fewer cancellations occur when $q$ is not a root of unity. This can be precisely quantified by using  a certain positivity property for $T_n \big( \Tr A_1A_2\dots A_k \big )$.

Let $A_1A_2 \dots A_k$ be a product of triangular matrices $A_i=  \big( 
\begin{smallmatrix}
 a_i & b_i\\ 0&a_i^{-1}
\end{smallmatrix}\big)$ or 
$\big(\begin{smallmatrix}
 a_i & 0\\ b_i&a_i^{-1}
\end{smallmatrix}
 \big) \in \SLq(\A)$ satisfying the hypotheses of  Theorems~\ref{thm:Intro} or \ref{thm:MiraculousCancel}, except that we are not requiring $q$ to be a root of unity. Then, $\Tr A_1A_2\dots A_k$ can be written as a sum of monomials $\prod_{i=1}^k c_i$ with $c_i\in \{a_i, b_i, a_i^{-1}\}$ and therefore, for every polynomial $P(t) \in \Z[t]$ with integer coefficients,   $P \big( \Tr A_1A_2\dots A_k \big )$ can be written as a sum of monomials of the form $\pm q^\xi \prod_{i=1}^k a_i^{\alpha_i} b_i^{\beta_i} $ with integer powers $\xi$, $\alpha_i$, $\beta_i \in \Z$ (with $\beta_i\geq 0$).
 
 The following result states that, when $P(t)$ is one of the Chebyshev polynomials $S_n(t)$ or $T_n(t)$,  the signs $\pm$ can always be taken to be $+$.

\begin{prop}
\label{prop:TnPositiveSum}
 Under the hypotheses of  Theorems~{\upshape\ref{thm:Intro}} or {\upshape\ref{thm:MiraculousCancel}} but without any assumption on the parameter $q\in \C-\{0\}$,  the evaluations  $S_n \big( \Tr A_1A_2\dots A_k \big )$ and $T_n \big( \Tr A_1A_2\dots A_k \big ) \in \A$ of the Chebyshev polynomials can be written as a sum of positive monomials of the form $+ q^\xi \prod_{i=1}^k a_i^{\alpha_i} b_i^{\beta_i} $ with integer powers $\xi$, $\alpha_i$, $\beta_i \in \Z$.
\end{prop}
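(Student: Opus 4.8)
\emph{Reduction to a single matrix.} The plan is to replace the product by the single $\A$--point $A=A_1A_2\cdots A_k=\bigl(\begin{smallmatrix}a&b\\c&d\end{smallmatrix}\bigr)\in\SLq(\A)$, which is well defined by hypothesis~(2). First I would record a closure property: the product of two positive monomials is again a positive monomial. Indeed, multiplying $q^{\xi}\prod_i a_i^{\alpha_i}b_i^{\beta_i}$ by $q^{\eta}\prod_i a_i^{\gamma_i}b_i^{\delta_i}$ and sliding the second block of $a_i$'s to the left only invokes $b_ia_i=qa_ib_i$ and the commutation hypothesis~(2), so it produces $+\,q^{\zeta}\prod_i a_i^{\alpha_i+\gamma_i}b_i^{\beta_i+\delta_i}$ for some $\zeta\in\Z$, always with a $+$ sign. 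Since each entry $a,b,c,d$ of the product $A$ is itself a sum of positive monomials (the entries of a product of triangular matrices whose own entries are positive monomials), any word in $a,b,c,d$ carrying a coefficient that is a Laurent polynomial in $q$ with nonnegative coefficients expands into a sum of positive monomials. It therefore suffices to prove that $S_n(a+d)$ and $T_n(a+d)$ are combinations of words in $a,b,c,d$ whose coefficients are Laurent polynomials in $q$ with nonnegative coefficients.

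\emph{The case of $S_n$.} Here the claim is immediate. By Corollary~\ref{cor:Traces&ChebyshevSecondKind}, $S_n(a+d)=\Tr\rho_n(A)$, so I would compute the diagonal entries of $\rho_n(A)$ in the basis $\{X^{n-u}Y^u\}$. Expanding $(aX+bY)^{n-u}(cX+dY)^u$ with the Quantum Binomial Formula~(\ref{eqn:QuantumBinomialFormula}) applied to the $q^2$--commuting pairs $\{aX,bY\}$ and $\{cX,dY\}$, and extracting the coefficient of $X^{n-u}Y^u$, gives
$$
\rho_n(A)_{uu}=\sum_{j=0}^{\min\{n-u,\,u\}} q^{j^2}\,\QB{n-u}{j}{q^2}\QB{u}{j}{q^2}\;a^{\,n-u-j}b^{\,j}c^{\,j}d^{\,u-j}.
$$
This is a nonnegative-coefficient combination of words, because the quantum binomials $\QB{\cdot}{\cdot}{q^2}$ are polynomials in $q^2$ with nonnegative integer coefficients and reordering $X$ and $Y$ only produces the positive power $q^{j^2}$. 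Summing over $u$ settles the statement for $S_n$.

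\emph{The case of $T_n$, and the main obstacle.} For $T_n$ I would use $T_n=S_n-S_{n-2}$ (Lemma~\ref{lem:ChebyshevFirstSecondKind}). The subtraction is the whole difficulty: the words of $S_n(a+d)$ and of $S_{n-2}(a+d)$ sit in different total degrees ($n$ and $n-2$), so the negative part is not visibly absorbed, and for generic $q$ the term-by-term cancellation used in the proof of Theorem~\ref{thm:MiraculousCancel} is unavailable (that cancellation rested on $\QI{n-u}{q^2}=-q^{-2u}\QI{u}{q^2}$, valid only when $q^{2n}=1$). The plan is to bridge the degree gap with the quantum determinant relation, inserting $1=ad-q^{-1}bc$ into each word of $S_{n-2}(a+d)$ to promote it to degree $n$, and then to verify that after collecting words the combination $S_n(a+d)-S_{n-2}(a+d)$ retains only nonnegative coefficients; the low-degree outputs $T_2(a+d)=a^2+d^2+(q+q^{-1})\,bc$ and $T_3(a+d)=a^3+d^3+(q^{-1}+q+q^3)(abc+bcd)$ already exhibit both the absorption of the constant term of $T_n$ and the positive result. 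The hard part will be the bookkeeping for this absorption in general: because $a$ and $d$ do not $q$--commute but satisfy $ad-da=(q^{-1}-q)bc$, inserting the determinant relation scrambles the monomials, and one must check that every negative contribution is cancelled. I note that when the $A_i$ all share the same triangular type this obstacle disappears: the telescoping factor in Lemma~\ref{lem:NoIndex0orN} equals $1$ for every $q$ (not only at roots of unity), so the type~(i) terms already cancel $\Tr\rho_{n-2}(A)$ and $T_n(\Tr A)$ reduces to the manifestly positive type~(ii) and type~(iii) terms; the genuinely difficult case is thus a product mixing upper- and lower-triangular factors.
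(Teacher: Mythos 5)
Your treatment of $S_n$ is fine, and essentially equivalent to the paper's (which computes $\Tr\rho_n(A_1\cdots A_k)$ entrywise from the triangular factors rather than from the product matrix; both reduce to the nonnegativity of the coefficients of the quantum binomial coefficients). But the real content of the proposition is the statement for $T_n$, and there your argument stops exactly where the difficulty begins. You correctly identify that the subtraction $S_n-S_{n-2}$ is the obstacle and that the cancellation of Lemma~\ref{lem:NoIndex0orN} is unavailable for generic $q$ in the mixed upper/lower case, but your proposed remedy --- inserting $1=ad-q^{-1}bc$ to raise each word of $S_{n-2}(a+d)$ to degree $n$ and then ``verifying'' that the negative contributions are absorbed --- is left entirely unexecuted. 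Each insertion of the determinant relation introduces a genuinely negative term $-q^{-1}bc$, and you give no mechanism for tracking or cancelling these; checking $T_2$ and $T_3$ by hand is not an induction. As written this is a plan to look for a proof, not a proof.

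The idea you are missing is a change of basis, not an algebraic identity. The paper lifts $\rho_n$ to the free algebra $\A\langle X,Y\rangle_n$, so that $\rho_n(A_1\cdots A_k)$ applied to a monomial becomes a sum over ``monomial sequences'' recording, at each stage $i$, the substitution $X\mapsto a_iX$ or $b_iY$ (resp.\ $Y\mapsto b_iX$ or $a_i^{-1}Y$) permitted by the triangular type of $A_i$. It then computes $\Tr\rho_n$ in the basis $\{X^n,Y^n\}\cup\{X^{n-v-2}Y^vXY;\ v=0,\dots,n-2\}$ of $\C[X,Y]^q_n$ (legitimate by Lemma~\ref{lem:TraceInvariant}) while keeping the basis $\{X^{n-v-2}Y^v\}$ for $\C[X,Y]^q_{n-2}$. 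The point is that appending the factor $XY$ and extending every monomial sequence by the substitution $X\mapsto a_iX$, $Y\mapsto a_i^{-1}Y$ --- a choice available for \emph{both} triangular types --- injects the diagonal contributions to $\Tr\rho_{n-2}$ into those to $\Tr\rho_n$ with exactly the same coefficient, since the inserted $a_i$ and $a_i^{-1}$ cancel. Every monomial of $\Tr\rho_{n-2}$ is thereby matched with an identical monomial of $\Tr\rho_n$, and $T_n(\Tr A_1\cdots A_k)$ is the sum of the unmatched, manifestly positive, monomials; no determinant relation and no sign bookkeeping are needed. Your closing observation about the single-type case is correct but does not address the mixed case, which is precisely where the work lies.
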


\begin{proof} The case of $S_n(t)$ is  relatively simple. We computed
$$
S_n \big( \Tr A_1A_2\dots A_k \big ) = \Tr \rho_n ( A_1A_2\dots A_k )
$$
in the course of the proof of Theorem~\ref{thm:MiraculousCancel}. In particular, Equations (\ref{eqn:UpperTriangMatrix}--\ref{eqn:TraceDimN}) show that, with no assumption on $q$, $\Tr \rho_n ( A_1A_2\dots A_k )$ is a sum of positive monomials $+ q^\xi \prod_{i=1}^k a_i^{\alpha_i} b_i^{\beta_i} $. Indeed, it is well-known (and also follows from the Quantum Binomial Formula (\ref{eqn:QuantumBinomialFormula}))  that the quantum binomial coefficients $\QB uv{q^2}$ are polynomials in $q^2$ with nonnegative integer coefficients. 

The proof for $T_n(t)$ is more elaborate. We want to show that, when computing 
$$
T_n \big( \Tr A_1A_2\dots A_k \big ) = \Tr \rho_n ( A_1A_2\dots A_k ) - \Tr \rho_{n-2} ( A_1A_2\dots A_k ),
$$
each monomial of $ \Tr \rho_{n-2} ( A_1A_2\dots A_k )$ cancels out with a monomial of $\Tr \rho_n ( A_1A_2\dots A_k ) $; this can be seen as a weaker form of Lemma~\ref{lem:NoIndex0orN}. For this, we will give a different computation of $ \Tr \rho_{n} ( A_1A_2\dots A_k )$. 

This computation goes back to the principles underlying the Quantum Binomial Formula. Let $\C\langle X,Y \rangle$ be the free algebra generated by the set $\{X,Y\}$. Namely, $\C\langle X,Y \rangle$ consists of all formal polynomials $P(X,Y)$ in noncommutating variables $X$ and $Y$, and these polynomials are multiplied without simplifications. In particular, the quantum plane $\C[X,Y]^q$ is the quotient of $\C \langle X,Y \rangle$ by the ideal generated by $YX-qXY$, which gives a natural projection $\pi \colon \C \langle X,Y \rangle \to \C[X,Y]^q$. 

Similarly, consider $\A\langle X,Y \rangle = \A \otimes \C\langle X,Y \rangle$, and the projection $\Id_\A \otimes \pi$ which we will also denote as $\pi \colon  \A \langle X,Y \rangle \to \A[X,Y]^q$ for short.

Let $\A\langle X,Y \rangle_n$ be the linear subspace of $\A\langle X,Y \rangle $ consisting of all homogeneous polynomials of degree $n$. Namely,  $\A\langle X,Y \rangle _n$ consists of all finite sums
$$
P(X,Y) = \sum_u \alpha_u Z_{u1} Z_{u2} \dots Z_{un}
$$
where  the coefficients $\alpha_u $ are in $ \A$ and where each variable $Z_{uv}$ is equal to $X$ or to $Y$. In particular, $\A\langle X,Y \rangle_n$ is isomorphic to $\A^{2^n}$ as an $\A$--module. For comparison, remember that $\A[X,Y]^q_n$ is isomorphic to $\A^{n+1}$. 

The representation $\rho_n \colon \SLq(\A) \to \End_\A \big( \A[X,Y]^q_n \big)$ lifts to a representation $\widehat \rho_n \colon \SLq(\A) \to \End_\A \big( \A\langle X,Y\rangle_n \big)$ defined by the property that
$$
\widehat \rho_n \big( \begin{smallmatrix}
a&b\\c&d
\end{smallmatrix} \big) \big(P(X,Y) \big) = P(aX+bY, cX + dY )
$$
for every homogeneous polynomial $P(X,Y) \in \A \langle X,Y \rangle$  of degree $n$ in the noncommuting variables $X$ and $Y$ (and with coefficients in $\A$). 

To give a more combinatorial description of this action, note that every element of $\A\langle X,Y\rangle_n$ can be uniquely written as a sum of monomials  $\alpha Z_1Z_2 \dots Z_n$ where $\alpha \in \A$ and  each $Z_u \in \{X,Y\}$. Then, $\widehat \rho_n\big( 
\begin{smallmatrix}
 a&b\\c&d
\end{smallmatrix}
 \big) (\alpha Z_1Z_2 \dots Z_n) \in \A \langle X,Y \rangle_n$ is the sum of all monomials $\alpha' Z_1'Z_2' \dots Z_n'$ obtained from $\alpha Z_1Z_2 \dots Z_n$ by replacing each $Z_u$ with:
\begin{itemize}
 \item[\textbullet] either $aX$ or $bY$, if $Z_u=X$;
 \item[\textbullet]  either $cX$ or $dY$, if $Z_u=Y$,
\end{itemize}
(and pushing all coefficients of $\A$ to the front). 

As a consequence, 
$$
 \widehat \rho_n(A_1A_2 \dots A_k) (\alpha Z_1Z_2 \dots Z_n)
 =  \widehat \rho_n(A_k) \circ  \widehat \rho_n(A_{k-1}) \circ \dots \circ  \widehat \rho_n(A_1) (\alpha Z_1Z_2 \dots Z_n)
$$
can be described as follows. Let $\mathcal M(\alpha Z_1Z_2 \dots Z_n)$ be the set of all  sequences $M =(M_i)_{i=0, 1, \dots, k}$ of monomials $M_i=\alpha_i Z_{i1}Z_{i2} \dots Z_{in} \in \A \langle X,Y \rangle_n$ such that
\begin{enumerate}
 \item $M_0 = \alpha Z_1Z_2 \dots Z_n$;
 \item $M_i$ is obtained from $M_{i-1}$ by replacing each $Z_{(i-1)u}$ with
\begin{itemize}
 \item[\textbullet]   $a_i X$ or $b_iY$ if $Z_{(i-1)u}=X$ and $A_i = \left( 
\begin{smallmatrix}
 a_i & b_i \\ 0& a_i^{-1}
\end{smallmatrix}
 \right)$;
 
  \item[\textbullet]   $a_i^{-1} Y$ if $Z_{(i-1)u}=Y$ and $A_i = \left( 
\begin{smallmatrix}
 a_i & b_i \\ 0& a_i^{-1}
\end{smallmatrix}
 \right)$;
 
  \item[\textbullet]   $a_i X$  if $Z_{(i-1)u}=X$ and $A_i = \left( 
\begin{smallmatrix}
 a_i & 0 \\ b_i& a_i^{-1}
\end{smallmatrix}
 \right)$;
 
  \item[\textbullet]   $b_i X$  or $a_i^{-1} Y$ if $Z_{(i-1)u}=Y$ and $A_i = \left( 
\begin{smallmatrix}
 a_i & 0 \\ b_i& a_i^{-1}
\end{smallmatrix}
 \right)$.

\end{itemize}
\end{enumerate}
Then, as we expand $ \widehat \rho_n(A_1A_2 \dots A_k) (\alpha Z_1Z_2 \dots Z_n)\in \A\langle X,Y\rangle_n$, we see that its  decomposition  into monomials is given by
$$
 \widehat \rho_n(A_1A_2 \dots A_k) (\alpha Z_1Z_2 \dots Z_n) = \sum_{M \in \mathcal M(\alpha Z_1Z_2 \dots Z_n)} M_k
$$
where $M_k$ is the last term of the sequence $M =(M_i)_{i=0, 1, \dots, k}\in  \mathcal M(\alpha Z_1Z_2 \dots Z_n)$. 

As a consequence, if we use the same notation for the monomial $X^{n-u}Y^u=X \dots XY \dots Y \in \A\langle X,Y\rangle_n$ and for its image $X^{n-u}Y^u \in \A[X,Y]^q_n$ under the projection $\pi \colon \A\langle X,Y\rangle_n \to \A[X,Y]^q_n$, 
$$
  \rho_n(A_1A_2 \dots A_k) ( X^{n-u}Y^u ) = \sum_{M \in \mathcal M(X^{n-u}Y^u)} \pi(M_k).
$$

Finally, let $\mathcal M'(X^{n-u}Y^u)$ be the set of monomial sequences $M \in \mathcal M(X^{n-u}Y^u)$ whose contribution  $\pi(M_k) $ belongs to $ \A X^{n-u}Y^u$. For such a monomial sequence $M =(M_i)_{i=0, 1, \dots, k}$, let $\alpha(M_k)\in \A$ be the coefficient such that $\pi(M_k) = \alpha(M_k) X^{n-u}Y^u$. We can then compute the trace of $\rho_n(A_1A_2 \dots A_k)$ by using  the basis $\{ X^{n-u}Y^u; u=0,1,\dots,n\}$ for $ \C[X,Y]^q_n$ and $ \A[X,Y]^q_n = \A \otimes \C[X,Y]^q_n$, which gives
\begin{equation}
\label{eqn:TraceDimNgeneric}
\Tr   \rho_n(A_1A_2 \dots A_k) = \sum _{u=0}^n\  \sum_{M \in \mathcal M'(X^{n-u}Y^u)}  \alpha(M_k). 
\end{equation}

Similarly
\begin{equation}
\label{eqn:TraceDimN-2generic}
\Tr   \rho_{n-2}(A_1A_2 \dots A_k) = \sum _{v=0}^{n-2}\  \sum_{M \in \mathcal M'(X^{n-v-2}Y^v)}  \alpha(M_k). 
\end{equation}

The expression (\ref{eqn:TraceDimNgeneric}) for $\Tr   \rho_n(A_1A_2 \dots A_k)$ was obtained by using the basis $\{ X^{n-u}Y^u; u=0,1,\dots,n\}$ for $ \A[X,Y]^q_n$. For comparison with (\ref{eqn:TraceDimN-2generic}), it is more convenient to use the other basis $\{X^n, Y^n\} \cup \{ X^{n-v-2}Y^{v}XY; v=0,1,\dots,n-2\}$ for $ \A[X,Y]^q_n$. This gives the expression 
\begin{align}
\label{eqn:TraceDimNgenericBis}
\Tr   \rho_n(A_1A_2 \dots A_k) &=  \sum_{M' \in \mathcal M'(X^n)}  \alpha(M_k') +  \sum_{M' \in \mathcal M'(Y^n)}  \alpha(M_k') \\ \notag
&  \qquad\qquad\qquad +\sum _{v=0}^{n-2}\quad \sum_{M' \in \mathcal M'(X^{n-v-2}Y^{v}XY)}  \beta(M_k')
\end{align}
where, for $M' =(M_i')_{i=0, 1, \dots, k} \in \mathcal M'(X^{n-v-2}Y^{v}XY)$, the coefficient $\beta(M_k') \in \A$ is defined by the property that $\pi(M_k') = \beta(M_k') X^{n-v-2}Y^{v}XY$.

Note that, because of the commutativity and $q$--commutativity properties of the quantities $a_i$, $b_i$, $X$, $Y$, all terms $\alpha(M_k)$ and $\beta(M_k')$ in (\ref{eqn:TraceDimN-2generic}--\ref{eqn:TraceDimNgenericBis})  are positive monomials of the form $+ q^\xi \prod_{i=1}^k a_i^{\alpha_i} b_i^{\beta_i} $ with $\xi$, $\alpha_i$, $\beta_i \in \Z$ and $\beta_i\geq 0$. .

We now compare (\ref{eqn:TraceDimN-2generic}) and (\ref{eqn:TraceDimNgenericBis}).  Every monomial sequence  $M =(M_i)_{i=0, 1, \dots, k} \in \mathcal M'(X^{n-v-2}Y^v)$ gives rise to a monomial sequence $M'  \in \mathcal M'(X^{n-v-2}Y^vXY)$ defined by the property that $M_i' = M_iXY$ for every $i$. Indeed, rewriting
$$
M_i' = M_i (a_iX)(a_i^{-1}Y)
$$
shows that $M' =(M_i')_{i=0, 1, \dots, k} $ really satisfies the inductive property defining $\mathcal M'(X^{n-v-2}Y^vXY)$. In addition, when $M'  \in \mathcal M'(X^{n-v-2}Y^vXY)$ is thus associated to $M \in \mathcal M'(X^{n-v-2}Y^v)$, 
$$
\pi(M_k') = \pi(M_kXY) =\pi(M_k)\pi(X)\pi(Y) = \alpha(M_k) X^{n-v-2}Y^vXY 
$$
 so that $\beta(M_k')= \alpha(M_k)$.

 Therefore, when computing 
 $$
T_n \big( \Tr A_1A_2\dots A_k \big ) = \Tr \rho_n ( A_1A_2\dots A_k ) - \Tr \rho_{n-2} ( A_1A_2\dots A_k ),
$$
every monomial $\alpha(M_k)$ occurring in (\ref{eqn:TraceDimN-2generic}) cancels out with a monomial  $\beta(M_k')$ of (\ref{eqn:TraceDimNgenericBis}). It follows that $T_n \big( \Tr A_1A_2\dots A_k \big ) $ is the sum of the remaining coefficients $\alpha(M_k)$ and $\beta(M_k')$ of (\ref{eqn:TraceDimNgenericBis}). We already observed that these monomials are positive, which concludes the proof of Proposition~\ref{prop:TnPositiveSum}. 
\end{proof}
 
 Proposition~\ref{prop:TnPositiveSum} enables us to  precisely determine the number of monomials in $T_n \big( \Tr A_1A_2\dots A_k \big ) $ under the hypothesis that there are no extraneous simplifications. This means that $q$ is transcendental and, since we can always assume that the algebra $\A$ is generated by the entries $a_i$, $b_i$ of the matrices $A_i$, that $\A$ is the algebra defined by the generators $a_i^{\pm 1}$, $b_i$ and by the relations that $b_ia_i = qa_i b_i$ and that $a_i$, $b_i$ commute with $a_j$, $b_j$ whenever $i \neq j$. In other words, $\A$ is the algebra $\bigotimes_{i=1}^k \C[a_i^{\pm1}, b_i]^q$. 
 
 In this case, every element of $\A=\bigotimes_{i=1}^k \C[a_i^{\pm1}, b_i]^q$ has a unique decomposition as a sum of monomials $\xi\prod_{i=1}^k a_i^{\alpha_i} b_i^{\beta_i} $ with  $\xi\in \C$, $\alpha_i\in \Z$, $\beta_i \in \Z$ and $\beta_i\geq 0$. 
  
 \begin{prop}
 \label{prop:TnExactNumberMonomials}
Suppose  that $q$ is transcendental, and that $\A=\bigotimes_{i=1}^k \C[a_i^{\pm1}, b_i]^q$. Let triangular matrices $A_i=  \big( 
\begin{smallmatrix}
 a_i & b_i\\ 0&a_i^{-1}
\end{smallmatrix}\big)$ or 
$\big(\begin{smallmatrix}
 a_i & 0\\ b_i&a_i^{-1}
\end{smallmatrix}
 \big) \in \SLq(\A)$ be given for $i=1$, $2$, \dots, $k$, and consider the positive integer $t_0= \Tr A_1^{(0)}A_2^{(0)}\dots A_k^{(0)}$  where $A_i^{(0)} = \big( 
\begin{smallmatrix}
 1& 1\\ 0&1
\end{smallmatrix}\big)$ if $A_i=  \big( 
\begin{smallmatrix}
 a_i & b_i\\ 0&a_i^{-1}
\end{smallmatrix}\big)$ and $A_i^{(0)}=\big(\begin{smallmatrix}
 1 & 0\\ 1&1
\end{smallmatrix}
 \big)$ if $A_i=\big(\begin{smallmatrix}
 a_i & 0\\ b_i&a_i^{-1}
\end{smallmatrix}
 \big)$. 
Then,  for every $n$, $T_n \big( \Tr A_1A_2\dots A_k \big ) $ is the sum of exactly
 $$\textstyle
T_n(t_0) =\left( \frac{t_0+\sqrt{t_0^2-4}}2 \right)^n + \left( \frac{t_0-\sqrt{t_0^2-4}}2 \right)^n
$$
positive monomials of the form $+ q^\xi \prod_{i=1}^k a_i^{\alpha_i} b_i^{\beta_i} $ with $\xi$, $\alpha_i$, $\beta_i \in \Z$ and $\beta_i\geq0$.
\end{prop}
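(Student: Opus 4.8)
The plan is to reduce the counting problem to the evaluation of a single augmentation homomorphism. Because $q$ is transcendental, I would first regard $q$ as a formal indeterminate and work over $\Z[q,q^{-1}]$, so that distinct powers $q^\xi$ carry no hidden relations and the number of monomials is the same as in the original $\C$--algebra. Over this base ring, consider the ring homomorphism $\epsilon\colon\A\to\Z$ determined by $\epsilon(a_i)=\epsilon(b_i)=1$ for all $i$ and $\epsilon(q)=1$. This is well defined precisely because, under these substitutions, the defining relation $b_ia_i=qa_ib_i$ collapses to the trivial identity $1=1$, and all cross--commutation relations hold automatically in the commutative target. The decisive property of $\epsilon$ is that it sends every positive monomial $+q^\xi\prod_{i=1}^k a_i^{\alpha_i}b_i^{\beta_i}$ to $+1$.

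The crucial input is then Proposition~\ref{prop:TnPositiveSum}, which asserts that $T_n\big(\Tr A_1A_2\dots A_k\big)$ is a sum of \emph{positive} monomials, with no cancellation. Applying $\epsilon$ therefore contributes exactly a $+1$ for each monomial, so that $\epsilon\big(T_n(\Tr A_1A_2\dots A_k)\big)$ equals the number of monomials we wish to count. Positivity is indispensable here: in the presence of negative monomials, $\epsilon$ would return the difference between the numbers of positive and negative terms rather than their total. Moreover, since $\epsilon$ is a ring homomorphism, its value depends only on the element $T_n(\Tr A_1A_2\dots A_k)\in\A$ and not on any particular way of displaying it as a sum of monomials; this is what makes ``the number of monomials'' a well--posed quantity.

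It remains to compute $\epsilon\big(T_n(\Tr A_1A_2\dots A_k)\big)$ in a second way. As $T_n(t)\in\Z[t]$ has integer coefficients and $\epsilon$ is a ring homomorphism, $\epsilon$ commutes with $T_n$ and with the formation of matrix products and traces, so that $\epsilon\big(T_n(\Tr A_1A_2\dots A_k)\big)=T_n\big(\Tr\,\epsilon(A_1)\epsilon(A_2)\dots\epsilon(A_k)\big)$. By construction each $\epsilon(A_i)$ is the integer matrix $\bigl(\begin{smallmatrix}1&1\\0&1\end{smallmatrix}\bigr)$ or $\bigl(\begin{smallmatrix}1&0\\1&1\end{smallmatrix}\bigr)$, namely $A_i^{(0)}$, so that $\Tr\,\epsilon(A_1)\dots\epsilon(A_k)=t_0$ and the count equals $T_n(t_0)$. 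The closed--form expression is then read off from Lemma~\ref{lem:ChebyshevClosedForm}.

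I expect the main difficulty to be conceptual rather than computational, namely ensuring that ``the number of monomials'' is genuinely well defined and genuinely computed by $\epsilon$. This hinges entirely on the no--cancellation property of Proposition~\ref{prop:TnPositiveSum}, and this is where the hypotheses that $q$ be transcendental and that $\A=\bigotimes_{i=1}^k\C[a_i^{\pm1},b_i]^q$ be the generic algebra (with its unique normal forms) are doing the real work. The remaining points — that $\epsilon$ is a well--defined algebra homomorphism and that it is compatible with traces and products of the matrices $A_i$ — are routine verifications.
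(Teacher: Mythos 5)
Your proof is correct and follows essentially the same route as the paper: the paper counts the monomials "by letting $q$ and the $a_i$, $b_i$ tend to $1$," which is exactly your augmentation homomorphism $\epsilon$, and both arguments rest on the positivity of Proposition~\ref{prop:TnPositiveSum} together with Lemma~\ref{lem:ChebyshevClosedForm}. Your formulation via a ring homomorphism to $\Z$ is a slightly more explicit rendering of the paper's specialization argument, but it is not a different proof.
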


\begin{proof}
 We already proved in Proposition~\ref{prop:TnPositiveSum} that $T_n \big( \Tr A_1A_2\dots A_k \big ) $ is a sum of monomials of the type indicated. The only issue  is to count their number. 
 
 Because of the positive signs, the number of these monomials can be computed by letting $q$ and the $a_i$, $b_i$ tend to 1. Under this limiting process, $\Tr A_1A_2\dots A_k $ approaches $t_0$, and the number of monomials in the expansion for $T_n \big( \Tr A_1A_2\dots A_k \big ) $ is therefore  equal to~$T_n(t_0)$. 
 
 The formula $T_n(t_0) =\Big( \frac{t_0+\sqrt{t_0^2-4}}2 \Big)^n + \Big( \frac{t_0-\sqrt{t_0^2-4}}2 \Big)^n$ is provided by Lemma~\ref{lem:ChebyshevClosedForm}. 
\end{proof}

\bibliographystyle{amsalpha}
 \bibliography{Cancellations}

\end{document}